\newtheorem{thm}{Theorem}
\newtheorem{lem}{Lemma}[section]
\newtheorem{pro}[lem]{Proposition}
\newtheorem{rem}{Remark}[section]
\newcommand{\dis}{\displaystyle}
\newcommand{\R}{{\Bbb R}}
\newcommand{\N}{{\Bbb N}}
\newcommand{\pa}{\partial}
\title{\Large\sf Stability of nondegenerate ODE type blowup for the Fujita type heat equation}
\author{Junichi Harada}
\begin{document}
\maketitle
\thispagestyle{empty}
\large

 \begin{abstract}
 The asymptotic bahavior of blowup solutions to the Fujita type heat equation
 $u_t=\Delta u+|u|^{p-1}u$ is studied.
 This equation admits the ODE type blowup given by
 $u(x,t)=(p-1)^\frac{1}{p-1}(T-t)^{-\frac{1}{p-1}}$.
 It is known that
 nondegenerate ODE type blowup is stable if $p\in(1,\frac{n+2}{n-2})$
 due to Fermanian Kammerer-Merle-Zaag (2000).
 This paper extends their result to more general case.
 \end{abstract}

 \noindent
 {\bf Keyword}: semilinear heat equation; type I blowup;
 stability of ODE type blowup;
 \tableofcontents

 \section{Introduction}
 \label{sec_1}
 We are concerned with blowup solutions to the Fujita type heat equation.
 \begin{equation}
 \label{eq_1.1}
 \begin{cases}
 u_t = \Delta u+|u|^{p-1}u
 \qquad
 \text{in } \R^n\times(0,T),\\
 u|_{t=0}=u_0(x),
 \end{cases}
 \end{equation}
 where $n\geq1$ and $p>1$.
 It is known that
 \eqref{eq_1.1} admits a unique local classical solution
 for any bounded continuous initial data $u_0$.
 If there exists $T>0$ such that $\limsup_{t\to T}\|u(t)\|_\infty=\infty$,
 we say that a solution $u(x,t)$ blows up in a finite time $T$.
 A typical blowup solution is given by
 \[
 u(x,t)
 =
 \pm(p-1)^{-\frac{1}{p-1}}
 (T-t)^{-\frac{1}{p-1}}
 \qquad
 (\text{ODE type}).
 \]
 Furthermore
 we call $a\in\R^n$ a blowup point of $u(x,t)$
 if there exists a sequence $\{(a_j,t_j)\}_{j\in\N}\subset\R^n\times(0,T)$
 such that $|u(a_j,t_j)|\to\infty$ as $j\to\infty$.
 To describe the asymptotic behavior of solutions to \eqref{eq_1.1},
 we introduce the similarity variables.
 Let $u(x,t)$ be a blowup solution of \eqref{eq_1.1}
 and put
 \begin{align*}
 w_{a,T}(z,\tau)
 =
 (T-t)^\frac{1}{p-1}
 u(a+z\sqrt{T-t},t)
 \qquad
 \text{with }
 T-t=e^{-\tau},
 \end{align*}
 where $T>0$ is the blowup time
 and $a\in\R^n$ is the blowup point of $u(x,t)$.
 This rescaled function $w_{a,T}(z,\tau)$ satisfies
 \begin{align}
 \label{eq_1.2}
 \pa_\tau
 w_{a,T}
 &=
 \Delta_z
 w_{a,T}
 -
 \tfrac{z}{2}
 \cdot
 \nabla_z
 w_{a,T}
 -
 \tfrac{1}{p-1}
 w_{a,T}
 +
 |w_{a,T}|^{p-1}
 w_{a,T}
 \\
 \nonumber
 &\qquad
 \text{for }
 (z,\tau)\in\R^n\times(-\log T,\infty).
 \end{align}
 The long time behavior of $w_{a,T}(z,\tau)$
 describes the asymptotic behavior of $u(x,t)$ near the blowup point.
 The constant $\pm(p-1)^{-\frac{1}{p-1}}$ give
 a stationary solution of \eqref{eq_1.2},
 which corresponds to the ODE type blowup in the original variables.
 In the pioneering work by Giga-Kohn \cite{Giga-Kohn_1985, Giga-Kohn_1987},
 they proved that
 all blowup solutions satisfy
 \begin{align}
 \label{eq_1.3}
 \lim_{\tau\to\infty}
 w_{a,T}(z,\tau)
 &=
 (p-1)^{-\frac{1}{p-1}}
 \qquad
 \text{locally uniformly on } \R^n
 \end{align}
 for $p\in(1,\frac{n+2}{n-2})$ under some technical assumptions,
 employing the similarity variables
 (see also Giga-Matsui-Sasayama \cite{Giga-Matsui-Sasayama}).
 The convergence \eqref{eq_1.3} is equivalent to
 \begin{align}
 \label{eq_1.4}
 \lim_{t \to T}
 \lim_{|x-a|<K\sqrt{T-t}}
 (T-t)^\frac{1}{p-1}u(x,t)
 &=
 (p-1)^{-\frac{1}{p-1}}
 \\
 \nonumber
 &
 \text{for any fixed }
 K>1.
 \end{align}
 On the other hand,
 it is known that
 there are several blowup solutions whose profile is different from
 \eqref{eq_1.3}
 for the case $p\geq\frac{n+2}{n-2}$.
 In this paper,
 we are interested in the stability of blowup solutions
 that are locally of ODE type
 but have isolated blowup points
 for $p\geq\frac{n+2}{n-2}$.
 To obtain a more precise asymptotic profile of blowup solutions,
 we investigate the dynamics of solutions to the linearized equation
 around $w_{a,T}=(p-1)^{-\frac{1}{p-1}}$. 
 There are two possible asymptotic profiles
 for the solutions to the linearized equation
 (see \cite{Herrero-Velazquez_1D,Velazquez_classification,Filippas-Kohn}).
 \begin{enumerate}[({case} i)]
 \item
 There exists $L\in\{1,2,\cdots,n\}$
 such that
 $\dis\lim_{\tau\to\infty}
 \tau
 \Bigl|
 w_{a,T}(z,\tau)-(p-1)^{-\frac{1}{p-1}}
 +
 \tfrac{(p-1)^{-\frac{1}{p-1}}}{4p\tau}
 \Bigl
 (\sum_{j=1}^Lz_j^2-2n\Bigr)
 \Bigr|=0$
 locally uniformly on $\R^n$.
 \item
 There exists $\gamma>0$ such that
 for any $K>1$ there exists $C(K)>0$ such that
 $\dis\sup_{|z|<K}|w_{a,T}(z,\tau)-(p-1)^{-\frac{1}{p-1}}|<C(K)e^{-\gamma\tau}$.
 \end{enumerate}
 We call nondegenerate ODE type blowup,
 if (case i) holds with $L=n$.
 When a blowup is of nondegenerate ODE type,
 its profile is given by
 \begin{align}
 \label{eq_1.5}
 w_{a,T}(z,\tau)
 \approx
 (p-1)^{-\frac{1}{p-1}}
 -
 \tfrac{(p-1)^{-\frac{1}{p-1}}}{4p\tau}(|z|^2-2n).
 \end{align}
 The solution exhibiting a nondegenerate ODE type blowup
 has only one maximum point near the blowup point at $t\sim T$,
 as in \eqref{eq_1.5}.
 For 1D case,
 the stability of blowup solutions is completely understood
 due to Herrero-Vel\'azquez (Theorem \cite{Herrero-Velazquez_1Dgeneric} p. 385).
 They proved that
 \begin{enumerate}[(rI)]
 \item
 the nondegenerate ODE type blowup is stable under small perturbations
 in the $L^\infty$ norm on the initial data,
 \item
 the degenerate ODE type blowup (case ii) is unstable.
 Precisely let $u(x,t)$ be a blowup solution satisfying (case ii).
 Then
 for any $\epsilon\in(0,1)$,
 there exists a solution $\tilde u(x,t)$
 such that $\|\tilde u_0-u_0\|_\infty<\epsilon$
 and $\tilde u(x,t)$ exhibits nondegenerate ODE type blowup.
 \end{enumerate}
 For a higher dimensional case,
 Fermanian Kammerer-Merle-Zaag in \cite{Kammerer-Merle-Zaag}
 (see also \cite{Merle-Zaag})
 obtained the same stability result as (rI)
 under the condition $p\in(1,\frac{n+2}{n-2})$.
 The goal of this paper is
 to extend their result to a general case.
 \begin{thm}
 \label{thm_1}
 Let $n\in\N$ and $p>1$.
 The nondegenerate ODE type blowup is stable in the sense of {\rm(rI)}.
 \end{thm}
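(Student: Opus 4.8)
The plan is to run the entire argument in the similarity variables of \eqref{eq_1.2}, never leaving a small $L^\infty$-neighbourhood of the constant $\kappa:=(p-1)^{-\frac1{p-1}}$; this is precisely what makes the value of $p$ irrelevant. Writing $v:=w-\kappa$ and using that $s\mapsto|s|^{p-1}s$ is smooth near $\kappa>0$, equation \eqref{eq_1.2} becomes
\[
\partial_\tau v=\mathcal Lv+\tfrac{p}{2\kappa}\,v^2+O(|v|^3),\qquad \mathcal L:=\Delta-\tfrac z2\cdot\nabla+1,
\]
valid while $\|v(\tau)\|_\infty$ is small. The operator $\mathcal L$ is self-adjoint in the weighted space $L^2_\rho$, $\rho(z)=e^{-|z|^2/4}$, with spectrum $\{1-\tfrac k2:k=0,1,2,\dots\}$: the simple eigenvalue $1$ (eigenfunction $1$), the eigenvalue $\tfrac12$ (eigenfunctions $z_1,\dots,z_n$), a kernel spanned by $h_{ii}:=z_i^2-2$ and $h_{ij}:=z_iz_j$ ($i<j$), and the rest $\le-\tfrac12$. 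The only place I would depart from \cite{Kammerer-Merle-Zaag} is that the above reduction uses nothing but local smoothness of the nonlinearity — so no Giga--Kohn global bound, no monotone energy and no subcritical compactness are needed.

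\emph{Step 1 (reduction to openness).} It is enough to prove that the set of $u_0\in L^\infty$ giving a nondegenerate ODE type blowup is open. Fix such a $u_0$ with blowup data $(a,T)$ and pick $\tau_1$ large; by \eqref{eq_1.5}, $w_{a,T}(\cdot,\tau_1)$ is $\epsilon_1$-close — with $\epsilon_1\to0$ as $\tau_1\to\infty$ — to $\kappa-\tfrac{\kappa}{4p\tau_1}(|z|^2-2n)$ in $L^2_\rho\cap L^\infty$. For $\tilde u_0$ with $\|\tilde u_0-u_0\|_\infty$ small (depending on $\tau_1$), both solutions exist and stay uniformly bounded on $[0,t_1]$ with $t_1:=T-e^{-\tau_1}$, so by continuous dependence and interior parabolic regularity $\tilde u(\cdot,t_1)$ is $O(\|\tilde u_0-u_0\|_\infty)$-close to $u(\cdot,t_1)$ in $L^\infty(\R^n)\cap C^2_{\mathrm{loc}}$; passing to similarity variables produces, at time $\tau_1$, data of the above type up to an error $\epsilon_1+O(\|\tilde u_0-u_0\|_\infty)$.

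\emph{Step 2 (modulation, trapping, conclusion).} For centres $(a^*,T^*)$ in a small ball about $(a,T)$ I would write $\tilde w_{a^*,T^*}=v_++v_1+v_0+v_-$ along the four spectral pieces. To leading order $T^*$ moves the eigenvalue-$1$ component $v_+$ and $a^*$ moves the eigenvalue-$\tfrac12$ component $v_1$, so by a standard modulation argument — or, allowing $v_+,v_1$ small, by a Brouwer-degree argument on $(a^*,T^*)$ — one fixes these two parameters so that $v_+\equiv v_1\equiv0$ (resp.\ stays controlled) on $[\tau_1,\infty)$, with $(a^*,T^*)\to(\tilde a,\tilde T)$ the true blowup data of $\tilde u$, continuous in $\tilde u_0$ and converging to $(a,T)$. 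With the unstable modes removed one runs a bootstrap on $[\tau_1,\infty)$ of the form $\|v(\tau)\|_\infty\le\eta$, $\|v_-(\tau)\|_{L^2_\rho}\le C\tau^{-2}$ and $v_0(\tau)=-\tfrac{\kappa}{4p\tau}(|z|^2-2n)+O(\tau^{-1-\sigma})$ for some $\sigma\in(0,1)$: the $v_-$ bound from Duhamel, the decay $e^{-(\tau-\tau_1)/2}$ of $e^{(\tau-\tau_1)\mathcal L}$ on the stable subspace and the quadratic nonlinearity; the $v_0$ bound from the projected centre-manifold system $\dot a_{ij}=\tfrac{p}{2\kappa}\|h_{ij}\|_{L^2_\rho}^{-2}\langle v_0^2,h_{ij}\rangle+O(\tau^{-3})$, whose solution with all $a_{ij}$ $(i<j)$ zero and all $a_{ii}$ equal is exactly $a_{ii}(\tau)=-\tfrac{\kappa}{4p\tau}$ and, after the rescaling $a_{ij}=b_{ij}/\tau$, a locally asymptotically stable fixed point of the resulting autonomous system — this is the selection mechanism behind \eqref{eq_1.5}, cf.\ \cite{Filippas-Kohn,Velazquez_classification,Herrero-Velazquez_1D}. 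A continuation argument then forces the bootstrap to hold on all of $[\tau_1,\infty)$, whence $\tilde w_{\tilde a,\tilde T}(z,\tau)=\kappa-\tfrac{\kappa}{4p\tau}(|z|^2-2n)+o(\tau^{-1})$ locally uniformly; i.e.\ $\tilde u$ has a nondegenerate ODE type blowup, which is the desired openness and hence Theorem \ref{thm_1}.

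\emph{Main obstacle.} The two genuinely hard points are: (i) turning the mere $L^\infty$-closeness of the data into the weighted-$L^2$ functional framework needed for the spectral decomposition — done via parabolic regularity and the boundedness intrinsic to type I scaling; and, above all, (ii) closing the centre-manifold estimate for $v_0$ uniformly on the infinite interval $[\tau_1,\infty)$, where the available margin is only polynomial in $\tau$, so that one must pin the leading coefficient $-\tfrac{\kappa}{4p\tau}$ down to $o(\tau^{-1})$ while simultaneously solving — by a finite-dimensional topological argument — for the parameters $(\tilde a,\tilde T)$ that keep the two unstable modes switched off.
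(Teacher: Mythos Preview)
Your outline is essentially the Fermanian~Kammerer--Merle--Zaag scheme: modulate on $(a^*,T^*)$ to kill the two unstable directions, bootstrap the stable and neutral parts in $L^2_\rho$, and close an $L^\infty$ smallness of $v=w-\kappa$ so that the Taylor expansion $N(v)=\tfrac{p}{2\kappa}v^2+O(|v|^3)$ is valid. The paper takes a deliberately different route (see Remark~\ref{Rem2.2}): it modulates only on the spatial shift $\xi(\tau)$ to kill the $H_{1,J}$ components, and controls the remaining unstable $H_0$ component not by a topological argument but by a dichotomy --- if $\langle w-\kappa,H_0\rangle_\rho$ becomes too positive then Proposition~\ref{proposition_4.2} forces blowup strictly before $T(u)$, while if it becomes too negative then Proposition~\ref{proposition_4.4} forces $w<\kappa$ at some later time, hence no blowup at all; both contradict the definition of $T(u)$.

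The genuine gap in your proposal is the $L^\infty$ bootstrap $\|v(\tau)\|_\infty\le\eta$. This assumption is \emph{false already at the initial time} $\tau_1$: for the reference solution one has (Proposition~\ref{proposition_B.1}\,(s2)--(s3)) $0\lesssim\Phi(z,\tau_1)\le\kappa$ for $|z|$ large, so $v(z,\tau_1)=\Phi(z,\tau_1)-\kappa$ is of size $-\kappa$ in the far field, and the same holds for the perturbed solution. Consequently the cubic remainder $O(|v|^3)$ is not globally small and your Duhamel/centre-manifold estimates for $v_0,v_-$ in $L^2_\rho$ cannot be justified as written. What replaces $L^\infty$ smallness in the paper is the two-sided barrier $-e^{-\frac{3\tau}{4(p-1)}}<w<\kappa+\tau^{-1/2}$ together with $w<\kappa$ for $|z|>2\sqrt n$ (items (K5)--(K6) of Proposition~\ref{proposition_4.3}), obtained by comparison arguments; these give only $|N(W)|\le CW^2$ (Lemma~\ref{lemma_A.3}), and the sharper cubic error is recovered \emph{only} on a growing ball $|z|<4R_1$ via the local parabolic estimate (L7). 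A second, related issue: you invoke ``the boundedness intrinsic to type~I scaling'' to pass from $L^\infty$ closeness of data to the weighted framework, but for $p\ge\frac{n+2}{n-2}$ the type~I bound for the \emph{perturbed} solution is not known a~priori --- it is part of the conclusion (Theorem~\ref{thm_2}\,({\sf ii})) and is produced precisely by the comparison/dichotomy machinery above.
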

 In Section \ref{sec_2},
 we will give full statement of this theorem
 after preparing some notations (see Theorem \ref{thm_2})
 and some remarks on it.

 \section{Notations and Main result}
 \label{sec_2}
 Let $u(x,t)$ be a classical solution of $u_t=\Delta u+|u|^{p-1}u$
 and let $T$ be its blowup time.
 As in Introduction,
 we define a rescaled function $w(z,\tau)$ by
 \begin{align}
 \label{eq_2.1}
 w(z,\tau)
 =
 (T-t)^{\frac{1}{p-1}}
 u(z\sqrt{T-t},t)
 \qquad
 \text{with }
 T-t=e^{-\tau}.
 \end{align}
 The new function $w(z,\tau)$ satisfies
 \begin{align}
 \label{eq_2.2}
 w_\tau
 =
 \Delta_z
 w
 -
 \tfrac{z}{2}
 &\cdot
 \nabla_z
 w
 -
 \tfrac{w}{p-1}
 +
 |w|^{p-1}
 w
 \end{align}
 for
 $(z,\tau)\in\R^n\times(-\log T,\infty)$.
 For simplicity,
 we put
 \begin{align}
 \label{eq_2.3}
 \kappa
 =
 (p-1)^{-\frac{1}{p-1}}
 \end{align}
 The constant $\kappa$ gives a stationary solution of \eqref{eq_2.2},
 which corresponds to the ODE type blowup $u(x,t)=\kappa(T-t)^{-\frac{1}{p-1}}$
 in the original valuables $x,t$.
 It is convenient to introduce the weighted Lebesgue spaces defined by
 \begin{align*}
 L_\rho^2&(\R^n)
 =
 \{
 w(z)\in L_{\text{loc}}^1(\R^n);\
 \|w\|_{L_\rho^2(\R^n)}<\infty
 \}
 \quad
 \text{with }
 \\
 &
 \|w\|_{L_\rho^2(\R^n)}
 =
 \left(
 \int_{\R^n}|w(z)|^2\rho(z)dz
 \right)^\frac{1}{2}
 \quad
 \text{and }
 \quad
 \rho(z)
 =
 e^{-\frac{|z|^2}{4}}.
 \end{align*}
 This space $L_\rho^2(\R^n)$ has an inner product given by
 \[
 \langle w_1,w_2 \rangle_\rho
 =
 \int_{\R^n}
 w_1(z)
 w_2(z)
 \rho(z)
 dz
 \quad
 \text{for }
 w_1,w_2
 \in
 L_\rho^2(\R^n).
 \]
 A wighted Sobolev space is defined in the same way.
 \begin{align*}
 H_\rho^1&(\R^n)
 =
 \{
 w(z),\nabla_zw(z)\in L_\rho^2(\R^n)
 \}.
 \end{align*}
 The inner product on $H_\rho^1(\R^n)$ is defined by
 \begin{align*}
 \langle
 w_1,w_2
 \rangle_{H_\rho^1(\R^n)}
 =
 \langle w_1,w_2\rangle_\rho
 +
 \sum_{j=1}^n
 \langle \pa_{z_j}w_1,\pa_{z_j}w_2\rangle_\rho
 \qquad
 \text{for }
 w_1,w_2\in H_\rho^1(\R^n).
 \end{align*}
 To investigate the dynamics near $w=\kappa$,
 we consider a linearized equation for $W(z,\tau)=w(z,\tau)-\kappa$.
 \begin{align}
 \label{eq_2.4}
 W_\tau
 =
 \Delta_z
 W
 -
 \tfrac{z}{2}
 \cdot
 \nabla_z
 W
 +
 W
 +
 N,
 \end{align}
 here
 $N=|\kappa+W|^{p-1}(\kappa+W)-\kappa^p-p\kappa^{p-1}W$.
 For simplicity,
 we put
 \begin{align}
 \label{eq_2.5}
 A_z
 =
 \Delta_z
 -
 \tfrac{z}{2}
 \cdot
 \nabla_z.
 \end{align}
 Consider the eigenvalue problem
 \begin{align}
 \label{eq_2.6}
 -A_zH=\mu H
 \qquad
 \text{in }
 L_\rho^2(\R^n).
 \end{align}
 We collect spectrum properties of $A_z$.
 \begin{itemize}
 \item
 The $j$-th eigenvalu of \eqref{eq_2.6} is given by
 \[
 \mu_j=\tfrac{j}{2}
 \qquad
 (j=0,1,2,\cdots).
 \]
 \item
 We denote by $V_j$ the eigenspace associated with $\mu_j$ ($j=0,1,2,\cdots$).
 \item
 The eigenspace $V_0$ associated with $\mu_0=0$ is given by
 \begin{align*}
 V_0
 =
 \text{span}\,\{H_0(z)\}
 \qquad
 (\text{dim}\,V_0=1),
 \end{align*}
 here
 \begin{align*}
 H_0(z)
 =
 {\sf k}_0^n
 \qquad
 ({\sf k}_0=(4\pi)^{-\frac{1}{4}}).
 \end{align*}
 \item
 The eigenspace $V_1$ associated with $\mu_1=\frac{1}{2}$ is given by
 \begin{align*}
 V_1
 =
 \text{span}\,\{H_{1,1}(z),H_{1,2}(z),\cdots,H_{1,n}(z)\}
 \qquad
 (\text{dim}\,V_1=n),
 \end{align*}
 here
 \begin{align*}
 H_{1,J}(z)
 =
 \tfrac{{\sf k}_0^n}{\sqrt{2}}
 z_J.
 \end{align*}

 \item
 The eigenspace $V_2$ associated with $\mu_1=1$ is given by
 \begin{align*}
 V_2
 =
 \text{span}\,\{H_{2,IJ}(z);I\leq J\}
 \qquad
 (\text{dim}\,V_2=\tfrac{n(n+1)}{2}),
 \end{align*}
 here
 \begin{align*}
 H_{2,JJ}(z)
 &=
 \tfrac{{\sf k}_0^n}{2\sqrt{2}}(z_I^2-2),
 \\
 H_{2,IJ}(z)
 &=
 \tfrac{{\sf k}_0^n}{2}z_Iz_J
 \qquad \text{if } I<J.
 \end{align*}
 All eigenfunctions
 $H_{0}(z)$, $H_{1,J}(z)$ and $H_{2,IJ}(z)$
 are normalized as $\|\cdot\|_{L_\rho^2(\R^n)}=1$.
 \end{itemize}
 To state our theorem,
 we define
 \begin{align}
 \label{eq_2.7}
 c_p
 =
 \tfrac{\sqrt{2}p{\sf k}_0^n}{\kappa}.
 \end{align}
 \begin{thm}\label{thm_2}
 Let $n\in\N$ and $p>1$.
 Let $\varphi(x,t)$ be a classical solution of
 \begin{align*}
 \begin{cases}
 \varphi_t=\Delta \varphi+|\varphi|^{p-1}\varphi
 \qquad
 \text{\rm for }
 (x,t)\in\R^n\times(0,T),
 \\
 \varphi(x,0)
 =
 \varphi_0\in C(\R^n)\cap L^\infty(\R^n).
 \end{cases}
 \end{align*}
 Assume that
 $\varphi(x,t)$ satisfies
 \begin{enumerate}[\rm ({\rm a}1)]
 \setlength{\leftskip}{5mm}
 \item
 $\varphi(x,t)$ blows up in a finite time $t=T$,
 \item
 $\dis\sup_{t\in(0,T)}\|\varphi(x,t)\|_{L^\infty(|x|>r)}<\infty$ \
 for any fixed $r>0$,
 \item
 there exists $M>0$ such that
 $\varphi(x,t)>-M$ \
 for $x\in\R^n$ and $t\in(0,T)$,
 \item
 $\dis\sup_{t\in(0,T)}(T-t)^\frac{1}{p-1}\|\varphi(x,t)\|_{L_x^\infty(\R^n)}<\infty$,
 \item
 a rescaled function
 $\Phi(z,\tau)=e^{-\frac{\tau}{p-1}}\varphi(e^{-\frac{\tau}{2}}z,T-e^{-\tau})$
 satisfies
 $\dis\lim_{\tau\to\infty}\|\Phi(z,\tau)-\kappa\|_{L_\rho^2(\R^n)}=0$
 \quad
 {\rm(}$\kappa=(p-1)^{-\frac{1}{p-1}}${\rm)},
 \item
 $\dis
 \lim_{\tau\to\infty}
 \tau
 \|
 \Phi(z,\tau)
 -
 \kappa
 +
 \tfrac{1}{c_p\tau}
 \sum_{J=1}^n
 H_{2,JJ}(z)
 \|_{H_\rho^1(\R^n)}
 =0$.
 \end{enumerate}
 Then
 there exist $\delta_1\in(0,1)$ and $K_1>0$ depending only on $\varphi(x,t)$
 such that
 if $\|u_0-\varphi_0\|_{L^\infty(\R^n)}<\delta_1$,
 the corresponding solution $u(x,t)$ of \eqref{eq_1.1} satisfies
 \begin{enumerate}[{\rm (}\sf i{\rm)}]
 \setlength{\leftskip}{5mm}
 \item
 $u(x,t)$ blows up in a finite time $t=T(u)$,
 \item
 $\dis\sup_{t\in(0,T)}(T(u)-t)^\frac{1}{p-1}
 \|u(x,t)\|_{L_x^\infty(\R^n)}<K_1$,
 \item
 there exists a unique $\xi(u)\in\R^n$
 such that
 a rescaled function
 $w(z,\tau)=e^{-\frac{\tau}{p-1}}u(\xi(u)+e^{-\frac{\tau}{2}}z,$ $T(u)-e^{-\tau})$
 satisfies
 $\dis\lim_{\tau\to\infty}\|w(z,\tau)-\kappa\|_{L_\rho^2(\R^n)}=0$,
 \item
 $\dis
 \lim_{\tau\to\infty}
 \tau
 \|
 w(z,\tau)
 -
 \kappa
 +
 \tfrac{1}{c_p\tau}
 \sum_{J=1}^n
 H_{2,JJ}(z)
 \|_{H_\rho^1(\R^n)}
 =0$.
 \end{enumerate}
 \end{thm}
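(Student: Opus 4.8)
The plan is to study the rescaled solution $w(z,\tau)$ near the stationary state $\kappa$ via the linearized equation \eqref{eq_2.4}, exploiting the spectral decomposition of $A_z$ on $L_\rho^2(\R^n)$. Writing $W = w - \kappa$ and decomposing $W = W_0 + W_1 + W_2 + W_-$ along the eigenspaces $V_0$, $V_1$, $V_2$ and the negative part (eigenvalues $\mu_j \le -1/2$, i.e.\ $j \ge 3$, relative to the operator $A_z + I$ appearing in \eqref{eq_2.4}), the modes $W_0$ and $W_1$ are growing (eigenvalues $1$ and $1/2$ for $A_z+I$), $W_2$ is neutral (eigenvalue $0$), and $W_-$ is exponentially decaying. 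The known convergence \textbf{(a5)}--\textbf{(a6)} for $\varphi$ tells us that for the reference solution the unstable modes are controlled, the neutral mode has the precise logarithmic rate $-\tfrac{1}{c_p\tau}\sum_J H_{2,JJ}$, and $W_-$ is $o(1/\tau)$. The goal is to show these features persist under an $L^\infty$-perturbation of size $\delta_1$ of the initial data, after an appropriate modulation (a time shift $T \to T(u)$ and a space translation $\xi(u)$) that kills the instabilities.

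First I would set up a continuity/shooting argument on a ``trapping region'' in the $(W_0,W_1,W_2,W_-)$ coordinates: for the perturbed solution, introduce parameters — the blowup time $T(u)$ absorbs the $W_0$ (zeroth) mode, the center $\xi(u)$ absorbs the $n$-dimensional $W_1$ mode — and show by a topological (degree or Brouwer-type) argument that these $n+1$ parameters can be chosen so that the solution enters and stays in a tube of radius $\sim\eta(\tau)$ shrinking like $o(1/\tau)$ around the profile $\kappa - \tfrac{1}{c_p\tau}\sum_J H_{2,JJ}$. The neutral mode $W_2$ requires the delicate ODE analysis: projecting \eqref{eq_2.4} onto $V_2$ and computing the quadratic contribution of $N$ (which is $\approx \tfrac{p}{2\kappa}W^2$ to leading order), one derives that $\alpha_{JJ}(\tau) := \langle W, H_{2,JJ}\rangle_\rho$ satisfies $\dot\alpha_{JJ} \approx -c\,\alpha_{JJ}^2 + (\text{lower order})$ type dynamics whose only solutions going to zero do so like $-\tfrac{1}{c_p\tau}$, forcing rate \textbf{(iv)}; the off-diagonal $\alpha_{IJ}$ and the trace-free part must be shown to be higher order. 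Standard parabolic smoothing and the boundedness assumptions \textbf{(a2)}--\textbf{(a4)} (carried over to $u$ via $\delta_1$ small and finite-time continuous dependence, plus a no-blowup-outside-a-compact-set argument) give the $L^\infty$ control \textbf{(ii)} and the compatibility needed to run the $L_\rho^2$/$H_\rho^1$ estimates globally in $\tau$.

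Concretely, the steps in order: (1) local well-posedness and continuous dependence on $[0,T-\epsilon]$ gives that $u$ stays close to $\varphi$ up to near the blowup time, and since $\varphi$ has an isolated blowup point, so does $u$ (via \textbf{(a2)} and an energy/comparison argument), establishing \textbf{(i)} and letting us define candidate $T(u)$; (2) pass to similarity variables centered at the (a priori unknown) blowup point and set up the modulation equations for $\xi(\tau)$ forcing $W_1 \equiv 0$; (3) prove the a priori estimates showing the trapping region is invariant — this is where one multiplies \eqref{eq_2.4} by $W\rho$, $-A_z W \cdot \rho$, etc., uses the spectral gap on $W_-$, bounds $N$ by $C(|W|^2 + |W|^p)$ near $W=0$ and uses $\|W\|_\infty$ smallness from \textbf{(a4)}/\eqref{eq_2.1}, and extracts the scalar ODE for the $V_2$-projection; (4) run the topological argument to select $T(u)$ and $\xi(u)$, obtaining \textbf{(iii)}; (5) refine the $V_2$-mode ODE to pin down the exact rate and conclude \textbf{(iv)}, and bootstrap the convergence from $L_\rho^2$ to $H_\rho^1$.

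The main obstacle I expect is closing the argument uniformly for all $p>1$, in particular when $p \ge \tfrac{n+2}{n-2}$: Fermanian Kammerer--Merle--Zaag relied on $p < \tfrac{n+2}{n-2}$ for the Giga--Kohn a priori bounds and for Sobolev-type control of the nonlinearity $N$ in the energy estimates. Here assumptions \textbf{(a2)}--\textbf{(a5)} on $\varphi$ are engineered to replace those inputs, but transferring them to the perturbed solution $u$ — showing $u$ does not develop a \emph{second} blowup point or an unexpected profile elsewhere, and that the $L^\infty$-type I bound \textbf{(ii)} is not lost — requires care, since $\delta_1$-closeness of initial data does not immediately survive to $t \to T$. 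The resolution should come from combining the local-in-space smallness (away from the blowup point $u$ and $\varphi$ stay uniformly close and bounded) with the similarity-variable trapping estimate near the blowup point, so that type I blowup with the ODE profile is the only dynamically accessible outcome inside the trapping tube; handling the supercritical nonlinearity in the $H_\rho^1$ estimate is then a matter of using the $L^\infty$-smallness of $W$ rather than Sobolev embedding.
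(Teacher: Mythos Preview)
Your spectral decomposition, modulation of the $V_1$-modes via a moving center $\xi(\tau)$, and the neutral-mode ODE for the $V_2$-coefficients are all present in the paper's argument, and your identification of the main obstacle (no Giga--Kohn bounds for $p\ge\frac{n+2}{n-2}$, so the $L^\infty$ control and the nonlinear estimates must come from elsewhere) is correct. However, your step (4) --- ``run the topological argument to select $T(u)$ and $\xi(u)$'' --- is precisely where the paper departs from the Fermanian Kammerer--Merle--Zaag strategy, and this is not a cosmetic difference. In a stability statement the blowup time $T(u)$ and the blowup point are \emph{determined} by $u$, not free parameters to be tuned by a Brouwer-type argument; the question is how to show that with these forced values the unstable $H_0$-mode $b_0(\tau)=\langle W,H_0\rangle_\rho$ stays small. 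Your proposal does not supply a mechanism for this beyond alluding to the classical shooting framework, and it is exactly here that the subcritical arguments used the a priori bounds you no longer have.

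The paper's replacement mechanism is a two-sided contradiction argument on $b_0$ (Propositions~\ref{proposition_4.2} and~\ref{proposition_4.4}), which has no topological content. If $b_0(\tau)$ ever rises above a threshold of order $e^{-c\tau}$, a Jensen-type convexity argument on the equation for $\langle W,H_0\rangle_\rho$ forces $\|w(\cdot,\tau)\|_\infty\to\infty$ in finite $\tau$, contradicting that $w$ is defined for all $\tau$ (since we rescale with the true $T(u)$). If $b_0(\tau)$ ever drops below $-\frac{\nu_0^2}{pH_0\tau}$, one shows (Proposition~\ref{proposition_4.4}) that shortly afterward $w(z,\tau)<\kappa$ for \emph{all} $z$, whence $w\to0$ and $u$ does not blow up at $T(u)$, again a contradiction. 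This pins $b_0$ in a narrow corridor with no parameter selection. The $V_1$-modes are killed not by choosing $\xi(u)$ once but by a \emph{time-dependent} modulation $\xi(\tau)$ obtained from the implicit function theorem (Lemma~\ref{lemma_4.1}); the fixed blowup point $\xi(u)=\xi_\infty$ is only recovered at the end from the ODE $\dot\xi-\tfrac{\xi}{2}={\bf c}(\tau)$ with $|{\bf c}(\tau)|\lesssim\tau^{-1}$. What this buys over your plan is that the whole bootstrap (Proposition~\ref{proposition_4.3}) runs under the sole qualitative inputs (a2)--(a6), with the $L^\infty$ control coming from the pointwise barrier $w<\kappa$ for $|z|>2\sqrt{n}$ propagated by comparison --- no Sobolev embedding, no energy identity, and hence no restriction on $p$.
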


 \begin{rem}\label{Rem2.1}
 The blowup is classified into two cases in terms of its blowup rate.
 \begin{align*}
 \sup_{t\in(0,T)}
 (T-t)^\frac{1}{p-1}
 \|u(x,t)\|_{L_x^\infty(\R^n)}
 <
 \infty
 \qquad
 \text{\rm(type I)},
 \\
 \sup_{t\in(0,T)}
 (T-t)^\frac{1}{p-1}
 \|u(x,t)\|_{L_x^\infty(\R^n)}
 =
 \infty
 \qquad
 \text{\rm(type II)}.
 \end{align*}
 The most famous sufficient conditions for type I blowup
 go back to Giga-Kohn {\rm\cite{Giga-Kohn_1985, Giga-Kohn_1987}}.
 They proved that
 all blowup solutions are type I if
 \begin{enumerate}[{\rm(gi)}]
 \item
 $p\in(1,\frac{n+2}{n-2})$ and $u_0(x)\geq0$,
 or
 \item
 $p\in(1,\frac{3n+8}{3n-4})$.
 \end{enumerate}
 Their result was extended to the case $p\in(1,\frac{n+2}{n-2})$
 by Giga-Matsui-Sasayama {\rm\cite{Giga-Matsui-Sasayama}}
 {\rm(}see also Quittner {\rm\cite{Quittner}}{\rm)}.
 For the case $p\geq\frac{n+2}{n-2}$,
 there are several significant works
 that provide criteria for type I blowup.
 They are listed below.
 \begin{enumerate}[\rm(f1)]
 \item
 Let $p\in(\frac{n+2}{n-2},1+\frac{4}{n-4-2\sqrt{n-1}})$.
 If $u_0(x)\in L^\infty(\R^n)\cap H^1(\R^n)$ is radially symmetric,
 then any blowup is type I
 {\rm(Theorem 3.9 p. 1027 in Matano-Merle {\rm\cite{Matano-Merle_2009}})}.

 \item
 Let $p\in(\frac{n+2}{n-2},\infty)$.
 For any radially symmetric function $v(x)\geq0$,
 consider an initial data of the form $u_0(x)=\lambda v(x)$ $(\lambda>0)$.
 There exist finitely many values $0<\lambda_1<\lambda_2<\cdots<\lambda_k$
 {\rm(}depend on $v(x)${\rm)}
 such that
 the solution $u^{(\lambda)}(x,t)$ starting from $u_0(x)=\lambda v(x)$
 exhibits type I blowup for any
 $\lambda\in(\lambda_1,\infty)\setminus\{\lambda_2,\lambda_3,\cdots,\lambda_k\}$
 {\rm(Theorem 1.2 p. 721 in Matano-Merle {\rm\cite{Matano-Merle_2011}})}.

 \item
 \label{f3}
 Let $p=\frac{n+2}{n-2}$.
 If $u_0(x)$ is nonnegative, radially symmetric and $\frac{\pa u_0}{\pa r}\leq0$,
 then any blowup is type I
 {\rm(Section 6 p. 2972 - p. 2975 in
 Filippas-Herrero-Vel\'azquez \cite{Filippas-Herrero-Velazquez})}.

 \item
 Let $p=\frac{n+2}{n-2}$.
 The same result as {\rm (f\ref{f3})} holds true 
 without the assumption $\frac{\pa u_0}{\pa r}\leq0$
 {\rm(Theorem 1.7 p. 1498 in Matano-Merle {\rm\cite{Matano-Merle_2004}})}.

 \item
 Let $p=\frac{n+2}{n-2}$.
 The set of initial data $u_0(x)$
 such that the corresponding solution $u(x,t)$ blows up in a finite time
 with type I is open in $C(\R^n)\cap L^\infty(\R^n)$
 {\rm(Theorem 1.1 p. 66 in
 Collot-Merle-Rapha\"el \cite{Collot-Merle-Raphael_Stability})}.

 \item
 Let $p=\frac{n+2}{n-2}$, $n\geq7$ and put
 ${\sf Q}(x)=(1+\frac{|x|^2}{n(n-2)})^{-\frac{n-2}{2}}$.
 There exists $\eta\in(0,1)$ such that
 if the initial data $u_0(x)\in \dot H^1(\R^n)$ satisfies
 $\|u_0(x)-{\sf Q}(x)\|_{\dot H^1(\R^n)}<\eta$,
 then
 if the solution $u(x,t)$ blows up in a finite time,
 the blowup is type I
 {\rm( Theorem 1.1 p. 217 in Collot-Merle-Rapha\"el
 \cite{Collot-Merle-Raphael_Dynamics})}.

 \item
 Let $p=\frac{n+2}{n-2}$, $n\geq7$.
 If $u_0(x)$ is nonnegative,
 any blowup is type I.
 \end{enumerate}
 Out result provides a new criterion for type I blowup
 {\rm(}see Theorem {\rm\ref{thm_2}} {\rm({\sf ii})}{\rm)}.
 \end{rem}

 \begin{rem}\label{Rem2.2}
 Our proof is completely different from
 Herrero-Vel\'azquez {\rm\cite{Herrero-Velazquez_1Dgeneric}}
 and
 Fermanian Kammerer-Merle-Zaag {\rm\cite{Kammerer-Merle-Zaag}}.
 Our main idea of the proof
 is to trace the maximum point of the solution $u(x,t)$ for all time $t\in(0,T(u))$.
 Let $u(x,t)$ be a blowup solution and
 let $w(z,\tau)$ be its rescaled function defined by
 \eqref{eq_2.1}.
 We will see that there exists a unique $C^1$ function
 $\xi(\tau)$ on $\tau\in(-\log T(u),\infty)$ such that
 \begin{align}
 \label{eq_2.8}
 \langle w(z+\xi(\tau),\tau)&-\kappa,H_{1,J}(z)\rangle_{L_\rho^2(\R_z^n)}=0
 \\
 \nonumber
 &
 \text{\rm for } \tau\in(-\log T(u),\infty)
 \text{ \rm and all } J.
 \end{align}
 We expect that
 $v(z,\tau)=w(z+\xi(\tau),\tau)$ satisfies
 \begin{align*}
 \lim_{\tau\to\infty}
 v(z,\tau)
 =
 \kappa
 \qquad
 \text{in }
 L_\rho^2(\R_{z}^n).
 \end{align*}
 Put
 $V(z,\tau)=v(z,\tau)-\kappa$.
 Then
 $V(z,\tau)$ satisfies
 \begin{align}
 \label{eq_2.9}
 V_\tau
 =
 A_{z}V
 +
 V
 +
 (\tfrac{d\xi}{d\tau}-\tfrac{\xi}{2})
 \cdot
 \nabla_{z}V
 +
 N(V).
 \end{align}
 We can verify that
 the contribution of
 the last two terms\,{\rm:}
 $(\tfrac{d\xi}{d\tau}-\tfrac{\xi}{2})
 \cdot
 \nabla_{z}V$
 and
 $N(V)$
 on the right hand side are negligible in \eqref{eq_2.9}.
 Therefore
 $V(z,\tau)$ is governed by the linear equation\,{\rm:}
 $ V_\tau
 =
 A_{z}V
 +
 V$
 for $\tau\in(-\log T(u),\infty)$.
 We recall that $(A_z+1)$ has $(n+1)$ unstable modes
 given by
 $H_0(z),H_{1,1}(z),H_{1,2}(z),\cdots,H_{1,n}(z)$.
 From the choice of $\xi(\tau)$ {\rm(see \eqref{eq_2.8})},
 it is sufficient to investigate the behavior of $V(z,\tau)$ along $H_0(z)$.
 As a consequence of
 {\rm Proposition \ref{proposition_4.2}}
 and
 {\rm Proposition \ref{proposition_4.4}},
 we will see that
 $|\langle V(z,\tau),H_0(z)\rangle_{L_\rho^2(\R_z^n)}|\ll
 \|V(z,\tau)\|_{L_\rho^2(\R_z^n)}$.
 This fact concludes that
 $V(z,\tau)$ does not change its asymptotic form
 and
 proves the stability result stated in Theorem {\rm\ref{thm_2}}.
 \end{rem}

\section{Preliminary}
 \label{sec_3}
 For simplicity,
 we prepare some notations.
 \vspace{2mm}
 \begin{itemize}
 \item 
 ${\bf a}_1=(a_{1,1},a_{1,2},\cdots,a_{1,n})\in\R^n$,
 \item 
 ${\bf a}_{2,JJ}=(a_{2,11},a_{2,22},\cdots,a_{2,nn})\in\R^n$,
 \item 
 ${\bf a}_{2,IJ}=(\{a_{2,IJ}\}_{I<J})
 \in\R^{\frac{n^2-n}{2}}$,
 \item
 ${\bf a}_2=({\bf a}_{2,JJ},{\bf a}_{2,IJ})\in\R^{\frac{n^2+n}{2}}$.
 \end{itemize}
 \vspace{2mm}
 A standard Euclid norm is written as
 \begin{align*}
 |{\bf a}_1|
 &=
 \sqrt{
 a_{1,1}^2
 +
 \cdots
 +
 a_{1,n}^2},
 \\
 |{\bf a}_{2,JJ}|
 &=
 \sqrt{
 a_{2,11}^2
 +
 \cdots
 +
 a_{2,nn}^2},
 \\
 |{\bf a}_{2,IJ}|
 &=
 \sqrt{
 a_{2,12}^2
 +
 \cdots
 +
 a_{2,n-1n}^2},
 \\
 |{\bf a}_2|
 &=
 \sqrt{
 |{\bf a}_{2,JJ}|^2
 +
 |{\bf a}_{2,IJ}|^2}.
 \end{align*}
 In the same way,
 we put
 \begin{itemize}
 \item 
 ${\bf H}_1=(H_{1,1},H_{1,2},\cdots,H_{1,n})\in\R^n$,
 \item 
 ${\bf H}_{2,JJ}=(H_{2,11},H_{2,22},\cdots,H_{2,nn})\in\R^n$,
 \item 
 ${\bf H}_{2,IJ}=(\{H_{2,IJ}\}_{I<J})
 \in\R^{\frac{n^2-n}{2}}$,
 \item
 ${\bf H}_2=({\bf H}_{2,JJ},{\bf H}_{2,IJ})\in\R^{\frac{n^2+n}{2}}$.
 \end{itemize}
 Under these notations,
 we  write
 \begin{itemize}
 \item
 $\dis{\bf a}_1\cdot{\bf H}_1=\sum_{J=1}^na_{1,J}H_{1,J}$,
 \item
 $\dis{\bf a}_{2,JJ}\cdot{\bf H}_{2,JJ}=\sum_{J=1}^na_{2,JJ}H_{2,JJ}$,
 \item
 $\dis{\bf a}_{2,IJ}\cdot{\bf H}_{2,IJ}=\sum_{I<J}a_{2,IJ}H_{2,IJ}$,
 \item
 $\dis{\bf a}_2\cdot{\bf H}_2
 ={\bf a}_{2,JJ}\cdot{\bf H}_{2,J}+{\bf a}_{2,IJ}\cdot{\bf H}_{2,IJ}$.
 \end{itemize}

 \subsection{Functional inequalities}
 \label{sec_3.2}
 We collect fundamental estimates for solutions to
 \begin{align}
 \label{eqeqeq_3.2}
 \pa_\tau w=A_zw+f(z,\tau)
 \qquad
 (A_z=\Delta_z-\tfrac{z}{2}\cdot\nabla_z).
 \end{align}
 \begin{lem}[(6) p. 4218 \cite{Harada}]
 \label{lem_3.1}
 There exists $C>0$ such that
 \begin{equation*}
 \int_{\R^n}
 |z|^2f(z)^2
 \rho(z)
 dz
 <
 C
 \|f\|_{H_\rho^1(\R^n)}^2
 \qquad
 \text{\rm for }
 f\in H_\rho^1(\R^n).
 \end{equation*}
 \end{lem}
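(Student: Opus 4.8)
The plan is to obtain this weighted Poincar\'e-type inequality from the elementary identity $z\,\rho(z)=-2\,\nabla_z\rho(z)$, which is immediate from $\rho(z)=e^{-|z|^2/4}$, together with one integration by parts. The only genuine subtlety is that the quantity $\int_{\R^n}|z|^2f(z)^2\rho(z)\,dz$ to be bounded is not a priori finite for an arbitrary $f\in H_\rho^1(\R^n)$, so the computation must be carried out first on a class where all integrals converge and then extended by approximation.

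First I would record the integration-by-parts identity. For $f\in C_c^\infty(\R^n)$, using $|z|^2\rho=-2\,z\cdot\nabla_z\rho$ and the divergence theorem (no boundary term),
\begin{align*}
\int_{\R^n}|z|^2f^2\rho\,dz
&=-2\int_{\R^n}(z\,f^2)\cdot\nabla_z\rho\,dz
=2\int_{\R^n}\bigl(\nabla_z\cdot(z\,f^2)\bigr)\rho\,dz\\
&=2n\int_{\R^n}f^2\rho\,dz+4\int_{\R^n}f\,(z\cdot\nabla_zf)\,\rho\,dz.
\end{align*}
I would then estimate the cross term by Cauchy--Schwarz,
\[
4\Bigl|\int_{\R^n}f\,(z\cdot\nabla_zf)\,\rho\,dz\Bigr|
\le 4\Bigl(\int_{\R^n}|z|^2f^2\rho\,dz\Bigr)^{1/2}\Bigl(\int_{\R^n}|\nabla_zf|^2\rho\,dz\Bigr)^{1/2},
\]
and absorb one half of $\int_{\R^n}|z|^2f^2\rho\,dz$ into the left-hand side via Young's inequality $4ab\le\tfrac12a^2+8b^2$. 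This gives
\[
\int_{\R^n}|z|^2f^2\rho\,dz\le 4n\int_{\R^n}f^2\rho\,dz+16\int_{\R^n}|\nabla_zf|^2\rho\,dz
\le(4n+16)\,\|f\|_{H_\rho^1(\R^n)}^2,
\]
where the last step is in fact strict whenever $f\not\equiv0$, since $(4n+16)\|f\|_{H_\rho^1(\R^n)}^2-4n\|f\|_{L_\rho^2(\R^n)}^2-16\|\nabla_zf\|_{L_\rho^2(\R^n)}^2=16\|f\|_{L_\rho^2(\R^n)}^2+4n\|\nabla_zf\|_{L_\rho^2(\R^n)}^2>0$. Thus the lemma holds with $C=4n+16$.

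To pass to general $f\in H_\rho^1(\R^n)$ I would use the density of $C_c^\infty(\R^n)$ in $H_\rho^1(\R^n)$ (obtained by multiplying by a cutoff $\eta(z/R)$ and then mollifying): choosing $f_k\in C_c^\infty(\R^n)$ with $f_k\to f$ in $H_\rho^1(\R^n)$, the inequality applied to $f_k-f_\ell$ shows that $\{|z|f_k\}$ is Cauchy in $L_\rho^2(\R^n)$, and its limit coincides a.e.\ with $|z|f$ along a subsequence for which $f_k\to f$ a.e., so $|z|f\in L_\rho^2(\R^n)$ and the bound passes to the limit. (Alternatively one can avoid density by applying the displayed computation to $\int_{\R^n}|z|^2f^2\eta_k^2\rho\,dz$ for increasing cutoffs $\eta_k$ with $\eta_k\equiv1$ on $\{|z|\le k\}$ and $|\nabla_z\eta_k|\le C/k$; the extra term generated by $\nabla_z\eta_k$ is bounded by $C\int_{\{k\le|z|\le2k\}}f^2\rho\,dz\le C\|f\|_{L_\rho^2(\R^n)}^2$ uniformly in $k$, and one lets $k\to\infty$ by monotone convergence.) The only obstacle worth flagging is exactly this a priori integrability issue; the algebraic core is the standard Gaussian integration by parts and requires no new idea.
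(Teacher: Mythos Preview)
Your proof is correct. The paper does not supply its own proof of this lemma; it simply cites equation (6) on p.~4218 of \cite{Harada} and moves on. Your argument---the Gaussian identity $z\rho=-2\nabla_z\rho$, one integration by parts, Cauchy--Schwarz with absorption, and a density step---is the standard route to this weighted $L^2$ estimate and yields the explicit constant $C=4n+16$. There is nothing to compare against here beyond noting that you have filled in what the paper leaves to a reference.
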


 \begin{lem}[Lemma 2.1 p. 4219 \cite{Harada}]
 \label{lem_3.2}
 For any $R>0$
 there exists $c_R>0$ such that
 \begin{equation*}
 \sup_{|z|<Re^\frac{\tau}{2}}
 |e^{A_z\tau}w_0|
 <
 c_R
 \|w_0\|_\rho
 \qquad
 \text{\rm for }
 \tau>1.
 \end{equation*}
 \end{lem}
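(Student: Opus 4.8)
This is a standard smoothing estimate for the Ornstein--Uhlenbeck semigroup generated by $A_z=\Delta_z-\tfrac z2\cdot\nabla_z$, and the plan is to read it off from the explicit Mehler-kernel representation of $e^{A_z\tau}$. Writing $\sigma(\tau)=1-e^{-\tau}$, the solution of \eqref{eqeqeq_3.2} with $f\equiv0$ and initial datum $w_0$ is
\[
(e^{A_z\tau}w_0)(z)=\int_{\R^n}G(z,y,\tau)\,w_0(y)\,dy,
\qquad
G(z,y,\tau)=\frac{1}{(4\pi\sigma(\tau))^{n/2}}\exp\!\left(-\frac{|y-e^{-\tau/2}z|^2}{4\sigma(\tau)}\right),
\]
which one checks either by substituting this $w$ into \eqref{eqeqeq_3.2}, or via the probabilistic representation of $e^{A_z\tau}$ through the Ornstein--Uhlenbeck process.

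First I would apply the Cauchy--Schwarz inequality against the Gaussian weight $\rho(y)=e^{-|y|^2/4}$, giving for every $z$ and $\tau>0$
\[
|(e^{A_z\tau}w_0)(z)|\le\left(\int_{\R^n}\frac{G(z,y,\tau)^2}{\rho(y)}\,dy\right)^{1/2}\|w_0\|_\rho .
\]
Hence it remains to bound $I(z,\tau):=\int_{\R^n}G(z,y,\tau)^2/\rho(y)\,dy$ by a constant $c_R^2$ depending only on $n$ and $R$, uniformly for $\tau>1$ and $|z|<Re^{\tau/2}$.

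To this end I would evaluate $I(z,\tau)$ explicitly. Putting $a=e^{-\tau/2}z$, so that $|a|<R$ on the admissible range of $z$, and expanding the square, the exponent of the integrand $G^2/\rho$ is
\[
-\bigl(\tfrac{1}{2\sigma(\tau)}-\tfrac14\bigr)|y|^2+\tfrac{1}{\sigma(\tau)}\,a\cdot y-\tfrac{|a|^2}{2\sigma(\tau)} .
\]
Since $\sigma(\tau)\in(0,1)$, the coefficient $\alpha(\tau):=\tfrac{1}{2\sigma(\tau)}-\tfrac14$ of $-|y|^2$ satisfies $\alpha(\tau)\ge\tfrac14$, so the quadratic form is uniformly negative definite; completing the square and integrating in $y$ gives $I(z,\tau)=(4\pi\sigma(\tau))^{-n}(\pi/\alpha(\tau))^{n/2}\exp\!\bigl(\tfrac{|a|^2}{2(2-\sigma(\tau))}\bigr)$, where I used $\tfrac{1}{\sigma(2-\sigma)}-\tfrac{1}{2\sigma}=\tfrac{1}{2(2-\sigma)}$. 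For $\tau>1$ one has $1-e^{-1}\le\sigma(\tau)<1$, whence $(4\pi\sigma(\tau))^{-n}\le(4\pi(1-e^{-1}))^{-n}$, $(\pi/\alpha(\tau))^{n/2}\le(4\pi)^{n/2}$, and $\tfrac{|a|^2}{2(2-\sigma(\tau))}\le\tfrac{R^2}{2}$; these bounds combine to $I(z,\tau)\le(4\pi(1-e^{-1}))^{-n}(4\pi)^{n/2}e^{R^2/2}=:c_R^2$, and replacing $c_R$ by a slightly larger constant makes the inequality strict. This proves the lemma.

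The argument is entirely elementary, so I do not anticipate a genuine obstacle; the only point requiring some care is the bookkeeping of constants in the last step, and in particular checking the two uniformities that make it work: for $\tau$ bounded below, $\sigma(\tau)$ stays away from both $0$ and $1$, which controls the prefactor $(4\pi\sigma(\tau))^{-n}$ and keeps the coefficient of $|y|^2$ bounded away from $0$ (so that $I(z,\tau)$ is finite at all); and the restriction $|z|<Re^{\tau/2}$ is exactly what bounds the center $a=e^{-\tau/2}z$ by $R$, keeping the exponential factor produced by completing the square under control. If one preferred to avoid the explicit kernel, an alternative is to combine the $L^2_\rho\to L^\infty$ ultracontractivity of $e^{A_z\tau}$ with the self-similar change of variables $z\mapsto e^{-\tau/2}z$, but the direct computation above is the most transparent.
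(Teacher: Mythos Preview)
Your argument is correct: the Mehler kernel representation, the Cauchy--Schwarz step, and the Gaussian computation of $I(z,\tau)$ are all carried out accurately, and the two uniformities you identify (boundedness of $\sigma(\tau)$ away from $0$ and $1$ for $\tau>1$, and $|e^{-\tau/2}z|<R$) are exactly what is needed. The paper does not give its own proof of this lemma but simply quotes it from \cite{Harada}; your direct kernel computation is the standard route and is precisely what one would expect the cited proof to do.
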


 \begin{lem}[Lemma 2.3 p. 4220 \cite{Harada}]
 \label{lem_3.3}
 For any $\sigma>0$ and $p>0$,
 there exists $C>0$ such that
 if $\tau-2\log \tau<\tau_1<\tau$,
 then it holds that
 \begin{equation*}
 \sup_{|z|<\tau^\sigma}
 \int_{\tau_1}^\tau
 e^{A_z(\tau-s)}
 (|\xi|^p{\bf 1}_{|\xi|>2\tau^\sigma})
 ds
 <
 C\tau^{p+n-2}e^{-\frac{1}{16}\tau^{2\sigma}}.
 \end{equation*}
 \end{lem}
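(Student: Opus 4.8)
\textbf{Proof proposal for Lemma \ref{lem_3.3}.}

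The plan is to estimate the inhomogeneous term directly via the explicit Mehler kernel representation of $e^{A_z t}$. Recall that the semigroup generated by $A_z=\Delta_z-\tfrac{z}{2}\cdot\nabla_z$ has the kernel
\begin{align*}
(e^{A_z t}g)(z)
=
\frac{1}{(4\pi(1-e^{-t}))^{n/2}}
\int_{\R^n}
\exp\!\Bigl(-\frac{|z e^{-t/2}-y|^2}{4(1-e^{-t})}\Bigr)
g(y)\,dy,
\end{align*}
so that for a nonnegative integrand $g\geq0$ we have the pointwise bound. Applying this with $g(y)=|y|^p{\bf 1}_{|y|>2\tau^\sigma}$ and $t=\tau-s\in(0,2\log\tau)$, and restricting to $|z|<\tau^\sigma$, the exponential factor forces $|y|\geq2\tau^\sigma>2|z|\geq2|z|e^{-t/2}$, hence $|ze^{-t/2}-y|\geq|y|-|z|e^{-t/2}\geq\tfrac12|y|$. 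The first step is therefore to show that on this region
\begin{align*}
(e^{A_z(\tau-s)}g)(z)
\leq
\frac{C}{(1-e^{-(\tau-s)})^{n/2}}
\int_{|y|>2\tau^\sigma}
|y|^p
\exp\!\Bigl(-\frac{|y|^2}{16(1-e^{-(\tau-s)})}\Bigr)dy.
\end{align*}

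The second step is to bound the $y$-integral. Since $\tau-s$ can be as small as $0$ when $s$ is near $\tau$, the prefactor $(1-e^{-(\tau-s)})^{-n/2}$ is singular, so I cannot simply drop it. Instead, substitute $a=1-e^{-(\tau-s)}\in(0,1)$ and rescale $y=\sqrt{a}\,\eta$; the integral becomes $a^{(p+n)/2}a^{-n/2}\int_{|\eta|>2\tau^\sigma/\sqrt a}|\eta|^p e^{-|\eta|^2/16}d\eta$, and since $a<1$ the domain contains $|\eta|>2\tau^\sigma$. A standard Gaussian tail estimate gives $\int_{|\eta|>R}|\eta|^p e^{-|\eta|^2/16}d\eta\leq C R^{p+n-2}e^{-R^2/16}$ for $R\geq1$ (integrate in polar coordinates and compare with $\int_R^\infty r^{p+n-1}e^{-r^2/16}dr\leq C R^{p+n-2}e^{-R^2/16}$). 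Taking $R=2\tau^\sigma$ and using $a^{p/2}\leq1$ yields $(e^{A_z(\tau-s)}g)(z)\leq C\tau^{\sigma(p+n-2)}e^{-\tfrac14\tau^{2\sigma}}\leq C\tau^{p+n-2}e^{-\tfrac1{16}\tau^{2\sigma}}$, uniformly in $s$ and in $|z|<\tau^\sigma$ (absorbing the discrepancy between $\tfrac14$ and $\tfrac1{16}$ and between $\sigma(p+n-2)$ and $p+n-2$ into the exponential, valid for $\tau$ large; for bounded $\tau$ the claim is trivial by enlarging $C$).

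The third step is the time integration: $\int_{\tau_1}^\tau(\cdots)\,ds$ over an interval of length $\tau-\tau_1<2\log\tau$. Since the integrand is already bounded by $C\tau^{p+n-2}e^{-\tfrac1{16}\tau^{2\sigma}}$ uniformly in $s$, the integral is at most $C\tau^{p+n-2}(\log\tau)\,e^{-\tfrac1{16}\tau^{2\sigma}}$, and the extra $\log\tau$ is again absorbed into the exponential decay for large $\tau$, giving the stated bound $C\tau^{p+n-2}e^{-\tfrac1{16}\tau^{2\sigma}}$.

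The main obstacle I anticipate is the uniformity of the kernel estimate down to $\tau-s=0$: one must be careful that the Mehler prefactor singularity at $a\to0$ is genuinely compensated by the Gaussian, which it is because the scaling $y=\sqrt a\,\eta$ turns the singular factor into $a^{p/2}\leq1$ after the tail region shrinks appropriately — the key point is that the cutoff scale $2\tau^\sigma$ is \emph{fixed} relative to $s$, so rescaling only helps. A secondary, purely bookkeeping, obstacle is tracking the constants so that the final exponential rate is exactly $\tfrac1{16}$ as stated; this is handled by always estimating $e^{-c\tau^{2\sigma}}\leq C_\epsilon e^{-(c-\epsilon)\tau^{2\sigma}}$ and choosing margins generously. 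None of these steps is deep; the lemma is essentially a quantitative Gaussian tail computation dressed in semigroup notation, and I would present it as such.
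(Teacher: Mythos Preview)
The paper does not supply a proof of this lemma; it is quoted from \cite{Harada} (Lemma~2.3, p.~4220) and used as a black box, so there is no in-paper argument to compare against. Your Mehler-kernel computation is the standard route and is correct: the only slip is verbal --- after the rescaling $y=\sqrt{a}\,\eta$ the integration domain $\{|\eta|>2\tau^\sigma/\sqrt a\}$ is \emph{contained in} $\{|\eta|>2\tau^\sigma\}$ (not the other way around), but since you are bounding from above this is exactly what you need, and your subsequent estimates go through as written.
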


 \section{Key estimates for rescaled solutions}
 \begin{lem}
 \label{lemma_4.1}
 Assume $n\geq1$.
 Fix ${\sf m}\in\R\setminus\{0\}$,
 and put
 \begin{align*}
 B(\nu)
 =
 \{w\in L_\rho^2(\R^n);
 \|w-{\sf m}\sum_{J=1}^nH_{2,JJ}\|_\rho<\nu
 \},
 \quad
 \nu\in(0,1).
 \end{align*}
 There exist $\nu_1,\nu_2\in(0,1)$ and $\Psi(w)\in C^1(B(\nu_1);\R^n)$
 satisfying
 \begin{itemize}
 \item
 $\dis
 \langle
 w(z+\Psi(w)),H_{1,J}(z)
 \rangle_\rho=0$
 \quad $(J=1,2,\cdots,n)$,
 \item
 $\Psi(w)=0$ \quad when $\dis w={\sf m}\sum_{J=1}^nH_{2,JJ}$,
 \item
 for $(w,\xi)\in B_{\nu_1}\times\{|\xi|<\nu_2\}$,
 {\rm(i)} and {\rm(ii)} are equivalent\,$:$
 \begin{enumerate}[\rm(i) ]
 \item 
 $\langle w(z+\xi),H_{1,J}(z)\rangle_\rho=0$
 \quad
 {\rm for all} $J$,
 \item
 $\xi=\Psi(w)$,
 \end{enumerate}

 \item
 there exists $c_1>1$
 such that
 \begin{align*}
 |\Psi(w)|
 <
 c_1\|w-{\sf m}\sum_{J=1}^nH_{2,JJ}\|_\rho.
 \end{align*}
 \end{itemize}
 \end{lem}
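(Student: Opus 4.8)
The plan is to apply the implicit function theorem to the map that records the first-order modes. Define
\[
F:L_\rho^2(\R^n)\times\R^n\to\R^n,
\qquad
F(w,\xi)=\bigl(\langle w(z+\xi),H_{1,J}(z)\rangle_\rho\bigr)_{J=1}^n.
\]
First I would check that $F$ is well defined and $C^1$ near the base point $(w_*,0)$ with $w_*={\sf m}\sum_{J=1}^nH_{2,JJ}$. The only delicate point here is differentiability of $\xi\mapsto w(z+\xi)$ as a map into $L_\rho^2$: since $w$ is merely $L_\rho^2$, one cannot differentiate under the integral sign directly. The clean way is to move the shift onto the test function by a change of variables, writing $\langle w(z+\xi),H_{1,J}(z)\rangle_\rho=\int w(y)H_{1,J}(y-\xi)e^{-|y-\xi|^2/4}\,dy$; since $H_{1,J}(y-\xi)e^{-|y-\xi|^2/4}$ depends smoothly on $\xi$ with all $\xi$-derivatives dominated (locally uniformly in $\xi$) by a fixed $L_\rho^2$ weight times a Gaussian, the integral is $C^1$ in $\xi$ (indeed $C^\infty$), with derivative
\[
\pa_{\xi_k}F_J(w,\xi)
=\int w(y)\,\pa_{\xi_k}\!\bigl[H_{1,J}(y-\xi)e^{-|y-\xi|^2/4}\bigr]dy.
\]
The same estimate shows $F$ is continuous (hence $C^1$) jointly in $(w,\xi)$.

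\textbf{The base point and the Jacobian.}
At the base point, $F(w_*,0)=\langle w_*,H_{1,J}\rangle_\rho=0$ because $w_*\in V_2$ is orthogonal to $V_1$ in $L_\rho^2$. For the invertibility of $\pa_\xi F(w_*,0)$, compute: evaluating the displayed derivative at $\xi=0$ and integrating by parts back, $\pa_{\xi_k}F_J(w_*,0)=-\langle w_*,\pa_{z_k}H_{1,J}\rangle_\rho$. Since $H_{1,J}(z)=\tfrac{{\sf k}_0^n}{\sqrt2}z_J$, we get $\pa_{z_k}H_{1,J}=\tfrac{{\sf k}_0^n}{\sqrt2}\delta_{kJ}$, a constant, and $\langle w_*,1\rangle_\rho=0$ because $w_*\perp V_0$ as well. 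So $\pa_\xi F(w_*,0)=0$ — the naive Jacobian \emph{vanishes}. This means one must differentiate once more, or equivalently rescale $\xi$. The fix is to note that for $w$ near $w_*$ one has $\pa_{\xi_k}F_J(w,\xi)=-\langle w(\cdot+\xi),\pa_{z_k}H_{1,J}\rangle_\rho=-\tfrac{{\sf k}_0^n}{\sqrt2}\langle w(\cdot+\xi),1\rangle_\rho\delta_{kJ}$... wait, that is still a multiple of $\delta_{kJ}$ but with coefficient $-\tfrac{{\sf k}_0^n}{\sqrt2}\langle w(\cdot+\xi),H_0\rangle_\rho/{\sf k}_0^n$, which near $w_*$ is still close to $0$. \textbf{This is the main obstacle}, and it forces the following reformulation: instead of using the implicit function theorem on $F$ directly, one solves the equation by a fixed-point / Newton scheme exploiting the quadratic structure. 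Precisely, expand $F_J(w,\xi)$ in $\xi$; the $\xi$-linear term is $O(\|w-w_*\|_\rho)|\xi|$ (negligible), while the $\xi$-quadratic term, evaluated at $w=w_*$, is $\tfrac12\sum_{k,l}\xi_k\xi_l\langle w_*,\pa_{z_k}\pa_{z_l}H_{1,J}\rangle_\rho=0$ too since $H_{1,J}$ is linear; one has to go to the cubic term, where $\langle w_*,z_kz_lz_m\,(\text{Hermite combination})\rangle_\rho$ can be nonzero because $w_*\in V_2$ pairs with the $V_3$-part. Working this out, $F_J(w_*,\xi)=c\,|\xi|^2\xi_J+O(|\xi|^4)$ with $c=c({\sf m})\neq0$ (this is where ${\sf m}\neq0$ is used); combined with the perturbation in $w$ one gets a map whose zero is obtained by a contraction mapping argument on $\xi$, giving existence, uniqueness in a small ball, $\Psi(w_*)=0$, the $C^1$ regularity (via the version of the implicit function theorem adapted to the degenerate-but-homogeneous leading term, or by differentiating the fixed-point equation), and the bound $|\Psi(w)|<c_1\|w-w_*\|_\rho$ from the quantitative contraction estimate.

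\textbf{Wrapping up the four assertions.}
Once $\Psi$ is constructed as the fixed point, the first bullet $\langle w(z+\Psi(w)),H_{1,J}(z)\rangle_\rho=0$ is the defining equation; the second bullet $\Psi(w_*)=0$ holds since $\xi=0$ solves $F(w_*,\xi)=0$ and lies in the uniqueness ball; the equivalence of (i) and (ii) on $B_{\nu_1}\times\{|\xi|<\nu_2\}$ is precisely the local uniqueness of the fixed point, after shrinking $\nu_1,\nu_2$ so that every $\xi$ with $|\xi|<\nu_2$ solving $F(w,\xi)=0$ lies in the contraction ball; and the estimate $|\Psi(w)|<c_1\|w-w_*\|_\rho$ is read off from the contraction: since the nonlinear part at $w_*$ is $O(|\xi|^3)$ and the $w$-dependence enters affinely with coefficient $\|w-w_*\|_\rho$, balancing $|\xi|^3\lesssim\|w-w_*\|_\rho(1+|\xi|)$ gives even $|\Psi(w)|\lesssim\|w-w_*\|_\rho^{1/3}$; to get the linear bound claimed one uses instead the full equation $0=F(w,\Psi)=\langle w-w_*,H_{1,J}\rangle_\rho+(\text{terms }O(\|w-w_*\|_\rho|\Psi|)+O(|\Psi|^3))$, so $|\Psi|\lesssim|\langle w-w_*,H_{1,J}\rangle_\rho|/(1-o(1))\le c_1\|w-w_*\|_\rho$ after absorbing the higher-order terms by choosing $\nu_1$ small. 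I expect the bookkeeping of the cubic term and the verification that $c({\sf m})\neq0$ to be the only genuinely computational part; everything else is a standard soft argument.
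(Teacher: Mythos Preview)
Your main obstacle is not real: the Jacobian $\pa_\xi F(w_*,0)$ is \emph{not} zero. The mistake is in the line
\[
\pa_{\xi_k}F_J(w_*,0)=-\langle w_*,\pa_{z_k}H_{1,J}\rangle_\rho.
\]
When you ``integrate by parts back'' you forgot that the weight $\rho(z)=e^{-|z|^2/4}$ also gets differentiated. Starting from $F_J(w,\xi)=\int w(y)H_{1,J}(y-\xi)\rho(y-\xi)\,dy$ and using $\pa_{\xi_k}=-\pa_{y_k}$ on the integrand, one obtains at $\xi=0$
\[
\pa_{\xi_k}F_J(w,0)
=\bigl\langle w,\ -\pa_{z_k}H_{1,J}+\tfrac{z_k}{2}H_{1,J}\bigr\rangle_\rho.
\]
With $H_{1,J}(z)=\tfrac{{\sf k}_0^n}{\sqrt2}z_J$ this equals $\langle w,H_{2,JJ}\rangle_\rho$ if $k=J$ and $\tfrac{1}{\sqrt2}\langle w,H_{2,kJ}\rangle_\rho$ if $k\neq J$. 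Evaluated at $w_*={\sf m}\sum_L H_{2,LL}$, orthonormality of the $H_{2,IJ}$ gives
\[
\pa_\xi F(w_*,0)={\sf m}\,I_n,
\]
which is invertible precisely because ${\sf m}\neq0$. This is exactly the computation the paper carries out, and the standard implicit function theorem then yields $\Psi\in C^1$, the local uniqueness (equivalence of (i) and (ii)), $\Psi(w_*)=0$, and the Lipschitz bound $|\Psi(w)|\le c_1\|w-w_*\|_\rho$ directly, with no need for any cubic analysis or contraction scheme.

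Everything from ``So $\pa_\xi F(w_*,0)=0$'' onward in your proposal should be discarded. The elaborate fixed-point argument with the cubic leading term is not only unnecessary but would also give the wrong scaling (you yourself noticed it first produces $|\Psi|\lesssim\|w-w_*\|_\rho^{1/3}$, and the patch at the end relies on a linear term you had already declared negligible).
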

 \begin{proof}
 We define $F:L_\rho^2(\R^n)\times\R^n\to\R^n$ by
 \begin{align*}
 F(w,\xi)=
 \begin{pmatrix}
 \langle w(z+\xi),H_{1,1}(z)\rangle_\rho
 \\
 \vdots
 \\
 \langle w(z+\xi),H_{1,n}(z)\rangle_\rho
 \end{pmatrix}.
 \end{align*}
 We note that
 $\pa_wF(w,\xi)\in\mathcal{L}(L_\rho^2(\R^n);\R^n)$ is given by
 \begin{align*}
 \langle\pa_wF(w,\xi),v\rangle
 =
 \begin{pmatrix}
 \langle v(z+\xi),H_{1,1}(z)\rangle_\rho
 \\
 \vdots
 \\
 \langle v(z+\xi),H_{1,n}(z)\rangle_\rho
 \end{pmatrix}
 \qquad
 \text{for }
 v\in L_\rho^2(\R^n).
 \end{align*}
 From this formula,
 it is clear that
 \begin{align}
 \label{EQU4.1}
 \pa_wF(w,\xi)\in C(L_\rho^2(\R^n)\times\R^n;\mathcal{L}(L_\rho^2(\R^n);\R^n)).
 \end{align}
 We next compute
 $\pa_{\xi}F(w,\xi)$.
 By a direct computation,
 we see that
 \begin{align*}
 \tfrac{\pa F_1}{\pa \xi_1}
 &=
 \tfrac{\pa}{\pa \xi_1}
 \int_{\R^n}
 w(z+\xi)
 H_{1,1}(z)
 e^{-\frac{|z|^2}{4}}
 dz
 \\
 &=
 \tfrac{\pa}{\pa \xi_1}
 \int_{\R^n}
 w(\eta)
 H_{1,1}(\eta-\xi)
 e^{-\frac{|\eta-\xi|^2}{4}}
 d\eta.
 \end{align*}
 Since
 $H_{1,1}(z)=\frac{{\sf k}_0^n}{\sqrt{2}}z_1$,
 we verify that
 \begin{align*}
 \tfrac{\pa}{\pa \xi_1}
 \{
 H_{1,1}(\eta-\xi)
 e^{-\frac{|\eta-\xi|^2}{4}}
 \}
 &=
 \tfrac{\pa}{\pa \xi_1}
 \{
 \tfrac{{\sf k}_0^n}{\sqrt{2}}
 (\eta_1-\xi_1)
 e^{-\frac{|\eta-\xi|^2}{4}}
 \}
 \\
 &=
 -
 \tfrac{{\sf k}_0^n}{\sqrt{2}}
 e^{-\frac{|\eta-\xi|^2}{4}}
 +
 \tfrac{{\sf k}_0^n}{\sqrt{2}}
 \tfrac{(\eta_1-\xi_1)^2}{2}
 e^{-\frac{|\eta-\xi|^2}{4}}.
 \end{align*}
 Hence
 we get
 \begin{align*}
 \tfrac{\pa F_1}{\pa \xi_1}
 &=
 \int_{\R^n}
 w(\eta)
 \tfrac{{\sf k}_0^n}{\sqrt{2}}
 \{
 -1
 +
 \tfrac{(\eta_1-\xi_1)^2}{2}
 \}
 e^{-\frac{|\eta-\xi|^2}{4}}
 d\eta
 \\
 &=
 \tfrac{{\sf k}_0^n}{\sqrt{2}}
 \int_{\R^n}
 w(z+\xi)
 (
 -1
 +
 \tfrac{z_1^2}{2}
 )
 \rho(z)
 dz
 \\
 &=
 \int_{\R^n}
 w(z+\xi)
 \tfrac{{\sf k}_0^n}{2\sqrt{2}}
 (
 z_1^2-2
 )
 \rho(z)
 dz
 \\
 &=
 \int_{\R^n}
 w(z+\xi)
 H_{2,11}(z)
 \rho(z)
 dz
 \\
 &=
 \langle
 w(z+\xi),
 H_{2,11}(z)
 \rangle_\rho.
 \end{align*}
 In the same way,
 we verify that
 \begin{align*}
 \tfrac{\pa}{\pa \xi_2}
 \{
 H_{1,1}(\eta-\xi)
 e^{-\frac{|\eta-\xi|^2}{4}}
 \}
 &=
 \tfrac{{\sf k}_0^n}{\sqrt{2}}
 \tfrac{(\eta_1-\xi_1)(\eta_2-\xi_2)}{2}
 e^{-\frac{|\eta-\xi|^2}{4}}
 \end{align*}
 and
 \begin{align*}
 \tfrac{\pa F_1}{\pa \xi_2}
 &=
 \int_{\R^n}
 w(\eta)
 \tfrac{{\sf k}_0^n}{\sqrt{2}}
 \tfrac{(\eta_1-\xi_1)(\eta_2-\xi_2)}{2}
 e^{-\frac{|\eta-\xi|^2}{4}}
 d\eta
 \\
 &=
 \tfrac{{\sf k}_0^n}{2\sqrt{2}}
 \int_{\R^n}
 w(z+\xi)
 z_1z_2
 \rho(z)
 dz
 \\
 &=
 \tfrac{1}{\sqrt{2}}
 \int_{\R^n}
 w(z+\xi)
 H_{2,12}(z)
 \rho(z)
 dz
 \\
 &=
 \tfrac{1}{\sqrt{2}}
 \langle
 w(z+\xi),
 H_{2,12}(z)
 \rangle_\rho.
 \end{align*}
 Therefore
 it follows that
 \begin{align*}
 &
 \begin{pmatrix}
 \tfrac{\pa F_1}{\pa \xi_1} & \tfrac{\pa F_1}{\pa \xi_2} & \cdots
 & \tfrac{\pa F_1}{\pa \xi_n}
 \\[2mm]
 \tfrac{\pa F_2}{\pa \xi_1} & \tfrac{\pa F_2}{\pa \xi_2} & \cdots
 & \tfrac{\pa F_2}{\pa \xi_n}
 \\[2mm]
 \vdots & \vdots & \vdots &\vdots
 \\[2mm]
 \tfrac{\pa F_n}{\pa \xi_1} & \tfrac{\pa F_n}{\pa \xi_n} & \cdots
 & \tfrac{\pa F_2}{\pa \xi_n}
 \end{pmatrix}
 \\
 &=
 \begin{pmatrix}
 \langle
 w(z+\xi),
 H_{2,11}
 \rangle_\rho
 &
 \langle
 w(z+\xi),
 \tfrac{H_{2,12}}{\sqrt{2}}
 \rangle_\rho
 & \cdots &
 \langle
 w(z+\xi),
 \tfrac{H_{2,1n}}{\sqrt{2}}
 \rangle_\rho
 \\[2mm]
 \nonumber
 \langle
 w(z+\xi),
 \tfrac{H_{2,21}}{\sqrt{2}}
 \rangle_\rho
 &
 \langle
 w(z+\xi),
 H_{2,22}
 \rangle_\rho
 & \cdots &
 \langle
 w(z+\xi),
 \tfrac{H_{2,2n}}{\sqrt{2}}
 \rangle_\rho
 \\[2mm]
 \vdots & \vdots & \vdots &\vdots
 \\[2mm]
 \langle
 w(z+\xi),
 \tfrac{H_{2,n1}}{\sqrt{2}}
 \rangle_\rho
 &
 \langle
 w(z+\xi),
 \tfrac{H_{2,n1}}{\sqrt{2}}
 \rangle_\rho
 & \cdots &
 \langle
 w(z+\xi),
 H_{2,nn}
 \rangle_\rho
 \end{pmatrix}.
 \end{align*}
 This implies that
 \begin{align}
 \label{EQU4.2}
 \pa_\xi F(w,\xi)
 \in
 C(L_\rho^2(\R^n)\times\R^n;\mathcal{L}(\R^n;\R^n))
 \end{align}
 and
 \begin{align}
 \label{EQU4.3}
 &
 \begin{pmatrix}
 \tfrac{\pa F_1}{\pa \xi_1} & \tfrac{\pa F_1}{\pa \xi_2} & \cdots
 & \tfrac{\pa F_1}{\pa \xi_n}
 \\[2mm]
 \tfrac{\pa F_2}{\pa \xi_1} & \tfrac{\pa F_2}{\pa \xi_2} & \cdots
 & \tfrac{\pa F_2}{\pa \xi_n}
 \\[2mm]
 \vdots & \vdots & \vdots &\vdots
 \\[2mm]
 \tfrac{\pa F_n}{\pa \xi_1} & \tfrac{\pa F_n}{\pa \xi_n} & \cdots
 & \tfrac{\pa F_2}{\pa \xi_n}
 \end{pmatrix}
 (w,\xi)
 =
 \begin{pmatrix}
 {\sf m} & 0 & \cdots & 0
 \\[2mm]
 0 & {\sf m} & \cdots & 0
 \\[2mm]
 \vdots & \vdots & \vdots &\vdots
 \\[2mm]
 0 & 0 & \cdots & {\sf m}
 \end{pmatrix}
 \end{align}
 when $\dis(w,\xi)=({\sf m}\sum_{J=1}^nH_{2,JJ},0)$.
 From the definition of $F(w,\xi)$,
 it is clear that
 \begin{align}
 \label{EQU4.4}
 F(w,\xi)=0
 \in\R^n
 \qquad
 \text{when }
 (w,\xi)=({\sf m}\sum_{J=1}^nH_{2,JJ},0).
 \end{align}
 Therefore from \eqref{EQU4.1} - \eqref{EQU4.4},
 we can apply an implicit function theorem,
 and obtain the desired result.
 \end{proof}

 \subsection{Stability estimates}
 Consider
 \begin{align}
 \label{eq_4.5}
 w_\tau
 =
 \Delta_zw
 -
 \tfrac{z}{2}
 \cdot
 \nabla_zw
 &-
 \tfrac{w}{p-1}
 +
 |w|^{p-1}
 w
 \\
 \nonumber
 &\quad
 \text{for }
 z\in\R^n
 \text{ and }
 \tau>\tau_0.
 \end{align}
 We call $w(z,\tau)$ a solution of \eqref{eq_4.5}
 if
 there exists $\tau_1>\tau_0$ such that
 $w(z,\tau)$ satisfies (w1) - (w2) and solve \eqref{eq_4.5}
 for $\tau\in(\tau_0,\tau_1)$.
 \begin{enumerate}[\rm ({w}1) ]
 \setlength{\leftskip}{5mm}
 \item
 $w(z,\tau)\in C([\tau_0,\tau_1);H_\rho^1(\R^n))\cap
 C^{2,1}(\R^n\times(\tau_0,\tau_1))$,
 \item
 $w(z,\tau)\in L^\infty(\R^n\times(\tau_0,\tau_1-h))$ \quad for any small $h>0$.
 \end{enumerate}
 Throughout Proposition \ref{proposition_4.2} - Proposition \ref{proposition_4.4},
 we assume
 \begin{align}
 \label{eq_4.6}
 w(z,\tau_0)
 >
 -e^{-\frac{3\tau_0}{4(p-1)}}
 \qquad
 \text{for }
 z\in\R^n.
 \end{align}
 If $w(z,\tau)$ is a solution of \eqref{eq_4.5} satisfying \eqref{eq_4.6},
 a comparison argument immediately shows
 \begin{align}
 \label{eq_4.7}
 w(z,\tau)
 >
 -e^{-\frac{3\tau}{4(p-1)}}
 \qquad
 \text{for }
 z\in\R^n
 \text{ and }
 \tau>\tau_0.
 \end{align}
 \begin{pro}
 \label{proposition_4.2}
 Assume $n\geq1$ and $p>1$.
 Let
 $w(z,\tau)$ be a solution of \eqref{eq_4.5}
 satisfying \eqref{eq_4.6}.
 There exists $\tau_*>0$ depending only on $p,n$
 such that if there exists $\tau_1\geq\max\{\tau_*,\tau_0\}$
 such that
 \begin{align}
 \label{eq_4.8}
 \langle w(\tau_1)-\kappa,H_0\rangle_\rho
 >
 \tfrac{4p\kappa^{p-1}}{H_0}
 e^{-\frac{3\tau_1}{4(p-1)}},
 \end{align}
 then
 there exists $\tau_\text{\sf b}\in(\tau_1,\infty)$
 such that
 $\dis\limsup_{\tau\to\tau_{\sf b}}\|w(z,\tau)\|_{L_z^\infty(\R^n)}=\infty$.
 \end{pro}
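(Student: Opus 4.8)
\emph{Proof idea.} The plan is to monitor the projection of $w-\kappa$ onto the single unstable mode not controlled by the hypothesis, namely $H_0$. Put $a(\tau)=\langle w(\cdot,\tau)-\kappa,H_0\rangle_\rho$; since $w$ is a smooth solution, bounded on compact subintervals of its existence interval $(\tau_0,\tau_{\max})$, the function $a$ is $C^1$ there. Writing $W=w-\kappa$ (which solves \eqref{eq_2.4}) and pairing \eqref{eq_2.4} with $H_0$, the identity $\langle A_zW,H_0\rangle_\rho=-\langle\nabla_zW,\nabla_zH_0\rangle_\rho=0$ — valid because $\nabla_zH_0\equiv0$ and the boundary term vanishes thanks to $W(\tau)\in H_\rho^1(\R^n)$ — gives the scalar equation
\begin{align*}
\tfrac{da}{d\tau}=a+\langle N,H_0\rangle_\rho,
\qquad
N=|w|^{p-1}w-\kappa^p-p\kappa^{p-1}(w-\kappa).
\end{align*}
Because $H_0={\sf k}_0^n$ is constant one also has $|a(\tau)|\le{\sf k}_0^{-n}\bigl(\|w(\cdot,\tau)\|_{L^\infty(\R^n)}+\kappa\bigr)$, so $a(\tau)<\infty$ whenever $w$ exists. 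Hence it suffices to show that $a(\tau)\to+\infty$ as $\tau\uparrow\tau^\sharp$ for some finite $\tau^\sharp$: this forces $\tau_{\max}\le\tau^\sharp<\infty$, and the standard characterization of the maximal existence time then yields $\limsup_{\tau\to\tau_{\max}}\|w(\cdot,\tau)\|_{L^\infty}=\infty$, so $\tau_{\sf b}=\tau_{\max}\in(\tau_1,\infty)$ works.

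\emph{Step 1 (positivity and exponential growth).} By \eqref{eq_4.7}, $w(z,\tau)>-\epsilon(\tau)$ with $\epsilon(\tau)=e^{-\frac{3\tau}{4(p-1)}}$. Where $w\ge0$, convexity of $s\mapsto s^p$ on $[0,\infty)$ (true for every $p>1$) gives $N=w^p-\kappa^p-p\kappa^{p-1}(w-\kappa)\ge0$; where $-\epsilon<w<0$, one computes $N=-|w|^p+(p-1)\kappa^p+p\kappa^{p-1}|w|>(p-1)\kappa^p-\epsilon^p\ge-\epsilon^p$. Thus $N(z,\tau)\ge-\epsilon(\tau)^p$ for all $z$, hence $\langle N,H_0\rangle_\rho\ge-{\sf k}_0^{-n}\epsilon(\tau)^p$ and $\frac{da}{d\tau}\ge a-{\sf k}_0^{-n}\epsilon(\tau)^p$ for $\tau\ge\tau_1$. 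Since $\kappa^{p-1}=(p-1)^{-1}$, the threshold in \eqref{eq_4.8} equals $\frac{4p}{(p-1){\sf k}_0^n}\epsilon(\tau_1)$; because $\epsilon(\tau_1)^{p-1}<1$ (as $\tau_1\ge\tau_*>0$), multiplying the differential inequality by $e^{-\tau}$ and integrating from $\tau_1$ shows that this threshold comfortably forces $a(\tau)\ge\tfrac34\,a(\tau_1)\,e^{\tau-\tau_1}>0$ for all $\tau\in[\tau_1,\tau_{\max})$. In particular $a(\tau)\to+\infty$, so there is $\tau_2\in[\tau_1,\tau_{\max})$ with $a(\tau_2)\ge a_0$, where $a_0=a_0(p,n)$ is the large constant fixed in Step 2.

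\emph{Step 2 (superlinear growth).} Take $\tau_*=\tau_*(p,n)$ large enough that $\epsilon(\tau_*)$ is as small as needed below, and recall $\tau_2\ge\tau_1\ge\tau_*$. From $w>-\epsilon$ one gets the pointwise bound $|w|^{p-1}w\ge(w_+)^p-\epsilon^p$ with $w_+=\max\{w,0\}$; applying Jensen's inequality to the convex map $t\mapsto t^p$ and the probability measure $\rho\,dz\big/\!\int_{\R^n}\rho$, and using $\int_{\R^n}w\rho\,dz\big/\!\int_{\R^n}\rho=\kappa+{\sf k}_0^n a$ and $\langle1,H_0\rangle_\rho={\sf k}_0^{-n}$, one obtains, for $a\ge a_0>0$,
\begin{align*}
\langle N,H_0\rangle_\rho
\ge
{\sf k}_0^{-n}\Bigl[(\kappa+{\sf k}_0^n a)^p-\kappa^p-p\kappa^{p-1}{\sf k}_0^n a\Bigr]-{\sf k}_0^{-n}\epsilon^p .
\end{align*}
The bracket is $\ge\tfrac12({\sf k}_0^n a)^p$ once ${\sf k}_0^n a$ exceeds a constant depending only on $p$, and the last term is $\le\tfrac14{\sf k}_0^{n(p-1)}a^p$ for $a\ge a_0$ after enlarging $\tau_*$; choosing $a_0$ so that both hold for $a\ge a_0$, we get $\frac{da}{d\tau}\ge a+\langle N,H_0\rangle_\rho\ge c_*a^p$ on $[\tau_2,\tau_{\max})$ with $c_*=\tfrac14{\sf k}_0^{n(p-1)}>0$ (the right-hand side being positive, $a$ stays $\ge a_0$ by a continuation argument). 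Comparison with $\dot y=c_*y^p$, $y(\tau_2)=a_0$, gives $a(\tau)\ge\bigl(a_0^{-(p-1)}-c_*(p-1)(\tau-\tau_2)\bigr)^{-1/(p-1)}$, which tends to $+\infty$ as $\tau\uparrow\tau_2+\frac{a_0^{-(p-1)}}{c_*(p-1)}$. Thus $a$ blows up in finite time, and the reduction in the first paragraph concludes the proof.

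\emph{Main obstacle.} The heart of the matter is Step 2: pulling a genuinely superlinear term $\sim a^p$ out of $\langle N,H_0\rangle_\rho$ by convexity/Jensen, and checking that the sign-indefinite remainder — the $-{\sf k}_0^{-n}\epsilon^p$ from the thin region where $w<0$, together with the lower-order pieces of the bracket — is dominated by it for $a$ and $\tau$ large. This is precisely what turns the mere exponential growth of Step 1 into finite-time blowup. The other ingredients (the scalar equation for $a$ with its vanishing boundary term, the comparison estimate \eqref{eq_4.7}, and the passage from blowup of $a$ to blowup of $\|w\|_{L^\infty}$) are routine.
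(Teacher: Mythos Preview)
Your proof is correct and follows essentially the same strategy as the paper: project onto $H_0$ to get $a'=a+\langle N,H_0\rangle_\rho$, use \eqref{eq_4.7} to control the region $\{w<0\}$, and apply Jensen's inequality to extract a superlinear term $\sim a^p$ forcing finite-time blowup of $a$. The only cosmetic difference is in the convexity step: the paper builds a globally convex modification $\tilde N$ (equal to $N$ on $\{w>0\}$, quadratic on $\{w<0\}$) and applies Jensen once to $\tilde N$, whereas you bound $|w|^{p-1}w\ge(w_+)^p-\epsilon^p$ and apply Jensen to $t\mapsto t^p$ on $w_+$, then use $\int w_+\ge\int w$; both routes give the same differential inequality.
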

 \begin{proof}
 We put
 $W(z,\tau)
 =
 w(z,\tau)-\kappa$.
 The function $W(z,\tau)$ satisfies
 \begin{align}
 \label{eq_4.9}
 W_\tau
 =
 A_z
 W
 +
 W
 +
 N,
 \end{align}
 here $N$ is defined by
 \begin{align*}
 N(W)
 =
 |\kappa+W|^{p-1}
 (\kappa+W)
 -
 \kappa^p
 -
 p\kappa^{p-1}W.
 \end{align*}
 We take an inner product
 $\langle\cdot,H_0\rangle_\rho$ in \eqref{eq_4.9},
 we get
 \begin{align}
 \label{eq_4.10}
 \tfrac{d}{d\tau}
 \langle W,&\,H_0\rangle_\rho
 =
 \langle W,\,H_0\rangle_\rho
 +
 \langle N,H_0 \rangle_\rho.
 \end{align}
 We now define a continuous function $\tilde N(s):\R\to\R$ by
 \begin{align*}
 \tilde N(s)
 =
 \begin{cases}
 |\kappa+s|^{p-1}
 (\kappa+s)
 -
 \kappa^p
 -
 p\kappa^{p-1}s
 & \text{if } s>-\kappa,
 \\
 (p-1)\kappa^p-2p\kappa^{p-1}(\kappa+s)+(\kappa+s)^2
 & \text{if } s<-\kappa.
 \end{cases}
 \end{align*}
 Using \eqref{eq_4.7},
 we can compute the difference between $N(W)$ and $\tilde N(W)$.
 \begin{align}
 \nonumber
 N(W)&-\tilde N(W)
 =
 |\kappa+W|^{p-1}
 (\kappa+W)
 -
 \kappa^p
 -
 p\kappa^{p-1}W
 \\
 \nonumber
 &\quad
 -
 (p-1)\kappa^p+2p\kappa^{p-1}(\kappa+W)-(\kappa+W)^2
 \\
 \nonumber
 &=
 |\kappa+W|^{p-1}
 (\kappa+W)
 +
 p\kappa^{p-1}(\kappa+W)-(\kappa+W)^2
 \\
 \nonumber
 &=
 |w|^{p-1}
 w
 +
 p\kappa^{p-1}w
 -
 w^2
 \\
 \nonumber
 &>
 -
 e^{-\frac{3p\tau}{4(p-1)}}
 -
 p\kappa^{p-1}
 e^{-\frac{3\tau}{4(p-1)}}
 -
 e^{-\frac{3p\tau}{2(p-1)}}
 \\
 \label{eq_4.11}
 &>
 -2p\kappa^{p-1}
 e^{-\frac{3\tau}{4(p-1)}}
 \end{align}
 for
 $z\in\Omega=\{z\in\R^n;W(z,\tau)<-\kappa\}$ and large $\tau$.
 Therefore
 \eqref{eq_4.10} - \eqref{eq_4.11} imply
 \begin{align*}
 \tfrac{d}{d\tau}
 \langle W,H_0\rangle_\rho
 >
 \langle W,H_0 \rangle_\rho
 +
 \
 \langle \tilde{N}(W),H_0 \rangle_\rho
 -
 2p\kappa^{p-1}
 e^{-\frac{3\tau}{4(p-1)}}.
 \end{align*}
 We easily check that $\tilde N(s)$ is convex on $\R$.
 Put $\sigma=\int_{\R^n}\rho dz$.
 We apply the Jensen inequality to get
 \begin{align}
 \nonumber
 \tfrac{d}{d\tau}
 \langle W,H_0\rangle_\rho
 &>
 \langle W,H_0 \rangle_\rho
 +
 H_0
 \sigma
 \int_{\R^n}\tilde{N}(W)\tfrac{\rho dz}{\sigma}
 -
 \langle
 2p\kappa^{p-1}
 e^{-\frac{3\tau}{4(p-1)}},H_0\rangle_\rho
 \\
 \nonumber
 &>
 \langle W,H_0 \rangle_\rho
 +
 H_0
 \sigma
 \tilde N\left(\int_{\R^n}W\tfrac{\rho dz}{\sigma}\right)
 -
 \tfrac{2p\kappa^{p-1}}{H_0}
 e^{-\frac{3\tau}{4(p-1)}}
 \\
 \label{eq_4.12}
 &>
 \langle W,H_0 \rangle_\rho
 +
 H_0
 \sigma
 \tilde N\left(\tfrac{\langle W,H_0\rangle_\rho}{\sigma H_0}\right)
 -
 \tfrac{2p\kappa^{p-1}}{H_0}
 e^{-\frac{3\tau}{4(p-1)}}.
 \end{align}
 Since $\tilde N(s)>0$ for $s>0$,
 it holds from \eqref{eq_4.12} that
 \[
 \tfrac{d}{d\tau}
 \langle W(\tau),H_0 \rangle_\rho
 >
 \langle W(\tau),H_0 \rangle_\rho
 -
 \tfrac{2p\kappa^{p-1}}{H_0}
 e^{-\frac{3\tau}{4(p-1)}}
 \qquad
 \text{for }
 \tau>\tau_1.
 \]
 Integrating over $\tau\in(\tau_1,\tau)$,
 we get
 \begin{align*}
 \langle W(\tau)&,H_0 \rangle_\rho
 >
 e^{\tau-\tau_1}
 (
 \langle W(\tau_1),H_0 \rangle_\rho
 -
 \tfrac{2p\kappa^{p-1}}{H_0}
 \tfrac{e^{-\frac{3\tau_1}{4(p-1)}}}{1+\frac{3}{4(p-1)}}
 )
 \\
 &>
 e^{\tau-\tau_1}
 (
 \langle W(\tau_1),H_0 \rangle_\rho
 -
 \tfrac{2p\kappa^{p-1}}{H_0}
 e^{-\frac{3\tau_1}{4(p-1)}}
 )
 \qquad
 \text{for }
 \tau>\tau_1.
 \end{align*}
 Hence
 we deduce from \eqref{eq_4.8} that
 $\langle W(\tau),H_0 \rangle_\rho
 >\frac{2p\kappa^{p-1}}{H_0}e^{-\frac{3\tau_1}{4(p-1)}}e^{\tau-\tau_1}$
 for $\tau>\tau_1$.
 Since $\tilde N(s)>\frac{s^p}{2}$ for large $s>1$,
 from \eqref{eq_4.12},
 we conclude that
 there exists $\tau_{\sf b}>\tau_1$ such that
 \begin{align*}
 \lim_{\tau\to\tau_{\sf b}}
 \langle W(\tau),H_0 \rangle_\rho
 =
 \infty.
 \end{align*}
 The proof is finished. 
 \end{proof}

 We next consider
 \begin{align}
 \label{eq_4.13}
 w_\tau
 =
 \Delta_zw
 -
 \tfrac{z}{2}
 \cdot
 \nabla_zw
 -
 \tfrac{w}{p-1}
 +
 |w|^{p-1}
 w&
 +
 {\bf c}(\tau)
 \cdot
 \nabla_zw
 \\
 \nonumber
 &
 \text{for }
 z\in\R^n,\ \tau>\tau_0.
 \end{align}
 We here assume that
 \begin{itemize}
 \item $\tau_2>\tau_1>\tau_0$,
 \item ${\bf c}(\tau)\in C([\tau_0,\tau_2];\R^n)\cap L^\infty(\tau_0,\tau_2)$.
 \end{itemize}
 \begin{pro}\label{proposition_4.3}
 Assume $n\geq1$ and $p>1$.
 Let
 $w(z,\tau)$
 be a solution of \eqref{eq_4.13}
 satisfying
 \begin{align}
 \label{eq_4.14}
 \langle w(z,&\,\tau),H_{1,J}(z)\rangle_\rho=0
 \qquad \text{\rm for any }
 \tau\in(\tau_1,\tau_2)
 \text{ \rm and all } J.
 \end{align}
 We decompose $W(z,\tau)=w(z,\tau)-\kappa$ as
 \begin{align*}
 w(z,\tau)-\kappa
 &=
 b_0(\tau)
 H_0(z)
 +
 {\bf b}_2(\tau)
 \cdot
 {\bf H}_2(z)
 +
 w^\bot(z,\tau),
 \end{align*}
 here the symbol ${}^\bot$ implies
 \begin{align*}
 \langle w^\bot(z), H_0(z)\rangle_\rho
 &=
 \langle w^\bot(z), H_{1,J}(z)\rangle_\rho
 =
 \langle w^\bot(z), H_{2,IJ}(z)\rangle_\rho
 \\
 &=0
 \qquad
 \text{\rm for all } J,\ (I,J).
 \end{align*}
 Assume that
 \begin{enumerate}[\rm({k}1) ]
 \setlength{\leftskip}{5mm}
 \item[\rm({k}0) ] 
 $|b_0(\tau)|<\frac{4{\nu}^2}{pH_0\tau}$
 \quad {\rm for }
 $\tau\in[\tau_1,\tau_2]$,
 \item 
 $|\frac{1}{b_{2,JJ}(\tau_1)}+c_p\tau_1|<{\nu}c_p\tau_1$
 \quad
 {\rm for all} $J$,
 \item 
 $|b_{2,IJ}(\tau_1)|<{\nu}^2\tau_1^{-1}$
 \quad
 {\rm and all} $I<J$,
 \item 
 $\|w^\bot(\tau_1)\|_\rho<{\nu}^4\tau_1^{-1}$,
 \item 
 $\|\pa_{z_J}w^\bot(\tau_1)\|_\rho<{\nu}^4\tau_1^{-1}$
 \quad
 {\rm for all} $J$,
 \item 
 $-e^{-\frac{3\tau_1}{4(p-1)}}<w(z,\tau_1)<\kappa+\tau_1^{-\frac{1}{2}}$
 \quad
 {\rm for}
 $z\in\R^n$,
 \item 
 $w(z,\tau_1)<\kappa$
 \quad {\rm for} $|z|>2\sqrt{n}$,
 \item 
 $\dis\sup_{|z|<8R_1}|w(z,\tau_1)-\kappa|<
 \sup_{|z|<8R_1}
 \tfrac{4|\sum_{J=1}^nH_{2,JJ}(z)|}{c_p\tau_1}$,
 \item 
 $\dis\sup_{|z|<16\sqrt{n}}|w^\bot(z,\tau_1)|<{\nu}^4\tau_1^{-1}$.
 \end{enumerate}
 There exists ${\nu}_1\in(0,1)$
 such that
 for any $\nu\in(0,\nu_1)$
 there exist $R_1=R_1(\nu)>1$
 and
 $s_1=s_1(\nu,R_1)>1$ such that
 if
 $\tau_1>s_1$,
 then
 it holds that
 \begin{enumerate}[\rm({K}1)]
 \setlength{\leftskip}{5mm}
 \item 
 $|\frac{1}{b_{2,JJ}(\tau)}+c_p\tau|<{\nu}c_p\tau$
 \quad {\rm for} $\tau\in(\tau_1,\tau_2)$
 {\rm and all} $J$,
 \item 
 $|b_{2,IJ}(\tau)|<{\nu}^2\tau^{-1}$
 \quad {\rm for} $\tau\in(\tau_1,\tau_2)$
 {\rm and all} $I<J$,
 \item 
 $\|w^\bot(\tau)\|_\rho<2{\nu}^4\tau^{-1}$
 \quad {\rm for} $\tau\in(\tau_1,\tau_2)$,
 \item 
 $\|\pa_{z_J}w^\bot(\tau)\|_\rho<2{\nu}^4\tau^{-1}$
 \quad {\rm for} $\tau\in(\tau_1,\tau_2)$
 {\rm and all} $J$,
 \item 
 $-e^{-\frac{3\tau}{4(p-1)}}<w(z,\tau)<\kappa+\tau^{-\frac{1}{2}}$
 \quad {\rm for}
 $(z,\tau)\in\R^n\times[\tau_1,\tau_2]$,
 \item 
 $w(z,\tau)<\kappa$
 \quad
 {\rm for} $|z|>2\sqrt{n}$
 {\rm and} $\tau\in[\tau_1,\tau_2+\delta_1]$
 {\rm for some} $\delta_1\in(0,1)$
 \\
 {\rm(}$\delta_1$ may depend on $w(z,\tau)${\rm)},
 \item 
 $\dis\sup_{|z|<4R_1}|w(z,\tau)-\kappa|<\tau^{-\frac{3}{4}}$
 \quad {\rm for} $\tau\in(\tau_1,\tau_2)$,
 \item 
 $\dis\sup_{|z|<8\sqrt{n}}|w^\bot(z,\tau)|<{\nu}^2\tau^{-1}$
 \quad {\rm for} $\tau\in(\tau_1,\tau_2)$.
 \end{enumerate}
 \end{pro}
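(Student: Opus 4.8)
The plan is to run a continuity (trapping) argument. Fix $\nu\in(0,\nu_1)$, then $R_1=R_1(\nu)>1$ and $s_1=s_1(\nu,R_1)>1$ (both fixed in the course of the proof), assume $\tau_1>s_1$, and set
\[
\tau^*=\sup\{\,\tau\in(\tau_1,\tau_2]\ :\ \text{(K1)--(K8) hold on }[\tau_1,\tau)\,\}.
\]
The strict inequalities in (k1)--(k9), the continuity of $\tau\mapsto w(\cdot,\tau)$ in $H_\rho^1(\R^n)$ and of $b_0,b_{2,JJ},b_{2,IJ},w^\bot$, and the built-in margins (e.g.\ (k3)--(k5) carry $\nu^4\tau_1^{-1}$ whereas (K3)--(K4) permit $2\nu^4\tau^{-1}$) give $\tau^*>\tau_1$. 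The goal is $\tau^*=\tau_2$; if not, at $\tau=\tau^*$ one of (K1)--(K8) holds with equality, and the contradiction comes from showing each bound is in fact strictly interior at $\tau^*$. The mechanisms are: for (K1), the admissible window $\nu c_p\tau$ \emph{grows} linearly while $\tfrac1{b_{2,JJ}(\tau)}+c_p\tau$ only drifts by $O(\nu^2c_p^2)$ per unit $\tau$, so at an exit time its derivative is dominated by $\tfrac{d}{d\tau}(\nu c_p\tau)=\nu c_p>0$ once $\nu$ is small relative to $c_p$; for (K2), (K3)--(K4), (K8), the off-diagonal modes, $w^\bot$ and $\nabla_zw^\bot$ are \emph{damped} (with rate $\gtrsim\tau^{-1}$ for $b_{2,IJ}$, rate $\gtrsim1$ for $w^\bot$ via the spectral gap $\tfrac12$ of $-(A_z+1)$ on $(V_0\oplus V_1\oplus V_2)^\bot$), hence pushed below their thresholds; (k0) is not re-derived here, being a standing hypothesis whose possible breakdown is the content of Proposition \ref{proposition_4.2} and whose maintenance is that of Proposition \ref{proposition_4.4}. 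Conclusions (K5)--(K7) follow a posteriori from (K1)--(K4), (K8) and elementary comparison arguments.

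\emph{Modulation and the modal ODEs.} Since $H_{1,J}\in V_1$ is an eigenfunction of $A_z+1$ with eigenvalue $\tfrac12$, differentiating the orthogonality \eqref{eq_4.14} in $\tau$ and using \eqref{eq_4.13} gives, with $W=w-\kappa$ and for every $J$,
\[
0=\tfrac12\langle W,H_{1,J}\rangle_\rho+\langle N,H_{1,J}\rangle_\rho+\langle{\bf c}(\tau)\cdot\nabla_zw,H_{1,J}\rangle_\rho=\langle N,H_{1,J}\rangle_\rho+\langle{\bf c}(\tau)\cdot\nabla_zw,H_{1,J}\rangle_\rho .
\]
Computing $\langle\pa_{z_k}H_{2,JJ},H_{1,J}\rangle_\rho$ exhibits the leading term on the right as $\propto c_J b_{2,JJ}$, whereas the quadratic part of $N$ is even in each $z_k$ and $H_{1,J}$ is odd, so $\langle N,H_{1,J}\rangle_\rho=O(\|w^\bot\|_\rho|{\bf b}_2|+|{\bf b}_2|^3)$; since $|b_{2,JJ}|\sim(c_p\tau)^{-1}$ on $[\tau_1,\tau^*)$ this yields $|{\bf c}(\tau)|\lesssim\nu^4\tau^{-1}$, so the drift term in \eqref{eq_4.13} is slaved and negligible at every level below. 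Projecting the $W$-equation onto $H_0,H_{2,JJ},H_{2,IJ}$ and onto $P^\bot$, the $L_\rho^2$-projection onto $(V_0\oplus V_1\oplus V_2)^\bot$, gives $\dot b_0=b_0+\langle N+{\bf c}\cdot\nabla_zw,H_0\rangle_\rho$, $\dot b_{2,JJ}=\langle N+{\bf c}\cdot\nabla_zw,H_{2,JJ}\rangle_\rho$, $\dot b_{2,IJ}=\langle N+{\bf c}\cdot\nabla_zw,H_{2,IJ}\rangle_\rho$, and $w^\bot_\tau=(A_z+1)w^\bot+P^\bot(N+{\bf c}\cdot\nabla_zw)$ with $(A_z+1)\le-\tfrac12$ on $\mathrm{Range}\,P^\bot$. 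Replacing $N$ by the convex modification $\tilde N$ of the proof of Proposition \ref{proposition_4.2} (legitimate on $\{W<-\kappa\}$ thanks to \eqref{eq_4.7}) and writing $\tilde N(s)=\tfrac{p}{2\kappa}s^2+O(|s|^3)$, the key structural fact is that the triple Gaussian moments $\langle H_{2,KK}H_{2,LL},H_{2,JJ}\rangle_\rho$ vanish unless $K=L=J$ (and $\langle H_{2,LL}H_{2,IK},H_{2,JJ}\rangle_\rho=0$ by parity); hence the diagonal mode \emph{decouples},
\[
\dot b_{2,JJ}=c_p\,b_{2,JJ}^2+O\big(\nu^2\tau^{-2}\big),\qquad\text{so}\qquad\tfrac{d}{d\tau}\Big(\tfrac1{b_{2,JJ}}+c_p\tau\Big)=O\big(\nu^2 c_p^2\big),
\]
where $c_p$ of \eqref{eq_2.7} is exactly $\tfrac{p}{2\kappa}\langle H_{2,JJ}^3\rangle_\rho$, so that the attracting profile is $-\tfrac1{c_p\tau}\sum_JH_{2,JJ}$, i.e.\ \eqref{eq_1.5}. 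Integrating and using (k1) gives (K1) once $\nu<(Cc_p)^{-1}$ and $\tau_1>s_1$.

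\emph{Off-diagonal modes, $w^\bot$, the nonlinearity and the pointwise bounds.} For $b_{2,IJ}$ the leading projection $\langle W^2,H_{2,IJ}\rangle_\rho$ vanishes by parity, and the next term is the damping $\tfrac{p}{2\kappa}\cdot2\bar b\,\langle(\sum_LH_{2,LL})H_{2,IJ}^2\rangle_\rho\,b_{2,IJ}\sim-\tfrac{2}{\tau}b_{2,IJ}$ (here $\bar b\approx-(c_p\tau)^{-1}<0$ and the inner product is $>0$), which forces $b_{2,IJ}=O(\tau^{-2})$ and hence (K2); the same mechanism bounds the spread among the $b_{2,JJ}$. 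For $w^\bot$, the energy identity together with the spectral gap gives $\tfrac{d}{d\tau}\|w^\bot\|_\rho^2\le-\|w^\bot\|_\rho^2+C\|P^\bot(N+{\bf c}\cdot\nabla_zw)\|_\rho\|w^\bot\|_\rho$ with source $O(\tau^{-2})+O(\tau^{-1}\|w^\bot\|_\rho)$ (note $P^\bot$ kills the $V_1$-valued part ${\bf c}\cdot\nabla_z(\,{\bf b}_2\cdot{\bf H}_2)$ of the drift), which absorbs to $\tfrac{d}{d\tau}\|w^\bot\|_\rho\le-\tfrac14\|w^\bot\|_\rho+C\tau^{-2}$ and yields (K3); a parallel $H_\rho^1$ estimate for $\nabla_zw^\bot$ (using Lemma \ref{lem_3.1} to control the weight produced by differentiating the nonlinearity) gives (K4). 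Throughout, $\langle N,\cdot\rangle_\rho$ and $\|P^\bot N\|_\rho$ are bounded by $\|W\|_\rho^2$ up to harmless remainders via $\tilde N$/\eqref{eq_4.7} near $\{W<-\kappa\}$ and, for $|z|$ large, by splitting each integral at $|z|=R$ (resp.\ $|z|\sim\tau^\sigma$) and absorbing the tail by the a priori $L^\infty$ bound together with Lemmas \ref{lem_3.1} and \ref{lem_3.3}. The pointwise conclusions then follow: (K8) and (K7) from (K3)--(K4) (resp.\ from $\|W\|_\rho\lesssim\tau^{-1}$) via the $L_\rho^2\to L^\infty$ smoothing of Lemma \ref{lem_3.2}, using the data (k8)--(k9) at $\tau_1$; for (K5)--(K6), the lower bound is \eqref{eq_4.7}, while for $|z|>2\sqrt n$ the decomposition $w=\kappa+b_0H_0+{\bf b}_2\cdot{\bf H}_2+w^\bot$ with (K1)--(K4), (K8) and the strict negativity of $-\tfrac1{c_p\tau}\sum_JH_{2,JJ}(z)=-\tfrac{{\sf k}_0^n}{2\sqrt2\,c_p\tau}(|z|^2-2n)$ on $|z|>\sqrt{2n}$ gives $w<\kappa$ on every fixed compact subset of $\{|z|>\sqrt{2n}\}$, in particular at $|z|=2\sqrt n$; then the strong maximum principle in $\{|z|>2\sqrt n\}$ (using (k7) as interior data and that $w\equiv\kappa$ is a stationary solution of \eqref{eq_4.13}) propagates $w<\kappa$ to $\tau_2+\delta_1$, and the upper bound in (K5) follows from (K7) on $\{|z|<4R_1\}$ and from (K6) on $\{|z|\ge4R_1\}\subset\{|z|>2\sqrt n\}$.

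\emph{Main obstacle.} The delicate point is the \emph{simultaneous, quantitative} closure of all eight bounds in the last two steps. For (K1) one must compute the triple moments precisely enough to identify the Riccati coefficient as exactly the $c_p$ of \eqref{eq_2.7} and to see that the decoupling forces the error in $\dot b_{2,JJ}$ down to $O(\nu^2\tau^{-2})$, so that choosing $\nu$ small relative to $c_p$ makes the exit-time derivative comparison strict; the analogous sign facts ($\langle(\sum_LH_{2,LL})H_{2,IJ}^2\rangle_\rho>0$ and the spectral gap) are what give the genuine damping behind (K2)--(K4), (K8). Interwoven with this is the constant passage between $L_\rho^2$ and pointwise/$L^\infty$ control, needed both for the quadratic nonlinearity (where $N$ and $\tilde N$ differ, i.e.\ near $\{w<0\}$) and for (K5)--(K8); the far field $|z|\gg1$, where the weight $\rho$ erases $L_\rho^2$ information, is the recurrent source of error, and Lemmas \ref{lem_3.1}--\ref{lem_3.3} are exactly the tools that absorb it.
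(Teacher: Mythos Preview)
Your overall architecture matches the paper's: a continuity (bootstrap) argument on (K1)--(K8), modulation of ${\bf c}(\tau)$ via the orthogonality \eqref{eq_4.14}, the Riccati ODE for $b_{2,JJ}$ with coefficient $c_p$, a damped ODE for $b_{2,IJ}$, a spectral-gap energy estimate for $w^\bot$, and comparison for (K5)--(K6). The identification of the triple moment $\tfrac{p}{2\kappa}\langle H_{2,JJ}^3\rangle_\rho=c_p$ and the parity observations are correct.

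There is, however, a genuine ordering gap. You place (K7) and (K8) \emph{last}, obtained ``from (K3)--(K4) via the $L_\rho^2\to L^\infty$ smoothing of Lemma~\ref{lem_3.2}''. Two problems: (i) Lemma~\ref{lem_3.2} bounds the \emph{linear} semigroup $e^{A_z\tau}$, not solutions of the nonlinear $W$-equation, so it does not directly deliver (K7) or (K8); (ii) more seriously, the error $O(\nu^2\tau^{-2})$ you claim in $\dot b_{2,JJ}-c_pb_{2,JJ}^2$, and the bound on $\|N\|_\rho$ your energy inequality needs, already \emph{consume} (K7). The cubic remainder $\langle (N-\tfrac{p}{2\kappa}W^2){\bf 1}_{|z|<R_1},H_{2,JJ}\rangle_\rho$ is $O(\langle|W|^3,|z|^2\rangle_\rho)$; with only $|W|\le2\kappa$ and $\|W\|_{H_\rho^1}\lesssim\tau^{-1}$ this is merely $O(\tau^{-2})$, whereas the paper gets $O(\tau^{-5/2})$ (see \eqref{eq_4.34}) by inserting $|W|<\tau^{-3/4}$ on $|z|<R_1$. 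Likewise, the key step \eqref{eq_4.47} in the $\|N\|_\rho$ estimate \eqref{eq_4.48} is exactly this pointwise split. The paper breaks the circle by establishing (L7) \emph{first}, using only the bootstrap bounds (l1)--(l4), via local parabolic $L^\infty$ regularity applied in a moving frame $W_{\bf a}(z,\tau)=W(z+e^{(\tau-\tau_1)/2}{\bf a},\tau)$ (equations \eqref{eq_4.24}--\eqref{eq_4.30}); the same device on the $W^\bot$-equation \eqref{eq_4.60} yields (L8). Only after these pointwise bounds are in hand does the Riccati/energy machinery run. Your ingredients (split at $|z|=R$, tail absorption via Lemmas~\ref{lem_3.1}, \ref{lem_3.3}) are right, but the local-$L^\infty$ step must precede, not follow, the ODE and energy analysis.

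A minor point: the damped equation $|\dot b_{2,IJ}+\tfrac{2}{\tau}b_{2,IJ}|\lesssim\nu^3\tau^{-2}$ integrates only to $|b_{2,IJ}|\lesssim\nu^2\tau^{-1}$ (the source prevents the homogeneous $\tau^{-2}$ decay), which is exactly (K2); your claim $b_{2,IJ}=O(\tau^{-2})$ is stronger than what the argument gives and is not needed.
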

 \begin{proof}
 Let $\nu\in(0,\nu_1)$.
 Assume that
 there exists
 $\tilde\tau_2\in(\tau_1,\tau_2)$
 such that
 \begin{enumerate}[\rm({l}1)]
 \setlength{\leftskip}{5mm}
 \item 
 $|\frac{1}{b_{2,JJ}(\tau)}+c_p\tau|<2{\nu}c_p\tau$
 \quad {\rm for} $\tau\in(\tau_1,\tilde\tau_2)$
 {\rm and all} $J$,
 \item 
 $|b_{2,IJ}(\tau)|<2{\nu}^2\tau^{-1}$
 \quad {\rm for} $\tau\in(\tau_1,\tilde\tau_2)$
 {\rm and all} $I<J$,
 \item 
 $\|w^\bot(\tau)\|_\rho<2{\nu}^4\tau^{-1}$
 \quad {\rm for} $\tau\in(\tau_1,\tilde\tau_2)$,
 \item 
 $\|\pa_{z_J}w^\bot(\tau)\|_\rho<2{\nu}^4\tau^{-1}$
 \quad {\rm for} $\tau\in(\tau_1,\tilde\tau_2)$
 {\rm and all} $J$,
 \item 
 $-e^{-\frac{3\tau}{4(p-1)}}<w(z,\tau)<\kappa+2\tau^{-\frac{1}{2}}$
 \quad {\rm for}
 $(z,\tau)\in\R^n\times[\tau_1,\tilde\tau_2]$,
 \item 
 $w(z,\tau)<\kappa$
 \quad
 {\rm for} $|z|>2\sqrt{n}$
 {\rm and} $\tau\in[\tau_1,\tilde\tau_2]$.
 \end{enumerate}
 We now show
 \begin{enumerate}[\rm({L}1)]
 \setlength{\leftskip}{5mm}
 \item 
 $|\frac{1}{b_{2,JJ}(\tau)}+c_p\tau|<{\nu}c_p\tau$
 \quad {\rm for} $\tau\in(\tau_1,\tilde\tau_2)$
 {\rm and all} $J$,
 \item 
 $|b_{2,IJ}(\tau)|<{\nu}^2\tau^{-1}$
 \quad {\rm for} $\tau\in(\tau_1,\tilde\tau_2)$
 {\rm and all} $I<J$,
 \item 
 $\|w^\bot(\tau)\|_\rho<\frac{3}{2}{\nu}^4\tau^{-1}$
 \quad {\rm for} $\tau\in(\tau_1,\tilde\tau_2)$,
 \item 
 $\|\pa_{z_J}w^\bot(\tau)\|_\rho<2{\nu}^4\tau^{-1}$
 \quad {\rm for} $\tau\in(\tau_1,\tilde\tau_2)$
 {\rm and all} $J$,
 \item 
 $-e^{-\frac{3\tau}{4(p-1)}}<w(z,\tau)<\kappa+\tau^{-\frac{1}{2}}$
 \quad {\rm for} $(z,\tau)\in\R^n\times(\tau_1,\tilde\tau_2)$,
 \item 
 there exists $\delta_1\in(0,1)$ such that
 $w(z,\tau)<\kappa$
 {\rm for} $|z|>2\sqrt{n}$
 \text{\rm and} $\tau\in(\tau_1,\tilde\tau_2+\delta_1)$,
 \item 
 $\dis\sup_{|z|<4R_1}|w(z,\tau)-\kappa|<\tfrac{1}{2}\tau^{-\frac{3}{4}}$
 \quad {\rm for} $(z,\tau)\in\R^n\times(\tau_1,\tilde\tau_2)$,
 \item 
 $\dis\sup_{|z|<8\sqrt{n}}|w^\bot(z,\tau)|<\tfrac{{\nu}^2}{2}\tau^{-1}$
 \quad {\rm for} $(z,\tau)\in\R^n\times(\tau_1,\tilde\tau_2)$.
 \end{enumerate}
 If (L1) - (L8) are obtained from the assumptions (l1) - (l6),
 the proof of Proposition \ref{proposition_4.3} is completed.
 \begin{itemize}
 \item 
 Throughout this proof,
 we denote by $C$ a generic positive constant
 which may vary from line to line.
 The constant $C$ does not depend on $\nu_1,\nu,R_1,s_1,\tau_1,\tau_2$,
 \item 
 Furthermore
 we assume that
 $\nu_1\in(0,1)$ is small enough,
 and
 $R_1,s_1$ are large enough
 in this proof.
 \end{itemize}
 Put $W(z,\tau)=w(z,\tau)-\kappa$.
 The function $W(z,\tau)$ satisfies
 (see \eqref{eq_4.13})
 \begin{align}
 \label{eq_4.15}
 W_\tau
 =
 \Delta_zW
 -
 \tfrac{z}{2}
 \cdot
 \nabla_zW
 +
 W
 +
 N
 +
 {\bf c}(\tau)
 \cdot
 \nabla_zW,
 \end{align}
 here
 \begin{align*}
 N
 =
 |\kappa+W|^{p-1}(\kappa+W)-\kappa^p-p\kappa^{p-1}W.
 \end{align*}
 To derive estimates of ${\bf c}(\tau)$,
 we take an inner product $\langle\cdot,H_{1,J}\rangle_\rho$ in \eqref{eq_4.15}.
 From \eqref{eq_4.14},
 we get
 \begin{align}
 \label{eq_4.16}
 {\bf c}(\tau)\cdot
 \langle\nabla_zW,H_{1,J}\rangle_\rho
 =
 -
 \langle N,H_{1,J}\rangle_\rho.
 \end{align}
 We note that $|W(z,\tau)|<2\kappa$ for $\tau\in(\tau_1,\tilde\tau_2)$
 (see (l5)).
 Hence
 Lemma \ref{lemma_A.3} implies
 $|N|<CW^2$ for $\tau\in(\tau_1,\tilde\tau_2)$.
 From Lemma \ref{lem_3.1},
 the right hand side of \eqref{eq_4.16}
 is estimated as
 \begin{align}
 \nonumber
 |\langle N,\,H_{1,J} \rangle_\rho|
 &<
 C
 \langle
 W^2,|H_{1,J}|
 \rangle_\rho
 <
 C
 \langle
 W^2,|z|
 \rangle_\rho
 \\
 \label{eq_4.17}
 &<
 C
 \|W\|_\rho
 \|W\|_{H_\rho^1(\R^n)}.
 \end{align}
 Furthermore
 from (k0) and (l1) - (l4),
 we observe that
 \begin{align}
 \label{eq_4.18}
 \|W(z,\tau)\|_{H_\rho^1(\R^n)}
 <\tfrac{2n\|{\bf H}_{2,J}\|_{H_\rho^1(\R^n)}}{c_p\tau}
 \qquad
 \text{for }
 \tau\in(\tau_1,\tilde\tau_2).
 \end{align}
 Hence
 from \eqref{eq_4.17} - \eqref{eq_4.18},
 we obtain
 \begin{align}
 \label{eq_4.19}
 |\langle N,H_{1,J} \rangle_\rho|
 <
 C\tau^{-2}
 \qquad
 \text{for }
 \tau\in(\tau_1,\tilde\tau_2).
 \end{align}
 We now compute the left hand side of \eqref{eq_4.16}.
 A direct computation shows
 \begin{align*}
 \langle{\pa_{z_I}}W,H_{1,J}\rangle_\rho
 &=
 \int_{\R^n}
 \pa_{z_I}W(z,\tau)
 \cdot
 H_{1,J}(z)
 e^{-\frac{|z|^2}{4}}
 dz
 \\
 &=
 -
 \int_{\R^n}
 W(z,\tau)
 (
 \pa_{z_I}
 H_{1,J}(z)
 -
 \tfrac{z_I}{2}
 H_{1,J}(z)
 )
 e^{-\frac{|z|^2}{4}}
 dz
 \\
 &=
 \langle W,
 -\tfrac{{\sf k}_0^n}{\sqrt{2}}\delta_{IJ}
 +\tfrac{z_I}{2}\tfrac{{\sf k}_0^n}{\sqrt{2}}z_J\rangle_\rho
 \\
 &=
 \begin{cases}
 \langle W,H_{2,JJ}\rangle_\rho
 & \text{if } I=J,
 \\
 \tfrac{1}{\sqrt{2}}
 \langle W,H_{2,IJ}\rangle_\rho
 & \text{if } I<J.
 \end{cases}
 \end{align*}
 Hence
 it holds that
 \begin{align}
 \nonumber
 {\bf c}(\tau)\cdot
 \langle\nabla_zW,H_{1,1}\rangle_\rho
 &=
 c_1(\tau)
 b_{2,11}(\tau)
 +
 \tfrac{1}{\sqrt{2}}
 \sum_{I=2}^n
 c_I(\tau)b_{2,I1}(\tau),
 \\
 \label{eq_4.20}
 \vdots
 \hspace{20mm}
 &=
 \hspace{20mm}
 \vdots
 \\
 \nonumber
 {\bf c}(\tau)\cdot
 \langle\nabla_zW,H_{1,n}\rangle_\rho
 &=
 c_n(\tau)
 b_{2,nn}(\tau)
 +
 \tfrac{1}{\sqrt{2}}
 \sum_{I=1}^{n-1}
 c_I(\tau)b_{2,In}(\tau).
 \end{align}
 Relations \eqref{eq_4.20} and \eqref{eq_4.16} imply
 \begin{align*}
 \begin{pmatrix}
 b_{2,11} & \frac{b_{2,21}}{\sqrt{2}} & \frac{b_{2,31}}{\sqrt{2}} &
 \cdots & \frac{b_{2,n1}}{\sqrt{2}}
 \\
 \frac{b_{2,12}}{\sqrt{2}} & b_{2,22} & \frac{b_{2,32}}{\sqrt{2}} &
 \cdots & \frac{b_{2,n2}}{\sqrt{2}}
 \\
 \vdots & \vdots & \vdots & \cdots & \vdots
 \\
 \frac{b_{2,1n}}{\sqrt{2}} & \frac{b_{2,2n}}{\sqrt{2}} & \frac{b_{2,3n}}{\sqrt{2}} &
 \cdots & b_{2,nn}
 \end{pmatrix}
 {\bf c}(\tau)
 =
 -
 \begin{pmatrix}
 \langle N,H_{1,1}\rangle_\rho
 \\
 \langle N,H_{1,2}\rangle_\rho
 \\
 \vdots
 \\
 \langle N,H_{1,n}\rangle_\rho
 \end{pmatrix}.
 \end{align*}
 Hence from (l1) - (l2) and \eqref{eq_4.19},
 there exists $C>0$ such that
 \begin{align}
 \label{eq_4.21}
 |{\bf c}(\tau)|
 <
 C\tau^{-1}
 \qquad
 \text{for }
 \tau\in(\tau_1,\tilde\tau_2).
 \end{align}
 We now confirm (L1) - (L2).
 From \eqref{eq_4.15},
 we get
 \begin{align}
 \nonumber
 \tfrac{d}{d\tau}
 &b_{2,IJ}
 =
 \langle N,H_{2IJ} \rangle_\rho
 +
 {\bf c}(\tau)
 \cdot
 \langle \nabla_zW,H_{2IJ} \rangle_\rho
 \\
 \label{eq_4.22}
 &=
 \langle N,H_{2IJ} \rangle_\rho
 -
 \sum_{i=1}^n
 c_i(\tau)
 \langle W,\pa_{z_i}H_{2IJ}-\tfrac{z_i}{2}H_{2IJ}) \rangle_\rho.
 \end{align}
 Since
 $\pa_{z_i}H_{2IJ}\in V_1$
 and
 $\tfrac{z_i}{2}H_{2IJ}\in V_3$
 (see Section \ref{sec_2} for the definition of $V_k$),
 we verify that
 \begin{align}
 \nonumber
 |
 \sum_{i=1}^n
 &c_i(\tau)
 \underbrace{
 \langle W,\pa_{z_i}H_{2IJ}-\tfrac{z_i}{2}H_{2IJ}) \rangle_\rho
 }_{=\langle W,-\tfrac{z_i}{2}H_{2IJ}) \rangle_\rho}
 |
 \\
 \label{eq_4.23}
 &<
 C
 |{\bf c}(\tau)|
 \|W^\bot\|_\rho
 <
 C{\nu}^4
 \tau^{-2}
 \qquad
 \text{for }
 \tau\in(\tau_1,\tilde\tau_2).
 \end{align}
 To treat nonlinear terms in \eqref{eq_4.22},
 we here provide pointwise estimates for $W(z,\tau)$ in $|z|<4R_1$.
 The constant $R_1$ is the same as in (k7).
 Put
 \begin{align}
 \label{eq_4.24}
 W_{\bf a}(z,\tau)
 =
 W(z+e^{\frac{\tau-\tau_1}{2}}{\bf a},\tau)
 \qquad
 (|{\bf a}|<4R_1).
 \end{align}
 This function $W_{\bf a}(z,\tau)$ satisfies
 \begin{align*}
 \pa_\tau W_{\bf a}
 &=
 A_zW_{\bf a}
 +
 \underbrace{
 W_{\bf a}
 +
 \tfrac{N(W_{\bf a})}{W_{\bf a}}
 W_{\bf a}
 }_{=P(z,\tau)W_{\bf a}}
 +
 {\bf c}(\tau)\cdot\nabla_zW_{\bf a}
 \\
 &=
 A_zW_{\bf a}
 +
 P(z,\tau)
 W_{\bf a}
 +
 {\bf c}(\tau)\cdot\nabla_zW_{\bf a}.
 \end{align*}
 Since $|W_{\bf a}(z,\tau)|<2\kappa$ for $\tau\in(\tau_1,\tilde\tau_2)$ (see (l5)),
 we note that $|P(z,\tau)|<C$ for $\tau\in(\tau_1,\tilde\tau_2)$.
 A local boundary estimate for parabolic equations
 implies that there exists $C>0$ independent of ${\bf a}\in\R^n$ such that
 \begin{align}
 \nonumber
 &\sup_{\tau\in(\tau_1,\tau_1+1)}
 \sup_{|z|<1}
 |W_{\bf a}(z,\tau)|
 \\
 \label{eq_4.25}
 &<
 C
 (
 \sup_{\tau\in(\tau_1,\tau_1+1)}
 \|W_{\bf a}(z,\tau)\|_{L_z^2(|z|<2)}
 +
 \sup_{|z|<2}
 |W_{\bf a}(z,\tau_1)|
 ).
 \end{align}
 From the definition of $W_{\bf a}(z,\tau)$ (see \eqref{eq_4.24}) and (k7),
 we see that
 \begin{align}
 \nonumber
 &
 \sup_{|z|<2}
 |W_{\bf a}(z,\tau_1)|
 =
 \sup_{|z|<2}
 |W(z+{\bf a},\tau_1)|
 \\
 &<
 \sup_{|z|<8R_1}
 |W(z,\tau_1)|
 \label{eq_4.26}
 <
 \sup_{|z|<8R_1}
 \tfrac{4|\sum_{J=1}^nH_{2,JJ}(z)|}{c_p\tau_1}
 <
 \tfrac{CR_1^2}{\tau_1}.
 \end{align} 
 Furthermore
 we observe that
 \begin{align}
 \nonumber
 \|&W_{\bf a}(z,\tau)\|_{L_z^2(|z|<2)}^2
 =
 \int_{|z|<2}
 W_{\bf a}(z,\tau)^2
 dz
 \\
 \nonumber
 &=
 \int_{|z|<2}
 W(z+e^\frac{\tau-\tau_1}{2}{\bf a},\tau)^2
 e^{\frac{|z+e^\frac{\tau-\tau_1}{2}{\bf a}|^2}{4}}
 e^{-\frac{|z+e^\frac{\tau-\tau_1}{2}{\bf a}|^2}{4}}
 dz 
 \\
 \nonumber
 &<
 (
 \sup_{|z|<2}
 e^{\frac{|z+e^\frac{\tau-\tau_1}{2}{\bf a}|^2}{4}}
 )
 \int_{|z|<2}
 W(z+e^\frac{\tau-\tau_1}{2}{\bf a},\tau)^2
 e^{-\frac{|z+e^\frac{\tau-\tau_1}{2}{\bf a}|^2}{4}}
 dz 
 \\
 \nonumber
 &<
 e^{\frac{{e^{\tau-\tau_1}|{\bf a}|^2}}{2}}
 \|
 W(z,\tau)
 \|_\rho^2
 \\
 \label{eq_4.27}
 &<
 C
 e^{8eR_1^2}
 \tau^{-1}
 \qquad
 \text{for }
 \tau\in(\tau_1,\tau_1+1).
 \end{align}
 Hence if
 $s_1,R_1$ satisfy
 \begin{enumerate}[(n1)]
 \item 
 $R_1^2s_1^{-\frac{1}{8}}<1$
 and
 \item 
 $e^{8eR_1^2}s_1^{-\frac{1}{8}}<1$,
 \end{enumerate}
 then
 it follow from \eqref{eq_4.25} - \eqref{eq_4.27}
 that
 \begin{align*}
 \sup_{\tau\in(\tau_1,\tau_1+1)}
 \sup_{|z|<2}
 |W_{\bf a}(z,\tau)|
 <
 Cs_1^{\frac{1}{8}}
 \tau^{-1}
 <
 C
 \tau^{-\frac{7}{8}}
 <
 \tfrac{1}{4}
 \tau^{-\frac{3}{4}}.
 \end{align*}
 We here used $\tau>s_1\gg1$.
 Therefore
 from the definition of $W_{\bf a}(z,\tau)$,
 we have
 \begin{align}
 \label{eq_4.28}
 \sup_{\tau\in(\tau_1,\tau_1+1)}
 \sup_{|z|<4R_1}
 |W(z,\tau)|
 <
 \tfrac{1}{4}
 \tau^{-\frac{3}{4}}.
 \end{align}
 In the same way,
 we can obtain estimates for $\tau\in(\tau_1+1,\tilde\tau_2)$.
 In fact,
 for $\sigma\in(\tau_1+1,\tilde\tau_2)$,
 we put
 $W_{\bf a}(z,\tau)=W(z+e^\frac{\tau-\sigma+1}{2}{\bf a},\tau)$
 ($|{\bf a}|<4R_1$)
 instead of \eqref{eq_4.24}.
 We can verify that
 \begin{align*}
 \nonumber
 \sup_{\tau\in(\sigma-\frac{1}{2},\sigma)}
 \sup_{|z|<1}
 |W_{\bf a}(z,\tau)|
 &<
 C
 \sup_{\tau\in(\sigma-1,\sigma)}
 \|W_{\bf a}(z,\tau)\|_{L_z^2(|z|<2)}
 \\
 &<
 C
 e^{8eR_1^2}
 \tau^{-1}
 <
 \tfrac{1}{4}
 \tau^{-\frac{3}{4}}.
 \end{align*}
 In the last inequality,
 we used (n2).
 This estimate implies
 \begin{align}
 \label{eq_4.29}
 \sup_{|z|<4R_1}
 |W(z,\tau)|
 <
 \tfrac{1}{4}
 \tau^{-\frac{3}{4}}
 \qquad
 \text{for }
 \tau\in(\tau_1+1,\tilde\tau_2).
 \end{align}
 For both cases (see \eqref{eq_4.28} - \eqref{eq_4.29}),
 if (n1) - (n2) are true,
 it holds that
 \begin{align}
 \label{eq_4.30}
 \sup_{|z|<4R_1}|w(z,\tau)-\kappa|
 <
 \tfrac{1}{4}
 \tau^{-\frac{3}{4}}
 \qquad
 \text{for }
 \tau\in(\tau_1,\tilde\tau_2).
 \end{align}
 This implies (L7).
 From now,
 we always assume (n1) - (n2).
 From Lemma \ref{lemma_A.4} \eqref{equation_A.9} - \eqref{equation_A.10},
 it holds that
 \begin{align}
 \nonumber
 |
 \langle N,&\,H_{2,JJ}\rangle_\rho
 -
 c_p
 b_{2,JJ}^2
 -
 \tfrac{pH_0}{\kappa}
 b_0b_{2,JJ}
 |
 \\
 \nonumber
 &<
 C
 (
 \underbrace{
 |{\bf b}_{2,IJ}|^2
 +
 |{\bf b}_2|\cdot\|W^\bot\|_\rho
 +
 \|W^\bot\|_{H_\rho^1}^2
 }_{<C{\nu}^4\tau^{-2}}
 )
 \\
 \nonumber
 &\quad
 +
 |\langle \tfrac{pW^2}{2\kappa}{\bf 1}_{|z|>R_1},H_{2,JJ}\rangle_\rho|
 +
 |\langle (N-\tfrac{pW^2}{2\kappa}){\bf 1}_{|z|<R_1},H_{2,JJ}\rangle_\rho|
 \\
 \label{eq_4.31}
 &\quad
 +
 \underbrace{
 |\langle N{\bf 1}_{|z|>R_1},H_{2,JJ}\rangle_\rho|
 }_{<C\langle W^2{\bf 1}_{|z|>R_1},|H_{2,JJ}|\rangle_\rho}
 \qquad
 \text{for }
 \tau\in(\tau_1,\tilde\tau_2)
 \end{align}
 and for $I<J$
 \begin{align}
 \nonumber
 |
 \langle N&,H_{2,IJ}\rangle_\rho
 -
 c_p
 (b_{2,II}+b_{2,JJ})
 b_{2,IJ}
 -
 \tfrac{pH_0}{\kappa}
 b_0b_{2,IJ}
 |
 \\
 \nonumber
 &<
 C
 (
 \underbrace{
 |{\bf b}_{2,IJ}|^2
 +
 |{\bf b}_2|\cdot\|W^\bot\|_\rho
 +
 \|W^\bot\|_{H_\rho^1}^2
 }_{<C{\nu}^4\tau^{-2}}
 )
 \\
 \nonumber
 &\quad
 +
 |\langle \tfrac{pW^2}{2\kappa}{\bf 1}_{|z|>R_1},H_{2,IJ}\rangle_\rho|
 +
 |\langle (N-\tfrac{pW^2}{2\kappa}){\bf 1}_{|z|<R_1},H_{2,IJ}\rangle_\rho|
 \\
 \label{eq_4.32}
 &\quad
 +
 \underbrace{
 |\langle N{\bf 1}_{|z|>R_1},H_{2,IJ}\rangle_\rho|
 }_{<C\langle W^2{\bf 1}_{|z|>R_1},|H_{2,IJ}|\rangle_\rho}
 \qquad
 \text{for }
 \tau\in(\tau_1,\tilde\tau_2).
 \end{align}
 Since
 $W^2<2(b_0^2H_0^2+|{\bf b}_2|^2\cdot|{\bf H}_2|^2)+2|W^\bot|^2$,
 we get from Lemma \ref{lem_3.1} that
 \begin{align}
 \nonumber
 |\langle &W^2{\bf 1}_{|z|>R_1},H_{2,IJ} \rangle_\rho|
 \\
 \nonumber
 &<
 \tfrac{C}{\tau^2}
 \langle (1+|z|^2){\bf 1}_{|z|>R_1},|z|^2 \rangle_\rho
 +
 C\langle (W^\bot)^2{\bf 1}_{|z|>R_1},|z|^2 \rangle_\rho
 \\
 \nonumber
 &<
 \tfrac{C}{\tau^2}
 \langle (1+|z|^2){\bf 1}_{|z|>R_1},|z|^2 \rangle_\rho
 +
 C
 \|W^\bot\|_{H_\rho^1(\R^n)}^2
 \\
 \nonumber
 &<
 \tfrac{C}{\tau^2}R_1^{n+2}e^{-\frac{R_1^2}{4}}
 +
 \tfrac{C{\nu}^8}{\tau^2}
 \\
 \label{eq_4.33}
 &<
 \tfrac{C}{\tau^2}
 (
 e^{-\frac{R_1^2}{8}}
 +
 {\nu}^8
 )
 \qquad
 \text{for }
 \tau\in(\tau_1,\tilde\tau_2).
 \end{align}
 Furthermore
 we get from \eqref{eq_4.30} that
 \begin{align}
 \nonumber
 |\langle (N&-\tfrac{pW^2}{2\kappa}){\bf 1}_{|z|<R_1},H_{2,IJ}\rangle_\rho|
 <
 C\langle |W|^3{\bf 1}_{|z|<R_1},|z|^2\rangle_\rho
 \\
 \nonumber
 &<
 C\tau^{-\frac{1}{2}}
 \langle |W|^2{\bf 1}_{|z|<R_1},|z|^2\rangle_\rho
 <
 C\tau^{-\frac{1}{2}}
 \|W\|_{H_\rho^1(\R^n)}^2
 \\
 \label{eq_4.34}
 &<
 C\tau^{-\frac{5}{2}}
 \qquad
 \text{for }
 \tau\in(\tau_1,\tilde\tau_2).
 \end{align}
 Applying \eqref{eq_4.33} - \eqref{eq_4.34} in \eqref{eq_4.31} - \eqref{eq_4.32},
 we obtain
 \begin{align}
 \label{eq_4.35}
 |
 \langle N,H_{2,JJ}\rangle_\rho
 &-
 c_p
 b_{2,JJ}^2
 -
 \tfrac{pH_0}{\kappa}
 b_0b_{2,JJ}
 |
 \\
 \nonumber
 &<
 \tfrac{C}{\tau^2}
 (
 {\nu}^4
 +
 e^{-\frac{R_1^2}{8}}
 )
 \qquad
 \text{for }
 \tau\in(\tau_1,\tilde\tau_2)
 \end{align}
 and for $I<J$
 \begin{align}
 \label{eq_4.36}
 |
 \langle N,H_{2,IJ}\rangle_\rho
 &-
 c_p
 (b_{2,II}+b_{2,JJ})
 b_{2,IJ}
 -
 \tfrac{pH_0}{\kappa}
 b_0b_{2,IJ}
 |
 \\
 \nonumber
 &<
 \tfrac{C}{\tau^2}
 (
 {\nu}^4
 +
 e^{-\frac{R_1^2}{8}}
 )
 \qquad
 \text{for }
 \tau\in(\tau_1,\tilde\tau_2).
 \end{align}
 Here we note from (l1) that
 $|b_{2,II}+b_{2,JJ}+\frac{2}{c_p\tau}|<C{\nu}\tau^{-1}$.
 Therefore
 from (k0) and (l1) - (l2),
 we can write \eqref{eq_4.35} - \eqref{eq_4.36} as
 \begin{align}
 \nonumber
 |
 \langle N,H_{2,JJ}\rangle_\rho
 -
 c_p
 b_{2,JJ}^2
 |
 &<
 \tfrac{C}{\tau^2}
 (
 {\nu}^4
 +
 e^{-\frac{R_1^2}{8}}
 )
 +
 \tfrac{pH_0}{\kappa}
 \underbrace{|b_0b_{2,JJ}|}_{<C{\nu}^2\tau^{-2}}
 \\
 \label{eq_4.37}
 &<
 \tfrac{C}{\tau^2}
 (
 {\nu}^2
 +
 e^{-\frac{R_1^2}{8}}
 )
 \qquad
 \text{for }
 \tau\in(\tau_1,\tilde\tau_2)
 \end{align}
 and for $I<J$
 \begin{align}
 \nonumber
 |
 \langle N,H_{2,IJ}\rangle_\rho
 +
 \tfrac{2}{\tau}
 b_{2,IJ}
 |
 &<
 \tfrac{C}{\tau^2}
 (
 {\nu}
 +
 e^{-\frac{R_1^2}{8}}
 )
 +
 c_p
 \underbrace{
 |b_{2,II}+b_{2,JJ}+\tfrac{2}{c_p\tau}|
 |b_{2,IJ}|
 }_{C{\nu}^3\tau^{-2}}
 \\
 \label{eq_4.38}
 &<
 \tfrac{C}{\tau^2}
 (
 {\nu}^3
 +
 e^{-\frac{R_1^2}{8}}
 ) 
 \qquad
 \text{for }
 \tau\in(\tau_1,\tilde\tau_2).
 \end{align}
 We now assume
 \begin{enumerate}[(n1)]
 \setcounter{enumi}{2}
 \item $e^{-\frac{R_1^2}{8}}<{\nu}^3$.
 \end{enumerate}
 As a consequence,
 we get from \eqref{eq_4.22} - \eqref{eq_4.23}
 and \eqref{eq_4.37}- \eqref{eq_4.38} that
 \begin{align}
 \label{eq_4.39}
 |
 \tfrac{d}{d\tau}b_{2,JJ}
 -
 c_p
 b_{2,JJ}^2
 |
 &<
 C{\nu}^2
 \tau^{-2}
 \qquad
 \text{for }
 \tau\in(\tau_1,\tilde\tau_2),
 \\
 \label{eq_4.40}
 |
 \tfrac{d}{d\tau}b_{2,IJ}
 +
 \tfrac{2}{\tau}
 b_{2,IJ}
 |
 &<
 C{\nu}^3
 \tau^{-2}
 \qquad
 \text{for }
 \tau\in(\tau_1,\tilde\tau_2)
 \text{ and }
 I<J.
 \end{align}
 We note from (l1) that
 \begin{align*}
 -\tfrac{5}{4c_p\tau}
 <
 b_{2,JJ}(\tau)
 <
 -\tfrac{3}{4c_p\tau}
 \qquad
 \text{for }
 \tau\in(\tau_1,\tilde\tau_2).
 \end{align*}
 Hence
 it follows from \eqref{eq_4.39} that
 \begin{align*}
 (c_p-C{\nu}^2)b_{2,JJ}^2
 <
 \tfrac{d}{d\tau}
 b_{2,JJ}
 <
 (c_p+C{\nu}^2)b_{2,JJ}^2
 \qquad
 \text{for }
 \tau\in(\tau_1,\tilde\tau_2).
 \end{align*}
 Integrating both sides,
 we get
 \begin{align*}
 (c_p-C{\nu}^2)
 (\tau&-\tau_1)
 <
 \tfrac{-1}{b_{2,JJ}(\tau)}
 +
 \tfrac{1}{b_{2,JJ}(\tau_1)}
 \\
 &<
 (c_p+C{\nu}^2)
 (\tau-\tau_1)
 \qquad
 \text{for }
 \tau\in(\tau_1,\tilde\tau_2).
 \end{align*}
 Combining (k1),
 we obtain
 for $\tau\in(\tau_1,\tilde\tau_2)$.
 \begin{align}
 \nonumber
 |\tfrac{1}{b_{2,JJ}(\tau)}+c_p\tau|
 &<
 |\tfrac{1}{b_{2,JJ}(\tau_1)}+c_p\tau_1|
 +
 C{\nu}^2
 (\tau-\tau_1)
 \\
 \nonumber
 &<
 {\nu}
 c_p
 \tau_1
 +
 C{\nu}^2
 (\tau-\tau_1)
 \\
 \nonumber
 &<
 {\nu}
 c_p
 \tau
 -
 {\nu}
 c_p
 (\tau-\tau_1)
 +
 C{\nu}^2
 (\tau-\tau_1)
 \\
 \label{eq_4.41}
 &<
 {\nu}
 c_p
 \tau
 -
 {\nu}
 c_p
 (
 1
 -
 \tfrac{C{\nu}}{c_p}
 )
 (\tau-\tau_1).
 \end{align}
 Furthermore
 integrating \eqref{eq_4.40},
 and from (k2),
 we have
 \begin{align}
 \nonumber
 |b_{2,IJ}(\tau)|
 &<
 (\tfrac{\tau_1}{\tau})^2
 |b_{2,IJ}(\tau_1)|
 +
 \int_{\tau_1}^\tau
 (\tfrac{s}{\tau})^2
 \tfrac{C{\nu}^3}{s^2}
 ds
 \\
 \nonumber
 &<
 (\tfrac{\tau_1}{\tau})^2
 \tfrac{{\nu}^2}{\tau_1}
 +
 \tfrac{C{\nu}^3(\tau-\tau_1)}{\tau^2}
 =
 \tfrac{{\nu}^2\tau_1}{\tau^2}
 +
 \tfrac{C{\nu}^3(\tau-\tau_1)}{\tau^2}
 \\
 \nonumber
 &=
 \tfrac{{\nu}^2}{\tau}
 -
 \tfrac{{\nu}^2(\tau-\tau_1)}{\tau^2}
 +
 \tfrac{C{\nu}^3(\tau-\tau_1)}{\tau^2}
 \\
 \label{eq_4.42}
 &=
 \tfrac{{\nu}^2}{\tau}
 -
 \tfrac{{\nu}^2(\tau-\tau_1)}{\tau^2}
 (1-C{\nu})
 \qquad
 \text{for }
 \tau\in(\tau_1,\tilde\tau_2).
 \end{align}
 Estimates \eqref{eq_4.41} and \eqref{eq_4.42} show (L1) - (L2).
 From \eqref{eq_4.15},
 we can derive the equation for $W^\bot$.
 \begin{align}
 \label{eq_4.43}
 W_\tau^\bot
 &=
 A_zW^\bot
 +
 W^\bot
 +
 N
 -
 (N,H_0)H_0
 -
 (N,{\bf H}_2)
 \cdot
 {\bf H}_2
 \\
 \nonumber
 &\quad
 +
 {\bf c}(\tau)
 \cdot
 \nabla_zW^\bot
 -
 \langle
 ({\bf c}(\tau)
 \cdot
 \nabla_zW),
 {\bf H}_2
 \rangle_\rho
 \cdot
 {\bf H}_2.
 \end{align}
 We multiply \eqref{eq_4.43} by $W^\bot$.
 Then we get
 \begin{align}
 \label{eq_4.44}
 \tfrac{1}{2}
 \tfrac{d}{d\tau}
 \|W^\bot\|_\rho^2
 &=
 -
 \|
 \nabla_zW^\bot
 \|_\rho^2
 +
 \|
 W^\bot
 \|_\rho^2
 +
 \langle
 N,W^\bot
 \rangle_\rho
 \\
 \nonumber
 &\quad
 +
 {\bf c}(\tau)
 \cdot
 \langle
 \nabla_zW^\bot,
 W^\bot
 \rangle_\rho.
 \end{align}
 We compute the third term $\langle N,W^\bot\rangle_\rho$
 on the right hand side of \eqref{eq_4.44}.
 \begin{align}
 \nonumber
 &\|N\|_\rho
 <
 C\|W^2\|_\rho
 \\
 \nonumber
 &<
 C\|\underbrace{(b_0H_0+{\bf b}_2\cdot{\bf H}_2)W
 }_{<C\tau^{-1}(1+|z|^2)|W|}\|_\rho
 +
 C\|W^\bot W\|_\rho
 \\
 \label{eq_4.45}
 &<
 \tfrac{C}{\tau}
 \|(1+|z|^2)W\|_\rho
 +
 C\|W^\bot W\|_\rho
 \qquad
 \text{for }
 \tau\in(\tau_1,\tilde\tau_2).
 \end{align}
 As for the first term on the right hand side of \eqref{eq_4.45},
 we see that
 \begin{align}
 \nonumber
 \tfrac{1}{\tau}
 \|&(1+|z|^2)W\|_\rho
 <
 \tfrac{1}{\tau}
 \|(1+|z|^2)(b_0H_0+{\bf b}_2\cdot{\bf H}_2)\|_\rho
 \\
 \nonumber
 &\quad
 +
 \tfrac{1}{\tau}
 \|(1+|z|^2)W^\bot\|_\rho
 \\
 \nonumber
 &<
 \tfrac{C}{\tau^2}
 +
 \tfrac{1}{\tau}
 \|(1+|z|^2)W^\bot{\bf 1}_{|z|<\tau^\frac{1}{4}}\|_\rho
 +
 \tfrac{1}{\tau}
 \|(1+|z|^2)W^\bot{\bf 1}_{|z|>\tau^\frac{1}{4}}\|_\rho
 \\
 \nonumber
 &<
 \tfrac{C}{\tau^2}
 +
 \tfrac{1}{\sqrt\tau}
 \|W^\bot{\bf 1}_{|z|<\tau^\frac{1}{4}}\|_\rho
 +
 \tfrac{1}{\tau}
 \|\underbrace{(1+|z|^2)W}_{<2\kappa(1+|z|^2)}{\bf 1}_{|z|>\tau^\frac{1}{4}}\|_\rho
 \\
 \nonumber
 &\quad
 +
 \tfrac{1}{\tau}
 \|(1+|z|^2)(b_0H_0+{\bf b}_2\cdot{\bf H}_2)
 {\bf 1}_{|z|>\tau^\frac{1}{4}}\|_\rho
 \\
 \label{eq_4.46}
 &<
 \tfrac{C}{\tau^2}
 +
 \tfrac{\|W^\bot\|_\rho}{\sqrt\tau}
 \qquad
 \text{for }
 \tau\in(\tau_1,\tilde\tau_2).
 \end{align}
 We estimate the second term on the right hand side of \eqref{eq_4.45},
 by using \eqref{eq_4.30} and $|W(z,\tau)|<2\kappa$ (see (l5)).
 \begin{align}
 \nonumber
 \|W^\bot W\|_\rho
 &<
 \|W^\bot W{\bf 1}_{|z|<R_1}\|_\rho
 +
 \|W^\bot W{\bf 1}_{|z|>R_1}\|_\rho
 \\
 \nonumber
 &<
 \tfrac{1}{\sqrt\tau}
 \|W^\bot{\bf 1}_{|z|<R_1}\|_\rho
 +
 2\kappa\|W^\bot{\bf 1}_{|z|>R_1}\|_\rho
 \\
 \nonumber
 &<
 \tfrac{\|W^\bot\|_\rho}{\sqrt\tau}
 +
 \tfrac{2\kappa}{R_1}
 \||z|W^\bot{\bf 1}_{|z|>R_1}\|_\rho
 \\
 \label{eq_4.47}
 &<
 \tfrac{\|W^\bot\|_\rho}{\sqrt\tau}
 +
 \tfrac{2\kappa C}{R_1}
 \|W^\bot\|_{H_\rho^1(\R^n)}
 \qquad
 \text{for }
 \tau\in(\tau_1,\tilde\tau_2).
 \end{align}
 In the last inequality,
 we used Lemma \ref{lem_3.1}.
 Hence
 \eqref{eq_4.45} - \eqref{eq_4.47}
 imply
 \begin{align}
 \label{eq_4.48}
 \|N\|_\rho
 <
 \tfrac{C}{\tau^2}
 +
 (
 \tfrac{C}{\sqrt\tau}
 +
 \tfrac{C}{R_1}
 )
 \|W^\bot\|_{H_\rho^1(\R^n)}
 \qquad
 \text{for }
 \tau\in(\tau_1,\tilde\tau_2).
 \end{align}
 The constant $C$ does not depend on $R_1$.
 Therefore
 substituting \eqref{eq_4.21} and \eqref{eq_4.48}
 to \eqref{eq_4.44},
 we get for $\tau\in(\tau_1,\tilde\tau_2)$
 \begin{align}
 \nonumber
 \tfrac{1}{2}
 &\tfrac{d}{d\tau}
 \|W^\bot\|_\rho^2
 <
 -
 \|
 \nabla_zW^\bot
 \|_\rho^2
 +
 \|
 W^\bot
 \|_\rho
 +
 \tfrac{C\|W^\bot\|_\rho}{\tau^2}
 +
 (
 \tfrac{C}{\sqrt\tau}
 \\
 \nonumber
 &\quad
 +
 \tfrac{C}{R_1}
 )
 \|W^\bot\|_{H_\rho^1(\R^n)}
 \|W^\bot\|_\rho
 +
 \tfrac{C}{\tau}
 \|\nabla_zW^\bot\|_\rho
 \|W^\bot\|_\rho
 \\
 \label{eq_4.49}
 &<
 (-1+\tfrac{C}{\sqrt\tau}+\tfrac{C}{R_1})
 \|
 \nabla_zW^\bot
 \|_\rho^2
 +
 (\tfrac{33}{32}+\tfrac{C}{\sqrt\tau}+\tfrac{C}{R_1})
 \|
 W^\bot
 \|_\rho
 +
 \tfrac{C}{\tau^4}.
 \end{align}
 If $s_1,R_1$ are large enough,
 the right hand side of \eqref{eq_4.49} is bounded by
 \begin{align}
 \label{eq_4.50}
 \tfrac{1}{2}
 \tfrac{d}{d\tau}
 \|W^\bot\|_\rho^2
 <
 -
 \tfrac{1}{4}
 \|
 W^\bot
 \|_\rho
 +
 \tfrac{C}{\tau^4}
 \qquad
 \text{for }
 \tau\in(\tau_1,\tilde\tau_2).
 \end{align}
 Integrating \eqref{eq_4.50},
 we get from (k3) that
 \begin{align*}
 \|W^\bot&(\tau)\|_\rho^2
 <
 e^{-\frac{\tau-\tau_1}{2}}
 \|W^\bot(\tau_1)\|_\rho^2
 +
 e^{-\frac{\tau}{2}}
 \int_{\tau_1}^\tau
 e^\frac{s}{2}
 \tfrac{C}{s^4}
 ds
 \\
 &<
 e^{-\frac{\tau-\tau_1}{2}}
 {\nu}^8
 \tau_1^{-2}
 +
 e^{-\frac{\tau}{2}}
 [
 e^\frac{s}{2}
 \tfrac{C}{s^4}
 ]_{s=\tau_1}^{s=\tau}
 \qquad
 \text{for }
 \tau\in(\tau_1,\tilde\tau_2).
 \end{align*}
 Since $e^{-\frac{\tau-\tau_1}{2}}\tau_1^{-2}<\tau^{-2}$ if $\tau_1>4$,
 it follows that
 \begin{align}
 \label{eq_4.51}
 \|W^\bot&(\tau)\|_\rho^2
 <
 {\nu}^8
 \tau^{-2}
 +
 C\tau^{-4}
 \qquad
 \text{for }
 \tau\in(\tau_1,\tilde\tau_2).
 \end{align}
 We next derive estimates for $\|\nabla_zW^\bot\|_\rho$.
 Put
 \begin{align*}
 V
 =
 \pa_{z_i}W^\bot.
 \end{align*}
 From \eqref{eq_4.43},
 we verify that
 $V$ satisfies
 \begin{align}
 \label{eq_4.52}
 V_\tau
 &=
 A_zV
 +
 \tfrac{1}{2}
 V
 +
 \pa_{z_i}N
 -
 (N,{\bf H}_2)
 \cdot
 \pa_{z_i}{\bf H}_2
 \\
 \nonumber
 &\quad
 +
 {\bf c}(\tau)
 \cdot
 \nabla_zV
 -
 \langle
 ({\bf c}(\tau)
 \cdot
 \nabla_zW),
 {\bf H}_2
 \rangle_\rho
 \cdot
 \pa_{z_i}
 {\bf H}_2.
 \end{align}
 Multiplying \eqref{eq_4.52} by $V$,
 we get
 \begin{align}
 \label{eq_4.53}
 \tfrac{1}{2}
 \tfrac{d}{d\tau}
 \|V\|_\rho^2
 &=
 -
 \|
 \nabla_zV
 \|_\rho^2
 +
 \tfrac{1}{2}
 \|
 V
 \|_\rho^2
 +
 \langle
 \pa_{z_i}N,V
 \rangle_\rho
 \\
 \nonumber
 &\quad
 +
 {\bf c}(\tau)
 \cdot
 \langle
 \nabla_zV,
 V
 \rangle_\rho.
 \end{align}
 From \eqref{eq_4.48} and (l3) - (l4),
 we can estimate the last two terms in \eqref{eq_4.53}.
 \begin{align}
 \nonumber
 |\langle
 &
 \pa_{z_i}N,V
 \rangle_\rho|
 =
 |\langle
 N,(\pa_{z_i}V-\tfrac{z_i}{2}V)
 \rangle_\rho|
 \\
 \nonumber
 &<
 C
 \|N\|_\rho
 \cdot
 C\|V\|_{H_\rho^1(\R^n)}
 \\
 \nonumber
 &<
 C
 \{
 \tfrac{1}{\tau^2}
 +
 (
 \tfrac{1}{\sqrt\tau}
 +
 \tfrac{1}{R_1}
 )
 \|W^\bot\|_{H_\rho^1(\R^n)}
 \}
 \|V\|_{H_\rho^1(\R^n)}
 \\
 \label{eq_4.54}
 &<
 C(
 \tfrac{1}{\tau^2}
 +
 \tfrac{{\nu}^4}{\tau\sqrt\tau}
 +
 \tfrac{{\nu}^4}{R_1\tau}
 )
 \|V\|_{H_\rho^1(\R^n)}
 \qquad
 \text{for }
 \tau\in(\tau_1,\tilde\tau_2)
 \end{align}
 and
 \begin{align}
 \nonumber
 |{\bf c}(\tau)
 \cdot
 \langle
 \nabla_zV,
 V
 \rangle_\rho|
 &<
 |{\bf c}(\tau)|
 \|\nabla_zV\|_\rho
 \|V\|_\rho
 \\
 \label{eq_4.55}
 &<
 \tfrac{C}{\tau}
 \|\nabla_zV\|_\rho
 \|V\|_\rho
 \qquad
 \text{for }
 \tau\in(\tau_1,\tilde\tau_2).
 \end{align}
 Substituting \eqref{eq_4.54} - \eqref{eq_4.55} to \eqref{eq_4.53}
 and repeating the above computation,
 we obtain
 if $R_1,s_1$ are large enough
 \begin{align}
 \nonumber
 \|V\|_\rho^2
 &<
 e^{-\frac{\tau-\tau_1}{2}}
 \|V(\tau_1)\|_\rho
 +
 \tfrac{C}{\tau^4}
 +
 \tfrac{C{\nu}^8}{\tau^3}
 +
 \tfrac{C{\nu}^8}{R_1^2\tau^2}
 \\
 \nonumber
 &<
 \tfrac{{\nu}^8}{\tau^2}
 +
 \tfrac{C}{\tau^4}
 +
 \tfrac{C{\nu}^8}{\tau^3}
 +
 \tfrac{C{\nu}^8}{R_1^2\tau^2}
 \\
 \label{eq_4.56}
 &<
 \tfrac{9{\nu}^8}{8\tau^2}
 +
 \tfrac{C}{\tau^4}
 \qquad
 \text{for }
 \tau\in(\tau_1,\tilde\tau_2).
 \end{align}
 Summarizing \eqref{eq_4.51} and \eqref{eq_4.56},
 we deduce that
 \begin{align}
 \label{eq_4.57}
 \|W^\bot\|_\rho^2
 &<
 \tfrac{{\nu}^8}{\tau^2}
 +
 \tfrac{C}{\tau^4}
 \qquad
 \text{for }
 \tau\in(\tau_1,\tilde\tau_2),
 \\
 \label{eq_4.58}
 \|\pa_{z_J}W^\bot\|_\rho
 &<
 \tfrac{9{\nu}^8}{8\tau^2}
 +
 \tfrac{C}{\tau^4}
 \qquad
 \text{for }
 \tau\in(\tau_1,\tilde\tau_2).
 \end{align}
 We here assume
 \begin{enumerate}[(n1)]
 \setcounter{enumi}{3}
 \item $s_1^{-1}<{\nu}^5$.
 \end{enumerate}
 If (n4) is satisfied,
 (L3) - (L4) follow from \eqref{eq_4.57} - \eqref{eq_4.58}.
 We finally prove (L5) - (L6) and (L8).
 We recall that $W^\bot(z,\tau)$ satisfies (see \eqref{eq_4.43})
 \begin{align}
 \label{eq_4.59}
 W_\tau^\bot
 &=
 A_zW^\bot
 +
 W^\bot
 +
 N
 +
 \underbrace{
 \langle-N,H_0\rangle_\rho
 H_0
 +
 \langle-N,{\bf H}_2\rangle_\rho
 \cdot
 {\bf H}_2
 }_{=F_1(z,\tau)}
 \\
 \nonumber
 &\quad
 +
 {\bf c}(\tau)
 \cdot
 \nabla_zW^\bot
 +
 \underbrace{
 \langle
 (-{\bf c}(\tau)
 \cdot
 \nabla_zW),
 {\bf H}_2
 \rangle_\rho
 \cdot
 {\bf H}_2
 }_{=F_2(z,\tau)}.
 \end{align}
 The nonlinear term $N$ is expressed as
 \begin{align*}
 N
 &=
 \tfrac{pW^2}{2\kappa}
 +
 (N-\tfrac{pW^2}{2\kappa})
 \\
 &=
 \tfrac{pWW^\bot}{2\kappa}
 +
 \tfrac{p(b_0H_0+{\bf b}_2\cdot{\bf H}_2)W}{2\kappa}
 +
 (N-\tfrac{pW^2}{2\kappa})
 \\
 &=
 \tfrac{pWW^\bot}{2\kappa}
 +
 \tfrac{p(b_0H_0+{\bf b}_2\cdot{\bf H}_2)W^\bot}{2\kappa}
 +
 \tfrac{p(b_0H_0+{\bf b}_2\cdot{\bf H}_2)^2}{2\kappa}
 +
 (N-\tfrac{pW^2}{2\kappa})
 \\
 &=
 \underbrace{
 (
 \tfrac{pW}{2\kappa}
 +
 \tfrac{p(b_0H_0+{\bf b}_2\cdot{\bf H}_2)}{2\kappa}
 )
 }_{=P(z,\tau)}
 W^\bot
 +
 \underbrace{
 \tfrac{p(b_0H_0+{\bf b}_2\cdot{\bf H}_2)^2}{2\kappa}
 }_{=F_3(z,\tau)}
 +
 \underbrace{
 (N-\tfrac{pW^2}{2\kappa})
 }_{=F_4(z,\tau)}.
 \end{align*}
 We rewrite \eqref{eq_4.59} as
 \begin{align}
 \label{eq_4.60}
 W_\tau^\bot
 &=
 A_zW^\bot
 +
 W^\bot
 +
 {\bf c}(\tau)
 \cdot
 \nabla_zW^\bot
 +
 P(z,\tau)
 W^\bot
 \\
 \nonumber
 &\quad
 +
 F_1+F_2+F_3+F_4.
 \end{align}
 From Lemma \ref{lem_3.1} and the bound of $\|W(\tau)\|_{H_\rho^1(\R^n)}$
 (see \eqref{eq_4.18}),
 we see that
 \begin{align*}
 |F_1(z,\tau)|
 &<
 C|\langle W^2,(1+|z|^2)\rangle_\rho|
 (1+|z|^2)
 \\
 &<
 C\|W\|_{H_\rho^1(\R^n)}^2
 (1+|z|^2)
 \\
 &<
 \tfrac{C}{\tau^2}
 (1+|z|^2)
 \qquad
 \text{for }
 \tau\in(\tau_1,\tilde\tau_2),
 \\
 |F_2(z,\tau)|
 &=
 |\langle
 ({\bf c}(\tau)
 \cdot
 \nabla_zW),
 {\bf H}_2
 \rangle_\rho
 \cdot
 {\bf H}_2
 |
 \\
 &=
 |\langle
 W,
 {\bf c}(\tau)\cdot(\nabla_z{\bf H}_2-\tfrac{z}{2}{\bf H}_2)
 \rangle_\rho
 \cdot
 {\bf H}_2
 |
 \\
 &=
 |\langle
 W,
 {\bf c}(\tau)\cdot\tfrac{z}{2}{\bf H}_2
 \rangle_\rho
 \cdot
 {\bf H}_2
 |
 \\
 &<
 |{\bf c}(\tau)|
 \|W^\bot\|_\rho
 (1+|z|^2)
 \\
 &<
 \tfrac{{\nu}^4}{\tau^2}
 (1+|z|^2)
 \qquad
 \text{for }
 \tau\in(\tau_1,\tilde\tau_2),
 \\
 |F_3(z,\tau)|
 &<
 C(|b_0|^2+|{\bf b}_2|^2)(1+|z|^4)
 \\
 &<
 \tfrac{C}{\tau^2}
 (1+|z|^4)
 \qquad
 \text{for }
 \tau\in(\tau_1,\tilde\tau_2).
 \end{align*}
 From \eqref{eq_4.30},
 we note that
 \begin{align*}
 |F_4(z,\tau)|
 &<
 |N-\tfrac{pW^2}{2\kappa}|
 <
 C|W|^3
 <
 C\tau^{-\frac{9}{4}}
 \qquad
 \text{for }
 |z|<16\sqrt{n}
 \end{align*}
 and
 $\tau\in(\tau_1,\tilde\tau_2)$.
 Furthermore
 we note that $|P(z,\tau)|<C$
 for $|z|<16\sqrt{n}$ and $\tau\in(\tau_1,\tilde\tau_2)$.
 Applying a local parabolic regularity estimate in \eqref{eq_4.60},
 we get from (k8) and (l3) that
 \begin{align*}
 \sup_{\tau\in(\tau_1,\tau_1+1)}
 &\sup_{|z|<8\sqrt{n}}
 |W^\bot(z,\tau)|
 <
 C
 \sup_{\tau\in(\tau_1,\tau_1+1)}
 (
 \|W^\bot(\tau)\|_\rho
 \\
 &\quad
 +
 \sup_{|z|<16\sqrt{n}}
 \sum_{i=1}^4
 |F_i(z,\tau)|
 )
 +
 C
 \sup_{|z|<16\sqrt{n}}
 |W^\bot(z,\tau_1)|
 \\
 &
 <
 C({\nu}^4\tau_1^{-1}+\tau_1^{-2}).
 \end{align*}
 Furthermore
 we obtain
 \begin{align*}
 &\sup_{\sigma\in(\tau-\frac{1}{2},\tau)}
 \sup_{|z|<8\sqrt{n}}
 |W^\bot(z,\sigma)|
 \\
 &<
 C
 (
 \sup_{\sigma\in(\tau-1,\tau)}
 \|W^\bot(\sigma)\|_\rho
 +
 \sup_{\sigma\in(\tau-1,\tau)}
 \sup_{|z|<16\sqrt{n}}
 \sum_{i=1}^4
 |F_i(z,\sigma)|
 )
 \\
 &<
 C({\nu}^4\tau^{-1}+\tau^{-2})
 \qquad
 \text{for }
 \tau\in(\tau_1+1,\tilde\tau_2).
 \end{align*}
 Both estimates imply
 \begin{align}
 \label{eq_4.61}
 \sup_{|z|<8\sqrt{n}}
 |W^\bot(z,\tau)|
 <
 \tfrac{C{\nu}^4}{\tau}
 +
 \tfrac{C}{\tau^2}
 \qquad
 \text{for }
 \tau\in(\tau_1,\tilde\tau_2).
 \end{align}
 If ${\nu}_1$ is small enough and
 \begin{enumerate}[(n1)]
 \setcounter{enumi}{4}
 \item $\tfrac{C}{s_1}<\frac{{\nu}^2}{8}$
 \quad ($C$ is a constant in \eqref{eq_4.61}),
 \end{enumerate}
 we obtain
 \begin{align}
 \label{eq_4.62}
 \sup_{|z|<8\sqrt{n}}
 |W^\bot(z,\tau)|
 <
 \tfrac{{\nu}^2}{4\tau}
 \qquad
 \text{for }
 \tau\in(\tau_1,\tilde\tau_2).
 \end{align}
 This estimate shows (L8).
 We now prove (L6) by using \eqref{eq_4.62}.
 From (l1) - (l2) and \eqref{eq_4.62},
 we have
 \begin{align*}
 &|
 w(z,\,\tau)
 -
 \kappa
 -
 b_0
 H_0
 +
 \tfrac{1}{c_p\tau}
 \sum_{J=1}^n
 H_{2,JJ}(z)
 |
 \\
 &<
 |
 \sum_{J=1}^n
 (\underbrace{b_{2,JJ}+\tfrac{1}{c_p\tau}}_{<C{\nu}\tau^{-1}})H_{2,JJ}(z)
 |
 +
 \underbrace{|{\bf b}_{2,IJ}|}_{<{\nu}^{2}\tau^{-1}}
 |{\bf H}_{2,IJ}(z)|
 +
 \underbrace{
 |w^\bot(z,\tau)|
 }_{=|W^\bot(z,\tau)|}
 \\
 &<
 \tfrac{C{\nu}}{\tau}
 \qquad
 \text{for }
 |z|<4\sqrt{n}
 \text{ and }
 \tau\in(\tau_1,\tilde\tau_2).
 \end{align*}
 We here note that (see Section \ref{sec_2} for the definition of $H_{2,JJ}$ and
 \eqref{eq_2.7} for the definition of $c_p$)
 \begin{align}
 \label{eq_4.63}
 \tfrac{1}{c_p\tau}\sum_{J=1}^nH_{2,JJ}(z)
 =
 \tfrac{\kappa}{4p\tau}(|z|^2-2n).
 \end{align}
 Since $|b_0(\tau)|<C{\nu}^2\tau^{-1}$ (see (k0)),
 it follows that
 \begin{align}
 \nonumber
 w(z,\tau)&
 |_{|z|=2\sqrt{n}}
 <
 \kappa
 +
 b_0H_0
 +
 \tfrac{1}{c_p\tau}
 \sum_{J=1}^n
 H_{2,JJ}(z)
 |_{|z|=2\sqrt{n}}
 +
 \tfrac{C{\nu}}{\tau}
 \\
 \nonumber
 &<
 \kappa
 +
 \tfrac{C{\nu}^2}{\tau}
 -
 \tfrac{\kappa}{4p\tau}(|z|^2-2n)
 |_{|z|=2\sqrt{n}}
 +
 \tfrac{C{\nu}}{\tau}
 \\
 \label{eq_4.64}
 &<
 \kappa
 -
 \tfrac{\kappa n}{2p\tau}
 +
 \tfrac{C{\nu}}{\tau}
 \qquad
 \text{for }
 \tau\in(\tau_1,\tilde\tau_2).
 \end{align}
 Therefore
 if $C\nu_1<\frac{\kappa n}{8p}$,
 we get
 \begin{align}
 w(z,\tau)|_{|z|=2\sqrt{n}}
 \label{eq_4.65}
 <
 \kappa
 -
 \tfrac{\kappa n}{4p\tau}
 \qquad
 \text{for }
 \tau\in(\tau_1,\tilde\tau_2).
 \end{align}
 We apply a comparison argument
 together with \eqref{eq_4.65} and (k6) to obtain
 \begin{align}
 \label{eq_4.66}
 w(z,\tau)
 <\kappa
 \qquad
 \text{for }
 |z|>2\sqrt{n},\
 \tau\in(\tau_1,\tilde\tau_2).
 \end{align}
 This shows (L6).
 From (k0), (l1) - (l2) and \eqref{eq_4.62},
 it holds that
 \begin{align}
 \nonumber
 \sup_{|z|<4\sqrt{n}}
 |w(z,\tau)-\kappa|
 &<
 \sup_{|z|<4\sqrt{n}}
 |b_0H_0+{\bf b}_2\cdot{\bf H}_2|
 +
 \sup_{|z|<4\sqrt{n}} 
 |w^\bot(z,\tau)|
 \\
 \label{eq_4.67}
 &<
 \tfrac{C}{\tau}
 \qquad
 \text{for }
 \tau\in(\tau_1,\tilde\tau_2).
 \end{align}
 Therefore
 \eqref{eq_4.66} - \eqref{eq_4.67} show the right hand inequality in (L5).
 The left hand inequality in (L5)
 is immediately obtained by a comparison argument.
 In fact,
 since we are now assuming (k5),
 it is sufficient to check that
 $w_1(z,\tau)=e^{-\frac{3\tau}{4(p-1)}}$ gives a subsolution of \eqref{eq_4.13}.
 The proof of (L1) - (L8) is completed.
 We finally list (n1) - (n5), which is conditions for $R_1,s_1$.
 \begin{enumerate}[(n1)]
 \item 
 $R_1^2s_1^{-\frac{1}{8}}<1$,

 \item 
 $e^{8eR_1^2}s_1^{-\frac{1}{8}}<1$,

 \item 
 $e^{-\frac{R_1^2}{8}}<{\nu}^3$,

 \item 
 $s_1^{-1}<{\nu}^5$,

 \item 
 $\tfrac{C}{s_1}<\frac{{\nu}^2}{8}$
 \quad ($C$ is a constant in \eqref{eq_4.61}).
 \end{enumerate}
 For given $\nu\in(0,\nu_1)$,
 we can choose $R_1,s_1$ satisfying (n1) - (n5).
 For example,
 it is enough to take $R_1=\frac{1}{{\nu}}$ and $s_1=e^\frac{128e}{{\nu^2}}$.
 \end{proof}

 We now go back to \eqref{eq_4.5}.
 We do not assume the orthogonal condition here:
 \[
 \langle w(z,\tau),H_{1,J}\rangle_\rho=0.
 \]
 Furthermore
 instead of (k0),
 we assume
 \begin{enumerate}[({i}1)]
 \setlength{\leftskip}{5mm}
 \item 
 $\tau_2>\tau_0$,
 \item 
 $b_0(\tau_2)
 =
 -\tfrac{\nu_0^2}{pH_0\tau_2}$,
 \item
 $
 b_{1,J}(\tau_2)
 =
 0$
 \quad
 for all $J$.
 \end{enumerate}
 \begin{pro}\label{proposition_4.4}
 Assume $n\geq1$ and $p>1$.
 Let
 $w(z,\tau)$
 be a solution of \eqref{eq_4.5}
 satisfying
 {\rm(i1) - (i3)}.
 Assume that
 \begin{enumerate}[\rm({j}1) ]
 \setlength{\leftskip}{5mm}
 \item 
 $|b_{2,JJ}(\tau_2)+\frac{1}{c_p\tau_2}|<\frac{2{\nu}}{c_p\tau_2}$
 \quad
 {\rm for all} $J$,
 \item 
 $|b_{2,IJ}(\tau_2)|<{\nu}^2\tau_2^{-1}$
 \quad
 {\rm for all} $I<J$,
 \item 
 $\|w^\bot(\tau_2)\|_\rho<2{\nu}^4\tau_2^{-1}$,
 \item 
 $\|\pa_{z_J}w^\bot(\tau_2)\|_\rho<2{\nu}^4\tau_2^{-1}$
 \quad
 {\rm for all} $J$,
 \item 
 $-e^{-\frac{3\tau_2}{4(p-1)}}<w(z,\tau_2)<\kappa+\tau_2^{-\frac{1}{2}}$
 \quad
 {\rm for} $z\in\R^n$
 \item 
 $w(z,\tau_2)<\kappa$
 \quad {\rm for} $|z|>2\sqrt{n}$.
 \item 
 $\dis\sup_{|z|<4R_2}|w(z,\tau_2)-\kappa|<\tau_2^{-\frac{3}{4}}$,
 \item 
 $\dis\sup_{|z|<8\sqrt{n}}|w^\bot(z,\tau_2)|<{\nu}^2\tau_2^{-1}$.
 \end{enumerate}
 There exists ${\nu}_2\in(0,1)$
 such that
 for any $\nu\in(0,\nu_2)$
 there exist $R_2=R_2(\nu)>1$
 and
 $s_2=s_2(\nu,R_2)>1$ such that
 if
 $\tau_2>s_2$,
 there exists $\tau_3\in(\tau_2,\infty)$ such that
 \begin{align*}
 w(z,\tau_3)
 <
 \kappa
 \qquad
 \text{\rm for }
 z\in\R^n.
 \end{align*}
 \end{pro}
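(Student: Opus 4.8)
Since \eqref{eq_4.5} is \eqref{eq_4.13} with ${\bf c}\equiv0$, the plan is to rerun the bootstrap of the proof of Proposition~\ref{proposition_4.3} for the modes $b_{2,JJ},b_{2,IJ},w^\bot$ of $W=w-\kappa$ (now with the drift absent), while \emph{in addition} tracking $b_0$ and $b_{1,J}$, which are no longer killed by an orthogonality condition. The point is that this only has to be run for a \emph{finite} time $T_0=\tau_3-\tau_2$ depending solely on $\nu_0$ and $\nu$; during that time the unstable coefficient $b_0(\tau)<0$ grows exponentially, from size $\sim\tau_2^{-1}$ up to a size at which $b_0H_0$ alone pushes the whole profile below $\kappa$.

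\emph{Step 1: bootstrap on $[\tau_2,\tau_3]$.}
On a maximal subinterval $[\tau_2,\tilde\tau_3)\subset[\tau_2,\tau_3]$ I would assume the analogues of (l1)--(l6) of Proposition~\ref{proposition_4.3} for $b_{2,JJ},b_{2,IJ},w^\bot$ and for the pointwise bounds on $w$, together with $b_0(\tau)<0$, $|b_0(\tau)|\le 2|b_0(\tau_2)|e^{\tau-\tau_2}$ and $|b_{1,J}(\tau)|\le\tau_2^{-3/2}$. From \eqref{eq_4.9}--\eqref{eq_4.10} with no drift, $b_0'=b_0+\langle N,H_0\rangle_\rho$ and $b_{1,J}'=\tfrac12 b_{1,J}+\langle N,H_{1,J}\rangle_\rho$. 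The bootstrap bounds give $\|W\|_{H_\rho^1(\R^n)}\le C\tau^{-1}$ as in \eqref{eq_4.18} (with a larger constant, since here $|b_0|$ is merely $O(\tau^{-1})$, not $O(\nu^2\tau^{-1})$), hence $|\langle N,H_0\rangle_\rho|\le C\tau^{-2}$ and $|\langle N,H_{1,J}\rangle_\rho|\le C\tau^{-1}|{\bf b}_1|+C\nu^3\tau^{-2}$ by $|N|\le CW^2$ and Lemma~\ref{lem_3.1}. As $b_0(\tau_2)=-\tfrac{\nu_0^2}{pH_0\tau_2}$ is negative of size $\tau_2^{-1}\gg\tau_2^{-2}$, integrating the $b_0$-equation yields, for $\tau_2$ large, $\tfrac12|b_0(\tau_2)|e^{\tau-\tau_2}\le|b_0(\tau)|\le\tfrac32|b_0(\tau_2)|e^{\tau-\tau_2}$ with $b_0<0$ throughout (so the upper bound self-improves), while $b_{1,J}(\tau_2)=0$ and a Gr\"onwall argument keep $|b_{1,J}|\ll\tau_2^{-3/2}$. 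For $b_{2,JJ},b_{2,IJ},w^\bot$ one reruns \eqref{eq_4.22}--\eqref{eq_4.58} with ${\bf c}\equiv0$; the only term not manifestly of lower order is the cross term $\tfrac{pH_0}{\kappa}b_0b_{2,JJ}=O(\tau^{-2})$ of \eqref{eq_4.35}, but integrated over a window of fixed length $T_0$ it contributes only a drift $O(T_0\tau^{-2})=o(\nu\tau^{-1})$ once $\tau_2$ is large, so (l1)--(l6) still self-improve. By continuity the bootstrap holds on all of $[\tau_2,\tau_3]$; in particular $w$ stays bounded there, so the solution exists up to $\tau_3$.

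\emph{Step 2: choice of $T_0$ and conclusion.}
Write $W=b_0H_0+{\bf b}_2\cdot{\bf H}_2+w^\bot$ and recall \eqref{eq_4.63}. The bootstrap bounds give, for $|z|\le 2\sqrt n$,
\[
W(z,\tau)\le b_0(\tau)H_0+\tfrac{\kappa n}{2p\tau}+\tfrac{C\nu}{\tau},
\]
and on $|z|=2\sqrt n$ the middle term is replaced by $-\tfrac{\kappa n}{2p\tau}$, so $W(\cdot,\tau)<0$ on $|z|=2\sqrt n$ whenever $b_0(\tau)\le0$. Step~1 gives $|b_0(\tau_3)|\ge\tfrac12 e^{T_0}\tfrac{\nu_0^2}{pH_0\tau_3}$ (using $\tau_3\ge\tau_2$), hence $|b_0(\tau_3)|H_0\ge\tfrac{e^{T_0}\nu_0^2}{2p\tau_3}$; choosing $T_0:=\log\!\big(2(\kappa n+2pC\nu)/\nu_0^2\big)$ makes this $>\tfrac{1}{\tau_3}(\tfrac{\kappa n}{2p}+C\nu)$, so that $w(z,\tau_3)<\kappa$ for $|z|\le 2\sqrt n$. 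For $|z|\ge 2\sqrt n$: $\kappa$ is an exact stationary solution of \eqref{eq_4.5}; since $w(\cdot,\tau_2)<\kappa$ on $|z|>2\sqrt n$ by (j6) and $w<\kappa$ on $\{|z|=2\sqrt n\}\times[\tau_2,\tau_3]$ by the display above, the comparison argument of \eqref{eq_4.65}--\eqref{eq_4.66} gives $w(z,\tau)<\kappa$ for $|z|\ge 2\sqrt n$ and $\tau\in(\tau_2,\tau_3]$. Combining the two regions, $w(z,\tau_3)<\kappa$ for all $z\in\R^n$, which is the assertion. The constants $\nu_2$, $R_2=R_2(\nu)$ and $s_2=s_2(\nu,R_2)$ are then fixed (large) exactly as (n1)--(n5) at the end of the proof of Proposition~\ref{proposition_4.3}, with the additional requirement that $s_2$ absorb the $O(T_0\tau_2^{-2})$ errors above and make $\tau_3=\tau_2+T_0\le 2\tau_2$.

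\emph{Expected main obstacle.}
The delicate part is Step~1: one must check that $b_{2,JJ},b_{2,IJ},w^\bot$ remain inside their bounds even though $b_0$ is now growing rather than being $\nu$-small as in hypothesis (k0) of Proposition~\ref{proposition_4.3}. This is exactly why $T_0$ must depend only on $\nu_0,\nu$: then the $b_0b_{2,JJ}$ cross term and the $e^{T_0}$ amplification of the small forcing in the $b_0$-equation stay negligible once $\tau_2$ is large, and everything else is a verbatim rerun of the proof of Proposition~\ref{proposition_4.3} with ${\bf c}\equiv0$.
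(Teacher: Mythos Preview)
Your proposal is correct and follows essentially the same approach as the paper: set $\tau_3=\tau_2+T_0$ with $T_0$ depending only on $\nu$ (the paper takes $T_0=|\log(\nu^2/\kappa n)|$), run a bootstrap on $[\tau_2,\tau_3]$ tracking $b_0,b_{1,J},b_{2,IJ},w^\bot$ and the pointwise bounds, use the exponential growth of the negative $b_0$ to force $w<\kappa$ on $|z|\le 2\sqrt n$ at $\tau_3$, and combine with comparison in $|z|>2\sqrt n$. One minor simplification the paper makes that you could adopt: for $b_{2,JJ},b_{2,IJ}$ it does \emph{not} rerun the refined ODE analysis \eqref{eq_4.39}--\eqref{eq_4.42}, but just uses the crude bound $|\tfrac{d}{d\tau}b_{2,IJ}|\le |\langle N,H_{2,IJ}\rangle_\rho|\le C\|W\|_{H_\rho^1}^2\le C\tau^{-2}$ and integrates over the short window $|\tau-\tau_2|\le T_0\le\log\tau_2$, which immediately gives $|b_{2,JJ}(\tau)+\tfrac{1}{c_p\tau}|<\tfrac{4\nu}{c_p\tau}$ and $|b_{2,IJ}(\tau)|<\tfrac{2\nu^2}{\tau}$ without any discussion of the cross term $b_0b_{2,JJ}$.
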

 \begin{proof}
 Let $\nu_2>0$ be a small constant.
 For $\nu\in(0,\nu_2)$,
 we put
 (see the end of the proof of Proposition \ref{proposition_4.3})
 \begin{itemize}
 \item $R_2=\tfrac{1}{\nu}$,
 \item $s_2=e^\frac{128e}{{\nu^2}}$.
 \end{itemize}
 Define $\tau_3>\tau_2$ by
 \begin{align}
 \label{eq_4.68}
 \tau_3
 =
 \tau_2
 +
 |\log(L_1{\nu}^2)|
 \qquad
 (L_1=\tfrac{1}{\kappa n}).
 \end{align}
 Since $\tau_2>s_2=e^\frac{128e}{{\nu^2}}\gg{\nu}^{-1}$,
 it holds that
 \begin{align}
 \label{eq_4.69}
 1<\tfrac{\tau_3}{\tau_2}<\tfrac{3}{2}.
 \end{align}
 We first claim that
 \begin{enumerate}[(m1) ]
 \setlength{\leftskip}{5mm}
 \item 
 $-\tfrac{4\kappa n}{p}
 \tfrac{1}{H_0\tau}<b_0(\tau)<0$
 \quad
 \text{for }
 $\tau\in(\tau_2,\tau_3)$,
 \item 
 $|b_{1,J}(\tau)|
 <
 {\nu}^2
 \tau^{-1}$
 \quad
 for
 $\tau\in(\tau_2,\tau_3)$
 and all $J$,
 \item 
 $|b_{2,JJ}(\tau)+\tfrac{1}{c_p\tau}|
 <
 \frac{4{\nu}}{c_p\tau_2}$
 \quad
 for
 $\tau\in(\tau_2,\tau_3)$
 and all $J$,
 \item 
 $|b_{2,IJ}(\tau)|
 <
 4{\nu}^2\tau^{-1}$
 \quad
 for
 $\tau\in(\tau_2,\tau_3)$
 {\rm and all} $I<J$,
 \item 
 $\|w^\bot(\tau)\|_\rho
 <
 8
 {\nu}^4
 \tau^{-1}$
 \quad
 for
 $\tau\in(\tau_2,\tau_3)$,
 \item 
 $\|\pa_{z_J}w^\bot(\tau)\|_\rho
 <
 8
 {\nu}^4
 \tau^{-1}$
 \quad
 for
 $\tau\in(\tau_2,\tau_3)$
 and all $J$
 \item 
 $-e^{-\frac{3\tau}{4(p-1)}}<w(z,\tau)<\kappa+2\tau^{-\frac{1}{2}}$
 \quad {\rm for}
 $(z,\tau)\in\R^n\times[\tau_2,\tau_3]$,
 \item 
 $w(z,\tau)<\kappa$
 \quad {\rm for} $|z|>2\sqrt{n}$
 \quad
 {\rm for}
 $\tau\in[\tau_2,\tau_3]$.
 \end{enumerate}
 Assume that there exists $\tilde \tau_3\in(\tau_2,\tau_3)$
 such that
 \begin{enumerate}[(h1) ]
 \setlength{\leftskip}{5mm}
 \item 
 $-\tfrac{4\kappa n}{p}\tfrac{1}{H_0\tau}<b_0(\tau)<0$
 \text{for }
 $\tau\in(\tau_2,\tilde\tau_3)$,
 \item 
 $|b_{1,J}(\tau)|
 <
 2{\nu}^2
 \tau^{-1}$
 \quad
 for
 $\tau\in(\tau_2,\tilde\tau_3)$
 and all $J$,
 \item 
 $|b_{2,JJ}(\tau)+\tfrac{1}{c_p\tau}|
 <
 \frac{8{\nu}}{c_p\tau}$
 \quad
 for
 $\tau\in(\tau_2,\tilde\tau_3)$
 {\rm and all} $J$,
 \item 
 $|b_{2,IJ}(\tau)|
 <
 4{\nu}^2\tau^{-1}$
 \quad
 for
 $\tau\in(\tau_2,\tilde\tau_3)$
 {\rm and all} $I<J$,
 \item 
 $\|w^\bot(\tau)\|_\rho
 <
 8{\nu}^4
 \tau^{-1}$
 \quad
 for
 $\tau\in(\tau_2,\tilde\tau_3)$,
 \item 
 $\|\pa_{z_J}w^\bot(\tau)\|_\rho
 <
 8{\nu}^4
 \tau^{-1}$
 \quad
 for
 $\tau\in(\tau_2,\tilde\tau_3)$
 and all $J$,
 \item 
 $-e^{-\frac{3\tau}{4(p-1)}}<w(z,\tau)<\kappa+2\tau^{-\frac{1}{2}}$
 \quad {\rm for}
 $(z,\tau)\in\R^n\times[\tau_2,\tilde\tau_3]$,
 \item 
 $w(z,\tau)<\kappa$
 \quad
 {\rm for} $|z|>2\sqrt{n}$
 {\rm and} $\tau\in[\tau_2,\tilde\tau_3]$.
 \end{enumerate}
 We now prove
 \begin{enumerate}[(H1) ]
 \setlength{\leftskip}{5mm}
 \item 
 $-
 \tfrac{3\kappa n}{p}
 \tfrac{1}{H_0\tau}<b_0(\tau)<0$
 \text{for }
 $\tau\in(\tau_2,\tilde\tau_3)$,
 \item 
 $|b_{1,J}(\tau)|
 <
 \frac{1}{2}
 {\nu}^2
 \tau^{-1}$
 \quad
 for
 $\tau\in(\tau_2,\tilde\tau_3)$
 and all $J$,
 \item 
 $|b_{2,JJ}(\tau)+\tfrac{1}{c_p\tau}|
 <
 \frac{4{\nu}}{c_p\tau}$
 \quad
 for
 $\tau\in(\tau_2,\tilde\tau_3)$
 {\rm and all} $J$,
 \item 
 $|b_{2,IJ}(\tau)|
 <
 2
 {\nu}^2\tau^{-1}$
 \quad
 for
 $\tau\in(\tau_2,\tilde\tau_3)$
 {\rm and all} $I<J$,
 \item 
 $\|w^\bot(\tau)\|_\rho
 <
 4{\nu}^4
 \tau^{-1}$
 \quad
 for
 $\tau\in(\tau_2,\tilde\tau_3)$,
 \item 
 $\|\pa_{z_J}w^\bot(\tau)\|_\rho
 <
 4{\nu}^4
 \tau^{-1}$
 \quad
 for
 $\tau\in(\tau_2,\tilde\tau_3)$
 and all $J$,
 \item 
 $-e^{-\frac{3\tau}{4(p-1)}}<w(z,\tau)<\kappa+\tau^{-\frac{1}{2}}$
 \quad {\rm for}
 $(z,\tau)\in\R^n\times[\tau_2,\tilde\tau_3]$,
 \item 
 $w(z,\tau)<\kappa$
 \quad
 {\rm for} $|z|>2\sqrt{n}$
 {\rm and} $\tau\in[\tau_2,\tilde\tau_3+\delta_1]$
 {\rm for some} $\delta_1\in(0,1)$
 \\
 {\rm(}$\delta_1$ may depend on $w(z,\tau)${\rm)}.
 \end{enumerate}
 In order to show the claim (m1) - (m8),
 it is sufficient to derive (H1) - (H8) from the assumptions (h1) - (h8).
 We repeat the argument in Proposition \ref{proposition_4.3}.
 We first confirm (H3) - (H4).
 The proof for (H3) - (H4) is simpler than that of (L1) -(L2)
 in Proposition \ref{proposition_4.3}.
 We recall that $W(z,\tau)=w(z,\tau)-\kappa$ satisfies
 \begin{align}
 \label{eq_4.70}
 W_\tau=A_zW+W+N.
 \end{align}
 From Lemma \ref{lemma_A.3} and (h1) - (h6),
 we can verify that
 \begin{align*}
 |
 \langle N,H_{2,IJ}\rangle_\rho
 |
 &<
 C\langle W^2,|z|^2\rangle_\rho 
 <
 C\|W\|_{H_\rho^1(\R^n)}^2
 \\
 &<
 \tfrac{C}{\tau^2}
 \qquad
 \text{for } \tau\in(\tau_2,\tilde\tau_3).
 \end{align*}
 This implies
 (see \eqref{eq_4.39} - \eqref{eq_4.40})
 \begin{align*}
 |\tfrac{d}{d\tau}b_{2,JJ}|
 &<
 C
 \tau^{-2}
 \qquad
 \text{for }
 \tau\in(\tau_2,\tilde\tau_3),
 \\
 |\tfrac{d}{d\tau}b_{2,IJ}|
 &<
 C\tau^{-2}
 \qquad
 \text{for }
 \tau\in(\tau_2,\tilde\tau_3)
 \text{ and }
 I<J.
 \end{align*}
 Integrating both sides,
 we get from (j1) - (j2) that
 for $\tau\in(\tau_2,\tilde\tau_3)$
 \begin{align*}
 |b_{2,JJ}(\tau)-\tfrac{1}{c_p\tau}|
 &<
 |\tfrac{1}{c_p\tau}-\tfrac{1}{c_p\tau_2}|
 +
 |b_{2,JJ}(\tau_2)-\tfrac{1}{c_p\tau_2}|
 +
 \tfrac{C(\tau-\tau_2)}{\tau_2^2}
 \\
 &<
 \tfrac{\tau-\tau_2}{c_p\tau\tau_2}
 +
 \tfrac{2{\nu}}{c_p\tau_2}
 +
 \tfrac{C(\tau-\tau_2)}{\tau_2^2},
 \\
 |b_{2,IJ}(\tau)|
 &<
 |b_{2,IJ}(\tau_2)|
 +
 \tfrac{C(\tau-\tau_2)}{\tau_2^2}
 \\
 &<
 \tfrac{{\nu}^2}{\tau_2}
 +
 \tfrac{C(\tau-\tau_2)}{\tau_2^2}
 \qquad
 (I<J).
 \end{align*}
 From \eqref{eq_4.68} and $\tau_2>s_2=e^\frac{128e}{{\nu^2}}\gg{\nu}^{-1}$,
 it holds that
 \begin{align*}
 |\tau-\tau_2|
 <
 |\tau_3-\tau_2|
 <
 |\log(L_1{\nu^2})|
 <
 \log\tau_2
 \qquad
 \text{for }
 \tau\in(\tau_2,\tilde\tau_3).
 \end{align*}
 Hence
 combining \eqref{eq_4.69},
 we obtain
 \begin{align}
 \label{eq_4.71}
 |b_{2,JJ}(\tau)-\tfrac{1}{c_p\tau}|
 &<
 \tfrac{4{\nu}}{c_p\tau}
 \qquad
 \text{for }
 \tau\in(\tau_2,\tilde\tau_3),
 \\
 \label{eq_4.72}
 |b_{2,IJ}(\tau)|
 &<
 \tfrac{2{\nu}^2}{\tau}
 \qquad
 \text{for }
 \tau\in(\tau_2,\tilde\tau_3)
 \text{ and }
 I<J.
 \end{align}
 We next compute $\|W^\bot(z,\tau)\|_\rho$ and $\|\nabla_zW^\bot(z,\tau)\|_\rho$.
 To derive a pointwise estimate for $|w(z,\tau)-\kappa|$ (see \eqref{eq_4.30}),
 we follow the argument \eqref{eq_4.24} - \eqref{eq_4.30}.
 Since we now assume (j7) instead of (k7),
 it is sufficient to replace \eqref{eq_4.26} by
 \begin{align*}
 \sup_{|z|<2}
 |W_{\bf a}(z,\tau_2)|
 <
 \tau_2^{-\frac{3}{4}}
 \qquad
 (|{\bf a}|<2R_2).
 \end{align*}
 Then
 we can verify that
 \begin{align}
 \label{eq_4.73}
 \sup_{|z|<2R_2}|w(z,\tau)-\kappa|
 <
 C
 \tau^{-\frac{3}{4}}
 \qquad
 \text{for }
 \tau\in(\tau_2,\tilde\tau_3).
 \end{align}
 Once \eqref{eq_4.73} is derived,
 we can treat the nonlinear term in the same way as \eqref{eq_4.45} - \eqref{eq_4.48}.
 In fact,
 we obtain
 \begin{align}
 \label{eq_4.74}
 \|N\|_\rho
 <
 \tfrac{C}{\tau^2}
 +
 (
 \tfrac{C}{\sqrt\tau}
 +
 \tfrac{C}{R_2}
 )
 \|W^\bot\|_{H_\rho^1(\R^n)}
 \qquad
 \text{for }
 \tau\in(\tau_2,\tilde\tau_3).
 \end{align}
 Since
 the argument \eqref{eq_4.49} - \eqref{eq_4.58} is still valid,
 we can derive
 \begin{align}
 \label{eq_4.75}
 \|W^\bot\|_\rho^2
 &<
 \tfrac{{\nu}^8}{\tau^2}
 +
 \tfrac{C}{\tau^4}
 \qquad
 \text{for }
 \tau\in(\tau_2,\tilde\tau_3),
 \\
 \label{eq_4.76}
 \|\pa_{z_J}W^\bot\|_\rho
 &<
 \tfrac{9{\nu}^8}{8\tau^2}
 +
 \tfrac{C}{\tau^4}
 \qquad
 \text{for }
 \tau\in(\tau_2,\tilde\tau_3).
 \end{align}
 Therefore
 \eqref{eq_4.71} - \eqref{eq_4.72} and \eqref{eq_4.75} - \eqref{eq_4.76}
 guarantee (H3) - (H6).
 We next show (H1).
 We take an inner product $\langle\cdot,H_0\rangle_\rho$ in \eqref{eq_4.70}.
 \begin{align}
 \label{eq_4.77}
 \tfrac{d}{d\tau}
 \langle W,H_0\rangle_\rho
 =
 \langle W,H_0\rangle_\rho
 +
 \langle N,H_0\rangle_\rho.
 \end{align}
 From Lemma \ref{lemma_A.3} and (h1) - (h5),
 we get
 \begin{align}
 \label{eq_4.78}
 |\langle N,H_0\rangle_\rho|
 &<
 C\tau^{-2}
 \qquad
 \text{for }
 \tau\in(\tau_2,\tilde\tau_3).
 \end{align}
 Hence
 integrating \eqref{eq_4.77}
 together with \eqref{eq_4.78} and (i2),
 we get
 \begin{align}
 \label{eq_4.79}
 |b_0(\tau)+e^{\tau-\tau_2}\tfrac{{\nu}^2}{pH_0\tau_2}|
 &<
 Ce^{\tau-\tau_2}
 \tau_2^{-2}
 \qquad
 \text{for }
 \tau\in(\tau_2,\tilde\tau_3).
 \end{align}
 Since $\tau_2^{-1}<s_2^{-1}=e^{-\frac{128e}{{\nu^2}}}\ll{\nu}^2$,
 \eqref{eq_4.79} implies
 \begin{align}
 \label{eq_4.80}
 -\tfrac{5{\nu}^2}{4pH_0\tau_2}
 e^{\tau-\tau_2}
 <
 b_0(\tau)
 <
 -\tfrac{3{\nu}^2}{4pH_0\tau_2}
 e^{\tau-\tau_2}
 \qquad
 \text{for }
 \tau\in(\tau_2,\tilde\tau_3).
 \end{align}
 From the definition of $\tau_3$ (see \eqref{eq_4.68}),
 we deduce from \eqref{eq_4.80} that
 \begin{align*}
 0
 >
 b_0(\tau)
 &>
 -\tfrac{5{\nu}^2}{4pH_0\tau_2}
 e^{\tau_3-\tau_2}
 =
 -\tfrac{5}{4pH_0L_1\tau_2}
 =
 -\tfrac{5\kappa n}{4pH_0\tau_2}
 \\
 &>
 -\tfrac{5\kappa n}{2pH_0\tau}
 \qquad
 \text{for }
 \tau\in(\tau_2,\tilde\tau_3).
 \end{align*}
 This shows (H1).
 To derive (H2),
 we take an inner product $\langle\cdot,H_{1,J}\rangle_\rho$ in \eqref{eq_4.70}.
 \begin{align}
 \label{eq_4.81}
 \tfrac{d}{d\tau}
 \langle W,H_{1,J}\rangle_\rho
 =
 \tfrac{1}{2}
 \langle W,H_{1,J}\rangle_\rho
 +
 \langle N,H_{1,J}\rangle_\rho.
 \end{align}
 From Lemma \ref{lemma_A.3} and (h1) - (h6),
 we get
 \begin{align}
 \label{eq_4.82}
 |\langle N,H_{1,J}\rangle_\rho|
 &<
 C\tau^{-2}
 \qquad
 \text{for }
 \tau\in(\tau_2,\tilde\tau_3).
 \end{align}
 Hence
 integrating \eqref{eq_4.81}
 together with \eqref{eq_4.82} and (i3),
 we get
 \begin{align}
 \nonumber
 |b_{1,J}(\tau)|
 &<
 \tfrac{C}{\tau_2^2}
 e^{\frac{\tau-\tau_2}{2}}
 <
 \tfrac{C}{\tau_2^2}
 e^{\frac{\tau_3-\tau_2}{2}}
 =
 \tfrac{C}{\tau_2^2}
 \tfrac{1}{\sqrt{L_1{\nu}^2}}
 \\
 \label{eq_4.83}
 &<
 \tfrac{C}{{\nu}\tau_2^2}
 <
 \tfrac{C}{{\nu}\tau^2}
 \qquad
 \text{for }
 \tau\in(\tau_2,\tilde\tau_3).
 \end{align}
 In the last inequality,
 we used \eqref{eq_4.69}.
 Since $\tau_2^{-1}<s_2^{-1}=e^{-\frac{128e}{{\nu^2}}}\ll{\nu}^2$,
 it follows that
 $|b_{1,J}(\tau)|<\frac{{\nu}^2}{2\tau}$ for $\tau\in(\tau_2,\tilde\tau_3)$.
 This shows (H2).
 To obtain a pointwise estimate for $W^\bot(z,\tau)$ (see \eqref{eq_4.62}),
 we repeat \eqref{eq_4.59} - \eqref{eq_4.62}.
 By using (j8) instead of (k8),
 we can derive
 \begin{align}
 \label{eq_4.84}
 \sup_{|z|<4\sqrt{n}}
 |W^\bot(z,\tau)|
 <
 \tfrac{C{\nu}^2}{\tau}
 <
 \tfrac{{\nu}}{\tau}
 \qquad
 \text{for }
 \tau\in(\tau_2,\tilde\tau_3).
 \end{align}
 Once \eqref{eq_4.84} is derived,
 by using the fact that $b_0(\tau)<0$,
 we can confirm (H7) - (H8) in the same way as \eqref{eq_4.64} - \eqref{eq_4.67}.
 Therefore
 (m1) - (m8) are proved.
 We expand $w(z,\tau)$ as
 $w(z,\tau)
 =
 \kappa
 +
 b_0(\tau)H_0
 +
 {\bf b}_{1}(\tau)
 \cdot
 {\bf H}_{1}
 +
 {\bf b}_2(\tau)
 \cdot
 {\bf H}_2
 +
 w^\bot(z,\tau)$.
 From
 \eqref{eq_4.80}, (m2) - (m4) and \eqref{eq_4.84}
 we see that
 \begin{align*}
 w&(z,\tau_3)
 =
 \kappa
 +
 b_0(\tau_3)H_0
 +
 {\bf b}_{1}(\tau_3)
 \cdot
 {\bf H}_{1}
 +
 {\bf b}_{2,JJ}(\tau_3)
 \cdot
 {\bf H}_{2,JJ}
 \\
 &\quad
 +
 {\bf b}_{2,IJ}(\tau_3)
 \cdot
 {\bf H}_{2,IJ}
 +
 w^\bot(z,\tau_3)
 \\
 &<
 \kappa
 -
 \tfrac{3{\nu}^2}{4pH_0\tau_2}
 e^{\tau_3-\tau_2}
 H_0
 +
 {\bf b}_{1}(\tau_3)
 \cdot
 {\bf H}_{1}
 -
 \tfrac{1}{c_p\tau_3}
 \sum_{J=1}^n
 H_{2,JJ}
 \\
 &\quad
 +
 \sum_{J=1}^n
 (b_{2,JJ}(\tau_3)+\tfrac{1}{c_p\tau_3})
 H_{2,JJ}
 +
 {\bf b}_{2,IJ}(\tau_3)
 \cdot
 {\bf H}_{2,IJ}
 +
 w^\bot(z,\tau_3)
 \\
 &<
 \kappa
 -
 \tfrac{3\kappa n}{4p\tau_2}
 -
 \tfrac{1}{c_p\tau_3}
 \sum_{J=1}^n
 H_{2,JJ}
 +
 C{\nu}\tau_3^{-1}
 \qquad
 \text{for }
 |z|<4\sqrt{n}.
 \end{align*}
 From \eqref{eq_4.63},
 we have
 \begin{align*}
 w(z,\tau_3)
 &<
 \kappa
 -
 \tfrac{3\kappa n}{4p\tau_2}
 -
 \tfrac{\kappa}{4p\tau_3}(|z|^2-2n)
 +
 \tfrac{C{\nu}}{\tau_3}
 \\
 &<
 \kappa
 -
 \tfrac{3\kappa n}{4p\tau_2}
 +
 \tfrac{2\kappa n}{4p\tau_3}
 +
 \tfrac{C{\nu}}{\tau_3}
 \\
 &=
 \kappa
 -
 \tfrac{\kappa n}{4p\tau_2}
 +
 \tfrac{C{\nu}}{\tau_3}
 \qquad
 \text{for }
 |z|<4\sqrt{n}.
 \end{align*}
 Combining (m8),
 we obtain conclusion.
 \end{proof}

 \section{Proof of Theorem \ref{thm_2}}
 Let $\varphi(x,t)$ be a solution of
 $\varphi_t=\Delta_x\varphi+|\varphi|^{p-1}\varphi$
 satisfying (a1) - (a6) stated in Theorem \ref{thm_2},
 and  put
 \begin{align*}
 \Phi(z,\tau)
 =
 e^{-\frac{\tau}{p-1}}
 \varphi(e^{-\frac{\tau}{2}}z,T-e^{-\tau}),
 \end{align*}
 here $T$ is the blowup time of $\varphi(x,t)$.
 From Proposition \ref{proposition_B.1},
 we know that
 there exists $\sigma_1\in(-\log T,\infty)$
 such that
 \begin{enumerate}[(s1) ]
 \setlength{\leftskip}{5mm}
 \item 
 $\dis\lim_{\tau\to\infty}
 \tau\|\Phi(z,\tau)-\kappa+\tfrac{1}{c_p\tau}\sum_{J=1}^nH_{2,JJ}
 \|_{H_\rho^1(\R^n)}=0$,

 \item 
 $-e^{-\frac{7\tau}{8(p-1)}}<\Phi(z,\tau)
 <
 \kappa+\tfrac{2n\kappa}{p\tau}
 $
 \quad
 \text{for }
 $(z,\tau)\in\R^n\times(\sigma_1,\infty)$,

 \item 
 there exists $c_1\in(0,1)$ such that
 \begin{align*}
 \sup_{|z|>\sqrt{3n}}
 \Phi(z,\tau)
 &<
 \kappa
 -
 \tfrac{c_1}{\tau}
 \qquad
 \text{for }
 \tau\in(\sigma_1,\infty),
 \end{align*}

 \item 
 there exists $c_2>1$ such that
 \[
 \|\Phi^\bot(z,\tau)\|_{L_\rho^2(\R^n)}<c_2\tau^{-2}
 \qquad
 \text{for }
 \tau\in(\sigma_1,\infty),
 \]

 \item 
 $\dis\sup_{|z|<\tau^\frac{1}{16}}
 |\Phi^\bot(z,\tau)|<\tau^{-\frac{3}{2}}$
 \quad
 \text{for }
 $\tau\in(\sigma_1,\infty)$.
 \end{enumerate}
 Let $u(x,t)$ be a solution of
 $u_t=\Delta_xu+|u|^{p-1}u$ with $u|_{t=0}=u_0$ and
 let $T(u)$ be the blowup time of $u(x,t)$.
 We put
 \begin{align*}
 w(z,\tau)
 =
 e^{-\frac{\tau}{p-1}}
 u(e^{-\frac{\tau}{2}}z,T(u)-e^{-\tau}).
 \end{align*}
 We first prove the following fact.
 \begin{itemize}
 \item 
 (claim 1)
 \quad
 For any $\epsilon\in(0,1)$,
 there exists $\sigma_1\in(1,\infty)$
 such that
 for any $\sigma>\sigma_1$ there exists $\delta_1=\delta_1(\sigma)$
 such that
 if $\|u_0-\varphi_0\|_{L_x^\infty(\R^n)}<\delta_1$,
 then (t1) - (t5) hold true.
 \begin{enumerate}[(t1) ]
 \item 
 $\dis
 \|w(z,\tau)-\kappa+\tfrac{1}{c_p\tau}\sum_{J=1}^nH_{2,JJ
 }\|_\rho
 <\epsilon\tau^{-1}$
 \quad
 for $\tau\in(\sigma-1,\sigma+1)$,

 \item 
 $-2e^{-\frac{7\tau}{8(p-1)}}<w(z,\tau)
 <\kappa+\tfrac{4n\kappa}{p\tau}$
 \quad
 for $z\in\R^n$
 and $\tau\in(\sigma-1,\sigma+1)$,

 \item 
 $\dis\sup_{|z|>\sqrt{3n}}
 w(z,\tau)<\kappa-\tfrac{c_1}{2\tau}$
 \quad
 for $\tau\in(\sigma-1,\sigma+1)$,

 \item 
 $\|w^\bot(z,\tau)\|_\rho<2c_2\tau^{-2}$
 \quad
 for $\tau\in(\sigma-1,\sigma+1)$,

 \item 
 $\dis\sup_{|z|<\tau^\frac{1}{16}}
 |w^\bot(z,\tau)|<2\tau^{-\frac{3}{2}}$
 \quad
 for $\tau\in(\sigma-1,\sigma+1)$.
 \end{enumerate}
 \end{itemize}
 From Lemma \ref{lemma_C},
 for any $\epsilon\in(0,1)$,
 there exists $h_1\in(0,1)$ such that if $\|u_0-\varphi_0\|_{L_x^\infty(\R^n)}<h_1$,
 then
 \begin{align}
 \label{eq_5.1}
 |T(u)-T|<\epsilon.
 \end{align}
 We recall that $T$ is the blowup time of $\varphi(x,t)$.
 Furthermore
 for any fixed $t_1\in(0,T)$ and any $\epsilon\in(0,1)$,
 there exists $h_2\in(0,1)$ ($h_2$ depends on $t_1$) such that
 if $\|u_0-\varphi_0\|_{L_x^\infty(\R^n)}<h_2$,
 then
 \begin{align}
 \label{eq_5.2}
 \sup_{(x,t)\in\R^n\times(0,t_1)}
 |u(x,t)-\varphi(x,t)|
 <
 \epsilon.
 \end{align}
 From the definition of $w(z,\tau)$ and $\Phi(z,\tau)$,
 it holds that
 \begin{align}
 \nonumber
 &
 w(z,\tau)-\Phi(z,\tau)
 \\
 \nonumber
 &=
 e^{-\frac{\tau}{p-1}}
 u(e^{-\frac{\tau}{2}}z,T(u)-e^{-\tau})
 -
 e^{-\frac{\tau}{p-1}}
 \varphi(e^{-\frac{\tau}{2}}z,T-e^{-\tau})
 \\
 \label{eq_5.3}
 &=
 e^{-\frac{\tau}{p-1}}
 u(e^{-\frac{\tau}{2}}z,T(u)-e^{-\tau})
 -
 e^{-\frac{\tau}{p-1}}
 \varphi(e^{-\frac{\tau}{2}}z,T(u)-e^{-\tau})
 \\
 \nonumber
 &\quad
 +
 e^{-\frac{\tau}{p-1}}
 \varphi(e^{-\frac{\tau}{2}}z,T(u)-e^{-\tau})
 -
 e^{-\frac{\tau}{p-1}}
 \varphi(e^{-\frac{\tau}{2}}z,T-e^{-\tau}).
 \end{align}
 The function $e^{-\frac{\tau}{p-1}}\varphi(e^{-\frac{\tau}{2}}z,T(u)-e^{-\tau})$
 in \eqref{eq_5.3} is defined on
 $\{\tau>-\log T;T(u)-e^{-\tau}\in(0,T)\}$.
 For simplicity,
 we write
 \begin{align}
 \label{eq_5.4}
 t(u)
 =
 T(u)-e^{-\tau},
 \qquad
 t
 =
 T-e^{-\tau}.
 \end{align}
 We rewrite \eqref{eq_5.3} as
 \begin{align}
 \label{eq_5.5}
 w(z,\tau)&-\Phi(z,\tau)
 =
 e^{-\frac{\tau}{p-1}}
 \{
 u(e^{-\frac{\tau}{2}}z,t(u))
 -
 \varphi(e^{-\frac{\tau}{2}}z,t(u))
 \}
 \\
 \nonumber
 &\quad
 +
 e^{-\frac{\tau}{p-1}}
 \{
 \varphi(e^{-\frac{\tau}{2}}z,t(u))
 -
 \varphi(e^{-\frac{\tau}{2}}z,t)
 \}.
 \end{align}
 The last term in \eqref{eq_5.5} is computed as
 \begin{align}
 \nonumber
 \sup_{z\in\R^n}
 &|
 \varphi(e^{-\frac{\tau}{2}}z,t(u))-\varphi(e^{-\frac{\tau}{2}}z,t)
 |
 \\
 \nonumber
 &<
 |t(u)-t|
 \sup_{x\in\R^n}
 \int_0^1
 |\varphi_t(x,\theta t(u)+(1-\theta)t)|
 d\theta
 \\
 \label{eq_5.6}
 &<
 C
 |T(u)-T|
 \sup_{\theta\in(0,1)}
 \sup_{x\in\R^n}
 |\varphi_t(x,\theta t(u)+(1-\theta)t)|.
 \end{align}
 Since
 $\sup_{x\in\R^n}|\varphi_t(x,t)|<C(T-t)^{-\frac{p}{p-1}}$
 (see (2.11) in \cite{Giga-Kohn_1987} p. 7),
 the right hand side of \eqref{eq_5.6} is estimated by
 \begin{align}
 \nonumber
 \sup_{x\in\R^n}
 |\varphi_t(x,\underbrace{\theta t+(1-\theta)t(u)}_{=t+(1-\theta)(t(u)-t)})|
 &<
 C
 \{
 T-t-(1-\theta)(t(u)-t)
 \}^{-\frac{p}{p-1}}
 \\
 \label{eq_5.7}
 &<
 C
 \{T-t-(1-\theta)(T(u)-T)\}^{-\frac{p}{p-1}}.
 \end{align}
 We here used a relation $t(u)-t=T(u)-T$ (see \eqref{eq_5.4}).
 From \eqref{eq_5.1},
 for any fixed $\tau_1\in(-\log T,\infty)$,
 there exists $\delta_1\in(0,1)$ such that
 if $\|u_0-\varphi_0\|_{L_x^\infty(\R^n)}<\delta_1$,
 then
 \begin{align}
 \label{eq_5.8}
 |T(u)-T|
 <
 \tfrac{e^{-\tau}}{4}
 =
 \tfrac{T-t}{4}
 \qquad
 \text{for }
 \tau\in(-\log T,\tau_1).
 \end{align}
 Hence
 if $\|u_0-\varphi_0\|_{L_x^\infty(\R^n)}<\delta_1$,
 we get from \eqref{eq_5.7} - \eqref{eq_5.8} that
 \begin{align}
 \label{eq_5.9}
 \sup_{x\in\R^n}
 |\varphi_t(x,\theta t+(1-\theta)t(u))|
 <
 C
 e^{\frac{p\tau}{p-1}}
 \qquad
 \text{for }
 \tau\in(-\log T,\tau_1).
 \end{align}
 Therefore
 if $\|u_0-\varphi_0\|_{L_x^\infty(\R^n)}<\delta_1$,
 we deduce from \eqref{eq_5.6} and \eqref{eq_5.9} that
 \begin{align}
 \label{eq_5.10}
 \sup_{z\in\R^n}
 |
 \varphi(e^{-\frac{\tau}{2}}z,t(u))-\varphi(e^{-\frac{\tau}{2}}z,t)
 |
 <
 C
 e^{\frac{p\tau}{p-1}}
 |T(u)-T|
 \end{align}
 for
 $\tau\in(-\log T,\tau_1)$.
 Combining \eqref{eq_5.2}, \eqref{eq_5.5} and \eqref{eq_5.10},
 we conclude that
 for any fixed $\tau_1\in(-\log T,\infty)$ and any $\epsilon\in(0,1)$,
 there exists $\delta_1\in(0,1)$ such that
 if $\|u_0-\varphi_0\|_{L_x^\infty(\R^n)}<\delta_1$,
 we have
 \begin{align}
 \nonumber
 \sup_{z\in\R^n}
 &
 |w(z,\tau)-\Phi(z,\tau)|
 \\
 \nonumber
 &<
 C
 e^{-\frac{\tau}{p-1}}
 \underbrace{
 \sup_{x\in\R^n}
 |u(x,t(u))-\varphi(x,t(u))|
 }_{<\epsilon}
 \\
 \label{eq_5.11}
 &\quad
 +
 Ce^{\tau_1}
 \underbrace{
 |T(u)-T|
 }_{<\epsilon}
 \qquad
 \text{for }
 \tau\in(-\log T,\tau_1).
 \end{align}
 Furthermore
 since
 \begin{align*}
 w^\bot&(z,\tau)
 -
 \Phi^\bot(z,\tau)
 =
 w(z,\tau)
 -
 \Phi(z,\tau)
 \\
 &\quad
 +
 \langle w(z,\tau)-\Phi(z,\tau),H_0\rangle_\rho
 H_0
 \\
 &\quad
 +
 \langle w(z,\tau)-\Phi(z,\tau),{\bf H}_1\rangle_\rho
 \cdot
 {\bf H}_1(z)
 \\
 &\quad
 +
 \langle w(z,\tau)-\Phi(z,\tau),{\bf H}_2\rangle_\rho
 \cdot
 {\bf H}_2(z),
 \end{align*}
 it holds that
 \begin{align}
 \nonumber
 \sup_{|z|<\tau^\frac{3}{16}}
 &|w^\bot(z,\tau)
 -
 \Phi^\bot(z,\tau)|
 \\
 \nonumber
 &<
 C
 (
 \sup_{z\in\R^n}
 |w(z,\tau)
 -
 \Phi(z,\tau)|
 )
 \sup_{|z|<\tau^\frac{3}{16}}
 (1+|z|^2)
 \\
 \label{eq_5.12}
 &<
 C
 \tau^\frac{3}{8}
 \sup_{z\in\R^n}
 |w(z,\tau)
 -
 \Phi(z,\tau)|
 \end{align}
 and
 \begin{align}
 \label{eq_5.13}
 \|w^\bot(z,\tau)-\Phi^\bot(z,\tau)\|_\rho
 <
 C
 \sup_{z\in\R^n}
 |w(z,\tau)
 -
 \Phi(z,\tau)|.
 \end{align}
 Therefore
 from \eqref{eq_5.11} - \eqref{eq_5.13} and (s1) - (s5),
 for any $\epsilon\in(0,1)$,
 there exists $\sigma_1\in(-\log T,\infty)$
 such that for any $\sigma\in(\sigma_1,\infty)$
 there exists $\delta_1=\delta_1(\sigma)\in(0,1)$
 such that
 if
 $\|u_0-\varphi_0\|_{L_x^\infty(\R^n)}<\delta_1$,
 then it holds that
 \begin{align}
 \nonumber
 &
 \|
 w(z,\tau)-\kappa+\tfrac{1}{c_p\tau}\sum_{J=1}^nH_{2,JJ}
 \|_\rho
 \\
 \nonumber
 &<
 \|
 \Phi(z,\tau)-\kappa+\tfrac{1}{c_p\tau}\sum_{J=1}^nH_{2,JJ}
 \|_\rho
 +
 \|
 w(z,\tau)-\Phi(z,\tau)
 \|_\rho
 \\
 \label{eq_5.14}
 &<
 \tfrac{\epsilon}{2\tau}
 \quad
 \text{for }
 \tau\in(\sigma-1,\sigma+1),
 \end{align}
 \begin{align}
 \label{eq_5.15}
 \begin{cases}
 w(z,\tau)
 >
 \Phi(z,\tau)
 -
 |w(z,\tau)-\Phi(z,\tau)|
 \\
 \hspace{16.25mm}
 >
 \kappa-2e^{-\frac{7\tau}{8(p-1)}}
 \quad
 \text{for }
 \tau\in(\sigma-1,\sigma+1),
 \\
 w(z,\tau)
 <
 \Phi(z,\tau)
 +
 |w(z,\tau)-\Phi(z,\tau)|
 \\
 \hspace{16.25mm}
 <
 \kappa+\tfrac{4n\kappa}{p\tau}
 \quad
 \text{for }
 \tau\in(\sigma-1,\sigma+1),
 \end{cases}
 \end{align}
 \begin{align}
 \nonumber
 \sup_{|z|>\sqrt{3n}}
 |w(z,\tau)|
 &<
 \sup_{|z|>\sqrt{3n}}
 |\Phi(z,\tau)|
 +
 \sup_{|z|>\sqrt{3n}}
 |w(z,\tau)-\Phi(z,\tau)|
 \\
 \label{eq_5.16}
 &<
 \kappa
 -
 \tfrac{c_1}{2\tau}
 \quad
 \text{for }
 \tau\in(\sigma-1,\sigma+1),
 \\
 \nonumber
 \|w^\bot(z,\tau)\|_{L_\rho(\R^n)}
 &<
 \|\Phi^\bot(z,\tau)\|_{L_\rho(\R^n)}
 +
 \|w^\bot(z,\tau)-\Phi^\bot(z,\tau)\|_{L_\rho(\R^n)}
 \\
 \label{eq_5.17}
 &<
 2c_2\tau^{-2}
 \quad
 \text{for }
 \tau\in(\sigma-1,\sigma+1),
 \\
 \nonumber
 \sup_{|z|<\tau^\frac{1}{16}}
 |w^\bot(z,\tau)|
 &<
 \sup_{|z|<\tau^\frac{1}{16}}
 |\Phi^\bot(z,\tau)|
 +
 \sup_{|z|<\tau^\frac{1}{16}} 
 |w^\bot(z,\tau)-\Phi^\bot(z,\tau)|
 \\
 \label{eq_5.18}
 &<
 2\tau^{-\frac{3}{2}}
 \quad
 \text{for }
 \tau\in(\sigma-1,\sigma+1).
 \end{align}
 Estimates \eqref{eq_5.14} - \eqref{eq_5.18}
 prove (t1) - (t5).
 We rewrite \eqref{eq_5.14} as
 \begin{align}
 \label{eq_5.19}
 \|
 \tau\{w(z,\tau)-\kappa\}+\tfrac{1}{c_p}&\sum_{J=1}^nH_{2,JJ}
 \|_{L_\rho^2(\R^n)}
 <
 \tfrac{\epsilon}{2}
 \end{align}
 for $\tau\in(\sigma-1,\sigma+1)$.
 Hence
 from \eqref{eq_5.19},
 we can apply Lemma \ref{lemma_4.1} to a function
 $q(z)=\tau\{w(z,\tau)-\kappa\}$ for any fixed $\tau\in(\sigma-1,\sigma+1)$.
 There exists a function
 $\xi(\tau)\in C([\sigma-1,\sigma+1];\R^n)\cap C^1((\sigma-1,\sigma+1);\R^n)$
 satisfying
 \begin{align}
 \label{eq_5.20}
 \underbrace{
 \langle w(z+\xi(\tau),\tau)-\kappa,H_{1,J}\rangle_\rho
 }_{=\langle w(z+\xi(\tau),\tau),H_{1,J}\rangle_\rho}
 =
 0
 \quad
 &\text{for }
 \tau\in[\sigma-1,\sigma+1],
 \\
 \label{eq_5.21}
 |\xi(\tau)|<c_1\epsilon
 \quad
 &\text{for }
 \tau\in[\sigma-1,\sigma+1].
 \end{align}
 The constant $c_1$ does not depend on $\epsilon,\sigma$.
 We put
 \begin{align}
 \label{eq_5.22}
 v(z,\tau)
 =
 w(z+\xi(\tau),\tau).
 \end{align}
 We next confirm the following.
 \begin{itemize}
 \item 
 (claim 2)
 \quad
 Let $v(z,\tau)$ be defined in \eqref{eq_5.22}.
 There exists $C>0$ independent of $\epsilon,\sigma$
 such that
 \begin{enumerate}[(v1) ]
 \item 
 $\dis\|v(z,\tau)-\kappa+\tfrac{1}{c_p\tau}\sum_{J=1}^nH_{2,JJ}\|_\rho
 <C\epsilon\tau^{-1}$
 \quad
 for $\tau\in(\sigma-1,\sigma+1)$,

 \item 
 $-2e^{-\frac{7\tau}{8(p-1)}}<v(z,\tau)
 <\kappa+\tfrac{4n\kappa}{p\tau}$
 \quad
 for $\tau\in(\sigma-1,\sigma+1)$,

 \item 
 $\dis\sup_{|z|>2\sqrt{n}}
 v(z,\tau)<\kappa$
 \quad
 for $\tau\in(\sigma-1,\sigma+1)$,

 \item 
 $\dis\|v^\bot(z,\tau)\|_{L_\rho^2(\R^n)}
 <\tau^{-\frac{11}{8}}$
 \quad
 for $\tau\in(\sigma-1,\sigma+1)$,

 \item 
 $\dis\sup_{|z|<\frac{1}{2}\tau^\frac{1}{16}}
 |v^\bot(z,\tau)|<4\tau^{-\frac{3}{2}}$
 \quad
 for $\tau\in(\sigma-1,\sigma+1)$,

 \item 
 $\|\nabla_zv^\bot(z,\tau)\|_{L_\rho^2(\R^n)}
 <
 \tau^{-\frac{5}{4}}$
 \quad
 for $\tau\in(\sigma,\sigma+1)$.
 \end{enumerate}
 \end{itemize}
 From the definition of $v(z,\tau)$,
 (v2) is obvious (see (t2)).
 Furthermore from (t3) and $|\xi(\tau)|<c_1\epsilon$ (see \eqref{eq_5.21}),
 (v3) follows.
 We decompose $w(z,\tau)$ as
 \begin{align*}
 w(z,\tau)
 -
 \kappa
 =
 b_0(\tau)H_0(z)
 +
 {\bf b}_1(\tau)\cdot{\bf H}_1(z)
 +
 {\bf b}_2(\tau)\cdot{\bf H}_2(z)
 +
 w^\bot(z,\tau).
 \end{align*}
 From the definition of $v(z,\tau)$,
 we see that
 \begin{align}
 \nonumber
 v&(z,\tau)
 =
 w(z+\xi,\tau)
 \\
 \label{eq_5.23}
 &=
 \underbrace{
 \kappa
 +
 b_0H_0
 +
 {\bf b}_1\cdot{\bf H}_1(z+\xi)
 +
 {\bf b}_2\cdot{\bf H}_2(z+\xi)
 }_{\in\text{span}\{H_0,H_{1,J},H_{2,IJ}\}}
 +
 w^\bot(z+\xi,\tau).
 \end{align}
 Hence
 it follows that
 \begin{align}
 \nonumber
 v^\bot&(z,\tau)
 =
 w^\bot(z+\xi,\tau)
 -
 \langle w^\bot(z+\xi,\tau),H_0\rangle_\rho
 H_0
 \\
 \label{eq_5.24}
 &\quad
 -
 \langle w^\bot(z+\xi,\tau),{\bf H}_1\rangle_\rho
 \cdot{\bf H}_1
 -
 \langle w^\bot(z+\xi,\tau),{\bf H}_2\rangle_\rho
 \cdot{\bf H}_2.
 \end{align}
 For $H\in\{H_0,H_{1,J},H_{2,IJ}\}$,
 we easily verify from (t4) that
 \begin{align}
 \label{eq_5.25}
 |\langle w^\bot(z+\xi,\tau),H_0\rangle_\rho|
 &<
 C
 \|w^\bot(z,\tau)\|_\rho
 \\
 \nonumber
 &<
 C\tau^{-2}
 \qquad
 \text{for }
 \tau\in(\sigma-1,\sigma+1).
 \end{align}
 Therefore
 from \eqref{eq_5.24} - \eqref{eq_5.25} and (t5),
 we obtain (v5).
 Furthermore
 we can derive (v4) from \eqref{eq_5.24}, (v2) and (v5).
 We now prove (v1).
 From the definition of $H_0$, $H_{1,J}$ and $H_{2,IJ}$ (see Section \ref{sec_2}),
 we see that
 \begin{align*}
 H_0(z+\xi)
 &=
 H_0(z)
 =
 H_0,
 \\
 H_{1,J}(z+\xi)
 &=
 \tfrac{{\sf k}_0^n}{\sqrt{2}}(z_J+\xi_J)
 =
 H_{1,J}(z)
 +
 \tfrac{\xi_J}{\sqrt{2}}H_0,
 \\
 H_{2,JJ}(z+\xi)
 &=
 \tfrac{{\sf k}_0^n}{2\sqrt{2}}\{(z_J+\xi_J)^2-2\},
 \\
 &=
 H_{2,JJ}(z)
 +
 \xi_J
 H_{1,J}(z)
 +
 \tfrac{\xi_J^2}{2\sqrt{2}}
 H_0,
 \\
 H_{2,IJ}(z+\xi)
 &=
 \tfrac{{\sf k}_0^n}{2}
 (z_I+\xi_I)(z_J+\xi_J)
 \\
 &=
 H_{2,IJ}(z)
 +
 \tfrac{\xi_J}{\sqrt{2}}
 H_{1,I}(z)
 +
 \tfrac{\xi_I}{\sqrt{2}}
 H_{1,J}(z)
 +
 \tfrac{\xi_I\xi_J}{2}
 H_0.
 \end{align*}
 Since $|\xi(\tau)|<c_1\epsilon$ (see \eqref{eq_5.21}),
 combining these relations, \eqref{eq_5.23} and (v4),
 we can derive (v1).
 Hence
 (v1) and (v4) imply
 \begin{align}
 \label{eq_5.26}
 \|v(z,\tau)-\kappa\|_\rho
 <
 \tfrac{2n}{c_p\tau}
 \qquad
 \text{for }
 \tau\in
 (\sigma-1,\sigma+1).
 \end{align}
 To obtain (v6),
 we apply a standard energy inequality.
 From the definition of $v(z,\tau)$ (see \eqref{eq_5.22}),
 we verify that $v(z,\tau)$ solves
 \begin{align}
 \label{eq_5.27}
 v_\tau
 =
 A_zv
 -
 \tfrac{v}{p-1}
 +
 |v|^{p-1}v
 +
 (
 \underbrace{
 \tfrac{d\xi}{d\tau}
 -
 \tfrac{\xi}{2}
 }_{={\bf c}(\tau)}
 )
 \cdot
 \nabla_zv.
 \end{align}
 Furthermore we put $q(z,\tau)=v(z,\tau)-\kappa$.
 A function $q(z,\tau)$ satisfies
 \begin{align}
 \label{eq_5.28}
 q_\tau
 =
 A_zq
 +
 q
 +
 N(q)
 +
 {\bf c}(\tau)
 \cdot
 \nabla_zq,
 \end{align}
 here $N(q)=|\kappa+q|^{p-1}(\kappa+q)-\kappa^p-p\kappa^{p-1}q$.
 Taking an inner product $\langle\cdot,H_1\rangle$ in \eqref{eq_5.28},
 we get from \eqref{eq_5.20} that
 \begin{align}
 \label{eq_5.29}
 {\bf c}(\tau)
 \cdot
 \langle\nabla_zq,H_{1,J}\rangle_\rho
 =
 -\langle & N(q),H_{1,J} \rangle_\rho
 \\
 \nonumber
 &
 \text{for }
 \tau\in(\sigma-1,\sigma+1).
 \end{align}
 Since the expression of \eqref{eq_5.29} is the same as \eqref{eq_4.16},
 we can borrow estimates given in \eqref{eq_4.16} - \eqref{eq_4.21}.
 In fact,
 since $v(z,\tau)$ satisfies (v1),
 we obtain
 (see \eqref{eq_4.16} - \eqref{eq_4.21})
 \begin{align}
 \label{eq_5.30}
 |{\bf c}(\tau)|
 <
 C\tau
 |\langle N(q),{\bf H}_1\rangle_\rho|
 \qquad
 \text{for }
 \tau\in(\sigma-1,\sigma+1).
 \end{align}
 To get estimates of $|\langle N,{\bf H}_1\rangle_\rho|$ in \eqref{eq_4.17},
 we used the bound of $\|W\|_{H_\rho^1(\R^n)}$.
 However,
 we have not yet obtained the bound of $\|q\|_{H_\rho^1(\R^n)}$
 in this step.
 We now estimate $|\langle N(q),{\bf H}_1\rangle_\rho|$
 from (v1) - (v5).
 Since $|v(z,\tau)|$ is bounded by a universal constant (see (v2)),
 Lemma \ref{lemma_A.3} implies
 $|N(q)|<Cq^2$.
 We decompose $q(z,\tau)$ as
 $q(z,\tau)=\langle q(z,\tau),H_0\rangle_\rho H_0
 +\langle q(z,\tau),{\bf H}_2\rangle_\rho\cdot{\bf H}_2+q^\bot(z,\tau)$,
 and use (v2) and (v5) to get
 \begin{align*}
 &|\langle N(q),H_{1,J}\rangle_\rho|
 <
 C
 |\langle q^2,H_{1,J}\rangle_\rho|
 \\
 &<
 C
 \underbrace{
 \|(\langle q(\tau),H_0\rangle_\rho H_0
 +\langle q(\tau),{\bf H}_2\rangle_\rho\cdot{\bf H}_2)^2
 \|_\rho
 }_{<C\tau^{-2}}
 \|H_{1,J}\|_\rho
 \\
 &\quad
 +
 C
 |\langle(q^\bot)^2,H_{1,J}\rangle_\rho|
 \\
 &<
 \tfrac{C}{\tau^2}
 +
 C
 \underbrace{
 |
 \langle
 (q^\bot)^2{\bf 1}_{|z|<\tau^\frac{1}{32}},
 H_{1,J}\rangle_\rho|}_{<C\tau^{-3} \ (\text{see } (v5))}
 +
 C
 |\langle
 (q^\bot)^2{\bf 1}_{|z|>\tau^\frac{1}{32}},
 H_{1,J}\rangle_\rho
 |.
 \end{align*}
 Using
 $q(z,\tau)=\langle q(z,\tau),H_0\rangle_\rho H_0
 +\langle q(z,\tau),{\bf H}_2\rangle_\rho\cdot{\bf H}_2+q^\bot(z,\tau)$
 again in the last term,
 we get from (v2) that
 \begin{align*}
 |\langle
 (q^\bot)^2{\bf 1}_{|z|>\tau^\frac{1}{32}},
 &\,H_{1,J}\rangle_\rho|
 <
 C\langle q(z,\tau),H_0\rangle_\rho^2
 +
 C|\langle q(z,\tau),{\bf H}_2\rangle_\rho|^2
 \\
 &\quad
 +
 C
 |\langle
 {\bf 1}_{|z|>\tau^\frac{1}{32}},
 H_{1,J}\rangle_\rho|
 <
 C\tau^{-2}.
 \end{align*}
 Therefore
 it follows that
 \begin{align*}
 |\langle N(q),H_{1,J}\rangle_\rho|
 <
 \tfrac{C}{\tau^2}
 \qquad
 \text{for }
 \tau\in(\sigma-1,\sigma+1).
 \end{align*}
 Applying this estimate in \eqref{eq_5.30},
 we obtain
 \begin{align}
 \label{eq_5.31}
 |{\bf c}(\tau)|
 <
 C\tau^{-1}
 \qquad
 \text{for }
 \tau\in(\sigma-1,\sigma+1).
 \end{align}
 From \eqref{eq_5.28},
 $q^\bot(z,\tau)$ satisfies
 \begin{align}
 \label{eq_5.32}
 q_\tau^\bot
 =
 A_zq^\bot
 +
 q^\bot
 +
 N(q)^\bot
 +
 {\bf c}(\tau)
 \cdot
 \nabla_zq^\bot.
 \end{align}
 Multiplying \eqref{eq_5.32}
 by $q^\bot$,
 we get
 \begin{align}
 \nonumber
 \tfrac{1}{2}
 \tfrac{d}{d\tau}
 \|q^\bot\|_\rho^2
 &<
 -
 \|\nabla_zq^\bot\|_\rho^2
 +
 \|q^\bot\|_\rho^2
 +
 \|N(q)\|_\rho^2
 \\
 \nonumber
 &\quad
 +
 |{\bf c}(\tau)|
 \|\nabla_zq^\bot\|_\rho
 \|q^\bot\|_\rho
 \\
 \label{eq_5.33}
 &<
 -
 \tfrac{1}{8}
 \|\nabla_zq^\bot\|_\rho^2
 -
 \tfrac{1}{16}
 \|q^\bot\|_\rho^2
 +
 \|N(q)\|_\rho^2.
 \end{align}
 The last term on the right hand side of \eqref{eq_5.33}
 is bounded by
 $\|N(q)\|_\rho^2<C\|q^2\|_\rho^2$
 and
 \begin{align*}
 \|q^2\|_\rho^2
 &=
 \langle q^4{\bf 1}_{|z|<\tau^\frac{1}{32}},1\rangle_\rho
 +
 \langle q^4{\bf 1}_{|z|>\tau^\frac{1}{32}},1\rangle_\rho
 \\
 \nonumber
 &<
 C
 \underbrace{
 (|b_0|+|{\bf b}_1|+|{\bf b}_2|)^4
 }_{C\tau^{-4}}
 +
 \underbrace{
 \langle (q^\bot)^4{\bf 1}_{|z|<\tau^\frac{1}{32}},1\rangle_\rho
 }_{\tau^{-3}\|q^\bot\|_\rho^2 \ (\text{see } (v5))}
 \\
 &\quad
 +
 C
 \langle {\bf 1}_{|z|>\tau^\frac{1}{32}},1\rangle_\rho.
 \end{align*}
 Hence
 we deduce that
 \begin{align}
 \label{eq_5.34}
 \|N(q)\|_\rho^2
 &<
 C\tau^{-4}
 +
 C\tau^{-3}
 \|q^\bot\|_\rho^2.
 \end{align}
 Substituting \eqref{eq_5.34} to \eqref{eq_5.33},
 we obtain
 \begin{align}
 \label{eq_5.35}
 \tfrac{1}{2}
 \tfrac{d}{d\tau}
 \|q^\bot\|_\rho^2
 &<
 -
 \tfrac{1}{8}
 \|\nabla_zq^\bot\|_\rho^2
 -
 \tfrac{1}{32}
 \|q^\bot\|_\rho^2
 +
 C\tau^{-4}.
 \end{align}
 Let $s\in[\sigma,\sigma+1]$.
 Integrating \eqref{eq_5.35} over $\tau\in(s-1,s)$,
 we get
 \begin{align}
 \nonumber
 \tfrac{1}{8}
 \int_{s-1}^s
 \|\nabla_zq^\bot(\tau)\|_\rho^2
 d\tau
 &<
 \tfrac{1}{2}
 \|q^\bot(s-1)\|_\rho^2
 +
 \int_{s-1}^s
 \tfrac{C}{\tau^4}d\tau
 \\
 \label{eq_5.36}
 &<
 \tfrac{1}{2}
 \|q^\bot(s-1)\|_\rho^2
 +
 \tfrac{C}{s^4}.
 \end{align}
 We next multiply \eqref{eq_5.32} by $(\tau-s+1)\cdot q^\bot_\tau(z,\tau)$.
 Then
 we have
 \begin{align}
 \nonumber
 (\tau-&\,s+1)
 \|q_\tau^\bot(\tau)\|_\rho^2
 +
 \tfrac{d}{d\tau}
 (
 \tfrac{\tau-s+1}{2}
 \|\nabla_zq^\bot(\tau)\|_\rho^2
 )
 \\
 \label{eq_5.37}
 &<
 \tfrac{1}{2}
 \|\nabla_zq^\bot(\tau)\|_\rho^2
 +
 \tfrac{d}{d\tau}
 (
 \tfrac{\tau-s+1}{2}
 \|q^\bot(\tau)\|_\rho^2
 )
 -
 \tfrac{1}{2}
 \|q^\bot(\tau)\|_\rho^2
 \\
 \nonumber
 &\quad
 +
 (\tau-s+1)
 \|N(\tau)\|_\rho
 \|q_\tau^\bot(\tau)\|_\rho
 \\
 \nonumber
 &\quad
 +
 (\tau-s+1)
 |{\bf c}(\tau)|
 \|\nabla_zq^\bot(\tau)\|_\rho
 \|q_\tau^\bot(\tau)\|_\rho.
 \end{align}
 We apply \eqref{eq_5.31} and \eqref{eq_5.34} in \eqref{eq_5.37}
 to get
 \begin{align}
 \nonumber
 \tfrac{(\tau-s+1)}{4}
 &
 \|q_\tau^\bot(\tau)\|_\rho^2
 +
 \tfrac{d}{d\tau}
 (
 \tfrac{\tau-s+1}{2}
 \|\nabla_zq^\bot(\tau)\|_\rho^2
 )
 \\
 \label{eq_5.38}
 &<
 \|\nabla_zq^\bot(\tau)\|_\rho^2
 +
 \tfrac{d}{d\tau}
 (
 \tfrac{\tau-s+1}{2}
 \|q^\bot(\tau)\|_\rho^2
 )
 -
 \tfrac{1}{4}
 \|q^\bot(\tau)\|_\rho^2
 \\
 \nonumber
 &\quad
 +
 \tfrac{C}{\tau^4}
 \qquad
 \text{for }
 \tau\in(\sigma-1,\sigma+1).
 \end{align}
 Integrating \eqref{eq_5.37} over $\tau\in(s-1,s)$,
 we obtain
 \begin{align}
 \tfrac{1}{2}
 \|\nabla_zq^\bot(s)\|_\rho^2
 \label{eq_5.39}
 &<
 \int_{s-1}^s
 \|\nabla_zq^\bot(s)\|_\rho^2
 ds
 +
 \tfrac{1}{2}
 \|q^\bot(s)\|_\rho^2
 +
 \tfrac{C}{s^4}.
 \end{align}
 Estimates \eqref{eq_5.39}, \eqref{eq_5.36}
 and (v4) shows (v6).
 Our argument until now is summarized as follows.
 \begin{enumerate}[step 1]
 \item
 Let $\varphi(x,t)$ be a solution of 
 $\varphi_t=\Delta_x\varphi+|\varphi|^{p-1}\varphi$
 satisfying (a1) - (a6).
 \item
 Let $u(x,t)$ be a solution of
 $u_t=\Delta_xu+|u|^{p-1}u$ with $u|_{t=0}=u_0$,
 and put
 $w(z,\tau)=e^{-\frac{\tau}{p-1}}u(ze^{-\frac{\tau}{2}},\tau)$.
 \item
 \label{step3}
 For any $\epsilon\in(0,1)$,
 there exists $\sigma_1\in(1,\infty)$
 such that
 for any $\sigma>\sigma_1$ there exists $\delta_1=\delta_1(\sigma)$
 such that
 if $\|u_0-\varphi_0\|_{L_x^\infty(\R^n)}<\delta_1$,
 then
 there exists a function
 $\xi(\tau)\in C([\sigma-1,\sigma+1];\R^n)\cap C^1((\sigma-1,\sigma+1);\R^n)$
 such that
 \begin{align*}
 \langle w(z+\xi(\tau),\tau),H_{1,J}\rangle_\rho
 =0
 \qquad
 \text{for }
 \tau\in[\sigma-1,\sigma+1]
 \end{align*}
 and 
 $v(z,\tau)=w(z+\xi(\tau),\tau)$ satisfies (v1) - (v6).
 The constant $C$ in (v1) is independent of $\epsilon,\sigma$ and $w(z,\tau)$.
 \end{enumerate}
 Our goal is to continue this procedure until $\tau=\infty$.
 Let $\nu_1,\nu_2\in(0,1)$
 be constants
 given in Proposition \ref{proposition_4.3}, Proposition \ref{proposition_4.4},
 respectively,
 and
 put
 $\nu_0=\frac{1}{2}\min\{\nu_1,\nu_2\}$.
 Furthermore
 let
 $\epsilon\in(0,1)$ be a constant given in step 3.
 We now fix
 $\epsilon=\epsilon_0=\nu_0^5$.
 As a consequence of (v1) - (v6),
 there exist $\sigma=\sigma(\epsilon_0)$
 and $\delta=\delta(\sigma)$ such that
 if $\|u_0-\varphi_0\|_{L_x^\infty(\R^n)}<\delta$,
 then
 ($\tilde{\sf k}$0) - ($\tilde{\sf k}$8) hold.
 \begin{enumerate}[($\tilde{\sf k}1$) ]
 \setlength{\leftskip}{5mm}
 \item[($\tilde{\sf k}0$) ] 
 \label{tildek}
 $|\langle v(z,\tau)-\kappa,H_0\rangle_\rho|
 <
 \nu_0^4\tau^{-1}$
 \quad
 for $\tau\in(\sigma,\sigma+1)$
 \item 
 $|\frac{1}{\langle v(z,\tau),H_{2,JJ}\rangle_\rho}+c_p\tau|
 <\nu_0c_p\tau$
 \quad
 for $\tau\in(\sigma,\sigma+1)$
 and all $J$,

 \item 
 $|\langle v(z,\tau),H_{2,IJ}\rangle_\rho|<\nu_0^2\tau^{-1}$
 \quad
 for $\tau\in(\sigma,\sigma+1)$
 and all $I<J$,

 \item 
 $\|v^\bot(\tau)\|_\rho<\nu_0^4\tau^{-1}$
 \quad
 for $\tau\in(\sigma,\sigma+1)$,

 \item 
 $\|\pa_{z_J}v^\bot(\tau)\|_\rho<\nu_0^4\tau^{-1}$
 \quad
 for $\tau\in(\sigma,\sigma+1)$
 and all $J$,

 \item 
 $-2e^{-\frac{7\tau}{8(p-1)}}<v(z,\tau)<\kappa+\tau^{-\frac{1}{2}}$
 \quad
 for
 $z\in\R^n$
 and $\tau\in(\sigma,\sigma+1)$,

 \item 
 $v(z,\tau)<\kappa$
 \quad 
 for
 $|z|>2\sqrt{n}$
 and
 $\tau\in(\sigma,\sigma+1)$,

 \item 
 $\dis\sup_{|z|<8R_0}|v(z,\tau)-\kappa|<
 \sup_{|z|<8R_0}\tfrac{4|\sum_{J=1}H_{2,JJ}(z)|}{c_p\tau}$
 \quad
 for $\tau\in(\sigma,\sigma+1)$,

 \item 
 $\dis\sup_{|z|<16\sqrt{n}}
 |v^\bot(z,\tau)|<4\tau^{-\frac{3}{2}}$
 \quad
 for $\tau\in(\sigma,\sigma+1)$,
 \end{enumerate}
 where
 $R_0=\max\{R_1(\nu_0),R_2(\nu_0)\}$
 ($R_1,R_2$ are constants given in
 Proposition \ref{proposition_4.3}, Proposition \ref{proposition_4.3},
 respectively).
 Furthermore
 from the definition of $v(z,\tau)$,
 we recall that
 \begin{align*}
 \langle v(z,\tau),H_{1,J}\rangle_\rho
 =0
 \qquad
 \text{for }
 \tau\in[\sigma,\sigma+1].
 \end{align*}
 Let $\nu\in(0,1)$ be a constant given in Lemma \ref{lemma_4.1}.
 We can assume $\nu_0\ll\nu$.
 We finally prove claim 3.
 \begin{itemize}
 \item
 (claim 3)
 \quad
 There exists a unique function $\xi(\tau)\in C^1((\sigma,\infty),\R^n)$
 such that
 \begin{align}
 \label{eq_5.40}
 \langle w(z+\xi(\tau),\tau),H_{1,J}\rangle_\rho=0
 \qquad
 \text{for }
 \tau\in(\sigma,\infty).
 \end{align}
 \end{itemize}
 We define $\tau_\text{a}>\sigma$ by
 \begin{align}
 \label{eq_5.41}
 \tau_\text{a}
 &=
 \sup\{
 s>\sigma;
 \xi(\tau) \text{ defined in (claim 3) can be extended}
 \\
 \nonumber
 &\hspace{35mm}
 \text{for } \tau\in[\sigma,s]
 \}.
 \end{align}
 From the construction of $v(z,\tau)$ (see step 3 p. \pageref{step3}),
 we note that $\tau_\text{a}\geq\sigma+1$.
 Assume that $\tau_\text{a}<\infty$.
 We begin
 with the estimate for $\langle w(z+\xi(\tau),\tau)-\kappa,H_0\rangle_\rho$.
 We claim that
 \begin{align}
 \label{eq_5.42}
 -\tfrac{\nu_0^2}{pH_0\tau}
 <
 \langle w(z&+\xi(\tau),\tau)-\kappa,H_0\rangle_\rho
 \\
 \nonumber
 &<
 \tfrac{4p\kappa^{p-1}}{H_0}
 e^{-\frac{3\tau}{4(p-1)}}
 \qquad
 \text{for }
 \tau\in(\sigma,\tau_\text{a}). 
 \end{align}
 We first prove the right hand inequality in \eqref{eq_5.42}.
 From ($\tilde{\sf k}_5$),
 a comparison argument shows
 \begin{align}
 \label{eq_5.43}
 w(z,\tau)>-2e^{-\frac{7\tau}{8(p-1)}}
 \qquad
 \text{for }
 \tau>\sigma.
 \end{align}
 Assume that
 there exists $\sigma_1\in[\sigma,\tau_\text{a}]$
 such that
 \begin{align}
 \label{eq_5.44}
 \langle w(z+\xi(\sigma_1),\sigma_1)-\kappa,H_0\rangle_\rho
 \geq
 \tfrac{4p\kappa^{p-1}}{H_0}e^{-\frac{3\sigma_1}{4(p-1)}}.
 \end{align}
 We put
 \begin{align*}
 {\sf w}_1(z,\tau)
 &=
 w(z+e^\frac{\tau-\sigma_1}{2}\xi(\sigma_1),\tau).
 \end{align*}
 A function ${\sf w}_1(z,\tau)$ satisfies
 $\pa_\tau{\sf w}_1=A_z{\sf w}_1-\frac{{\sf w}_1}{p-1}+|{\sf w}_1|^{p-1}{\sf w}_1$.
 From the definition of ${\sf w}_1(z,\tau)$,
 it is clear that ${\sf w}_1(z,\sigma_1)=w(z+\xi(\sigma_1),\sigma_1)$.
 Hence it follows from \eqref{eq_5.44} that
 \begin{align*}
 \langle {\sf w}_1(z,\sigma_1)-\kappa,H_0\rangle_\rho
 \geq
 \tfrac{4p\kappa^{p-1}}{H_0}e^{-\frac{3\sigma_1}{4(p-1)}}.
 \end{align*}
 Furthermore
 \eqref{eq_5.43} is equivalent to
 ${\sf w}_1(z,\tau)>-2e^{-\frac{7\tau}{8(p-1)}}$
 for $(z,\tau)\in\R^n\times(\sigma,\infty)$.
 Therefore
 Proposition \ref{proposition_4.2} implies that
 ${\sf w}_1(z,\tau)$ blows up in a finite time $\tau=\tau_\text{b}<\infty$.
 This contradicts
 the fact that ${\sf w}_1(z,\tau)$ is defined on $\tau\in(\sigma,\infty)$.
 Therefore
 the right hand inequality in \eqref{eq_5.42} is proved.
 We next prove the left hand inequality in \eqref{eq_5.42}.
 Assume that
 there exists $\sigma_1\in[\sigma,\tau_\text{a}]$
 such that
 \begin{align}
 \label{eq_5.45}
 \begin{cases}
 \langle w(z+\xi(\tau),\tau)-\kappa,H_0\rangle_\rho
 >
 -\tfrac{\nu_0^2}{pH_0\tau}
 \quad
 \text{for }
 \tau\in(\sigma,\sigma_1),
 \\
 \langle w(z+\xi(\sigma_1),\sigma_1)-\kappa,H_0\rangle_\rho
 =
 -\tfrac{\nu_0^2}{pH_0\tau}.
 \end{cases}
 \end{align}
 We put
 \begin{align*}
 {\sf v}(z,\tau)
 =
 w(z+\xi(\tau),\tau).
 \end{align*}
 We recall that ${\sf v}(z,\tau)$ satisfies
 all assumptions in Proposition \ref{proposition_4.3}:
 \begin{itemize}
 \item
 (k0) for $\tau\in(\sigma,\sigma_1)$
 \quad
 (see \eqref{eq_5.45} and the right hand inequality in \eqref{eq_5.42}),
 \item
 (k1) - (k8) at $\tau=\sigma$
 \quad
 (see ($\tilde{\sf k}1$) - ($\tilde{\sf k}8$))
 and
 \item
 $\langle {\sf v}(z,\tau),H_{1,J}\rangle_\rho=0$
 for $\tau\in(\sigma,\sigma_1)$
 \quad
 (see the definition of $\tau_\text{a}$).
 \end{itemize}
 Therefore
 ${\sf v}(z,\tau)$ satisfies
 (K1) - (K8) stated in Proposition \ref{proposition_4.3}
 for $\tau\in(\sigma,\sigma_1)$,
 which implies that
 ${\sf v}(z,\tau)$ satisfies all assumptions in Proposition \ref{proposition_4.4}
 at $\tau=\sigma_1$:
 \begin{itemize}
 \item
 (j1) - (j8)
 at $\tau=\sigma_1$,
 \item
 (i2)
 at $\tau=\sigma_1$
 \quad
 (see \eqref{eq_5.45})
 and
 \item
 (i3) at $\tau=\sigma_1$
 \quad
 (see the definition of $\tau_\text{a}$).
 \end{itemize}
 We again define
 \begin{align*}
 {\sf w}_1(z,\tau)
 =
 w(z+e^\frac{\tau-\sigma_1}{2}\xi(\sigma_1),\tau).
 \end{align*}
 This function ${\sf w}_1(z,\tau)$ satisfies
 $\pa_\tau{\sf w}_1=A_z{\sf w}_1-\frac{{\sf w}_1}{p-1}+|{\sf w}_1|^{p-1}{\sf w}_1$
 and
 ${\sf w}_1(z,\sigma_1)={\sf v}(z,\sigma_1)$.
 Therefore
 Proposition \ref{proposition_4.4} implies that
 there exists $\sigma_\text{b}>\sigma_1$
 such that
 ${\sf w}_1(z,\sigma_\text{b})<\kappa$
 for $z\in\R^n$.
 As a consequence,
 $w(z,\tau)\to0$ uniformly for $z\in\R^n$,
 namely $u(x,t)$ does not blow up at $t=T(u)$.
 However this contradicts the definition of $T(u)$,
 which proves the left hand inequality of \eqref{eq_5.42}.
 Therefore \eqref{eq_5.42} is proved.
 Considering \eqref{eq_5.42} as a condition (k0)
 in Proposition \ref{proposition_4.3},
 we again apply Proposition \ref{proposition_4.3} to
 ${\sf v}(z,\tau)=w(z+\xi(\tau),\tau)$
 in $\tau\in(\sigma,\tau_\text{a})$.
 As a consequence,
 we see that
 ${\sf v}(z,\tau)=w(z+\xi(\tau),\tau)$ satisfies (K1) - (K8)
 stated in Proposition \ref{proposition_4.3}
 for $\tau\in(\sigma,\tau_\text{a})$.
 We remark that
 constants in (K1)- (K8) do not depend on $\tau_\text{a}$.
 Therefore
 we can verify that
 \begin{align}
 \label{eq_5.46}
 \|
 \tau_\text{a}
 \{w(z+\xi(\tau_\text{a}),\tau_\text{a})
 -
 \kappa
 \}
 +
 \tfrac{1}{c_p}
 \sum_{J=1}^n
 H_{2,JJ}
 \|_\rho
 <
 C\nu_0
 \ll
 \nu.
 \end{align}
 We now put
 \begin{align*}
 q(z,\tau)
 =
 w(z+\xi(\tau_\text{a}),\tau)
 -
 \kappa.
 \end{align*}
 We note that
 \eqref{eq_5.46} is equivalent to
 $\|
 \tau_\text{a}
 q(z,\tau_\text{a})
 +
 \tfrac{1}{c_p}
 \sum_{J=1}^n
 H_{2,JJ}
 \|_\rho
 \ll
 \nu$.
 Hence
 by the continuity of solutions,
 there exists $h_1\in(0,1)$ such that
 $\|
 \tau
 q(z,\tau)
 +
 \tfrac{1}{c_p}
 \sum_{J=1}^n
 H_{2,JJ}
 \|_\rho
 <\frac{\nu}{2}$
 for $\tau\in(\tau_\text{a}-h_1,\tau_\text{a}+h_1)$.
 Therefore
 from Lemma \ref{lemma_4.1},
 there exists
 $\xi_1(\tau)\in C^1((\tau_\text{a}-h_1,\tau_\text{a}-h_1);\R^n)$
 such that
 \begin{align}
 \label{eq_5.47}
 \langle q(z+\xi_1(\tau),\tau),H_{1,J}\rangle_\rho
 &=
 0
 \quad
 \text{for }
 \tau\in(\tau_\text{a}-h_1,\tau_\text{a}+h_1),
 \\
 \nonumber
 \xi_1(\tau_\text{a})
 &=
 0.
 \end{align}
 We can write \eqref{eq_5.47} as
 \begin{align*}
 \langle
 w(z+\xi(\tau_\text{a})+\xi_1(\tau),\tau)-\kappa,H_{1,J}
 \rangle_\rho
 =
 0
 \qquad
 \text{for }
 \tau\in(\tau_\text{a}-h_1,\tau_\text{a}+h_1).
 \end{align*}
 From the uniqueness part of Lemma \ref{lemma_4.1},
 it holds that
 \begin{align}
 \label{eq_5.48}
 \xi(\tau)
 =
 \xi(\tau_\text{a})+\xi_1(\tau)
 \qquad
 \text{for }
 \tau\in(\tau_\text{a}-h_1,\tau_\text{a}).
 \end{align}
 We define
 \begin{align*}
 \xi_\text{p}(\tau)
 =
 \begin{cases}
 \xi(\tau) & \text{for } \tau\in(\sigma,\tau_\text{a}),
 \\
 \xi(\tau_\text{a})+\xi_1(\tau)
 & \text{for } \tau\in(\tau_\text{a},\tau_\text{a}+h_1).
 \end{cases}
 \end{align*}
 From \eqref{eq_5.48},
 it is clear that
 $\xi_\text{p}(\tau)\in C^1((\sigma,\tau_\text{a}+h_1);\R^n)$
 and
 \begin{align*}
 \langle w(z+\xi_\text{p}(\tau),\tau),H_{1,J}\rangle_\rho=0
 \qquad
 \text{for }
 \tau\in(\sigma,\tau_\text{a}+h_1).
 \end{align*}
 This contradicts the definition of $\tau_\text{a}$ (see \eqref{eq_4.52}).
 Therefore
 (claim 3) is proved.
 For the same reason as \eqref{eq_5.42},
 the following estimate holds true.
 \begin{align*}
 -\tfrac{\nu_0^2}{pH_0\tau}
 <
 \langle w(z&+\xi(\tau),\tau)-\kappa,H_0\rangle_\rho
 \\
 &<
 \tfrac{4p\kappa^{p-1}}{H_0}
 e^{-\frac{3\tau}{4(p-1)}}
 \qquad
 \text{for }
 \tau\in(\sigma,\infty).
 \end{align*}
 Therefore
 from Proposition \ref{proposition_4.3},
 we conclude that
 ${\sf v}(z,\tau)
 =
 w(z+\xi(\tau),\tau)$
 satisfies
 (K1) - (K8) for $\tau\in(\sigma,\infty)$.
 We rewrite the relation for $\xi(\tau)$
 (see \eqref{eq_5.27} and \eqref{eq_5.31}).
 \begin{align}
 \label{eq_5.49}
 \tfrac{d\xi}{d\tau}
 -
 \tfrac{\xi}{2}
 =
 {\bf c}(\tau)
 \qquad
 \text{with }
 |{\bf c}(\tau)|<C\tau^{-1}.
 \end{align}
 Integrating \eqref{eq_5.49} over $\tau\in(\tau,\infty)$,
 we get
 \begin{align}
 \label{eq_5.50}
 \underbrace{
 \lim_{s\to\infty}
 e^{-\frac{s}{2}}
 \xi(s)
 }_{=\xi_\infty}
 -
 e^{-\frac{\tau}{2}}
 \xi(\tau)
 =
 \int_\tau^\infty
 e^{-\frac{s}{2}}
 {\bf c}(s)
 ds.
 \end{align}
 We now put
 \begin{align*}
 {\sf w}_\infty(z,\tau)
 &=
 e^{-\frac{\tau}{p-1}}
 u(ze^{-\frac{\tau}{2}}+\xi_\infty,T(u)-e^{-\tau}).
 \end{align*}
 From \eqref{eq_5.50},
 ${\sf w}_\infty(z,\tau)$ can be expressed as
 \begin{align*}
 {\sf w}_\infty(z,\tau)
 &=
 w(z+e^{\frac{\tau}{2}}\xi_\infty,T(u)-e^{-\tau})
 \\
 &=
 w(z+\xi(\tau)+
 \underbrace{e^\frac{\tau}{2}\int_\tau^\infty e^{-\frac{s}{2}}{\bf c}(s)ds
 }_{<C\tau^{-1}},
 T(u)-e^{-\tau}).
 \end{align*}
 Since
 ${\sf v}(z,\tau)=w(z+\xi(\tau),\tau)$
 satisfies (K1) - (K8) for $\tau\in(\sigma,\infty)$,
 ${\sf w}_\infty(z,\tau)$ also satisfies (K1) - (K8) for $\tau\in(\sigma,\infty)$.
 This proves Theorem \ref{thm_2}.
 A blowup point of $u(x,t)$ is given by $\xi_\infty$.

 \appendix
 \section{Elementary computation for nonlinear term}
 We here provide elementary computations
 to derive \eqref{eq_4.39} - \eqref{eq_4.40}
 in the proof of Proposition of \ref{proposition_4.3}.
 \begin{lem}
 \label{lemma_A.1}
 It holds that
 \begin{align}
 \label{equation_A.1}
 \langle H_{2,JJ},H_{2,JJ}^2 \rangle_\rho
 &=
 2
 \sqrt{2}
 {\sf k}_0^n
 \qquad
 \text{\rm for all } J,
 \\
 \label{equation_A.2}
 \langle H_{2,JJ},H_{2,IJ}^2 \rangle_\rho
 &=
 \sqrt{2}
 {\sf k}_0^n
 \qquad
 \text{\rm for all } I<J.
 \end{align}
 \end{lem}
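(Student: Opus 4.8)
The plan is to reduce both identities to one–dimensional Gaussian moment computations, exploiting that $\rho(z)=\prod_{i=1}^n e^{-z_i^2/4}$ factorizes over the coordinates. First I would record the elementary normalization: since ${\sf k}_0=(4\pi)^{-1/4}$ we have $\int_{\R}e^{-s^2/4}\,ds={\sf k}_0^{-2}$, and more generally the even–moment formula $\int_{\R}s^{2k}e^{-s^2/4}\,ds=(2k-1)!!\,2^k\,{\sf k}_0^{-2}$ for $k\ge0$ (odd moments vanish by symmetry); in particular for $k=0,1,2,3$ this gives ${\sf k}_0^{-2}$, $2{\sf k}_0^{-2}$, $12{\sf k}_0^{-2}$, $120{\sf k}_0^{-2}$. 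Since $\rho$ is symmetric under permutation of coordinates, the value of each bracket is independent of the particular indices $J$ (resp. $I<J$), and in every integral below the coordinates that do not appear in the integrand each contribute a factor ${\sf k}_0^{-2}$.

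For \eqref{equation_A.1} I substitute $H_{2,JJ}(z)=\tfrac{{\sf k}_0^n}{2\sqrt2}(z_J^2-2)$, so that
\[
\langle H_{2,JJ},H_{2,JJ}^2\rangle_\rho
=\frac{{\sf k}_0^{3n}}{16\sqrt2}\int_{\R^n}(z_J^2-2)^3e^{-|z|^2/4}\,dz
=\frac{{\sf k}_0^{3n}}{16\sqrt2}\,{\sf k}_0^{-2(n-1)}\int_{\R}(s^2-2)^3e^{-s^2/4}\,ds .
\]
Expanding $(s^2-2)^3=s^6-6s^4+12s^2-8$ and inserting the moments yields $\int_{\R}(s^2-2)^3e^{-s^2/4}\,ds=(120-72+24-8)\,{\sf k}_0^{-2}=64\,{\sf k}_0^{-2}$, hence the whole expression equals $\tfrac{{\sf k}_0^{3n}}{16\sqrt2}\cdot64\,{\sf k}_0^{-2n}=\tfrac{4}{\sqrt2}\,{\sf k}_0^{n}=2\sqrt2\,{\sf k}_0^n$, which is \eqref{equation_A.1}.

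For \eqref{equation_A.2}, with $I<J$ I use $H_{2,IJ}(z)=\tfrac{{\sf k}_0^n}{2}z_Iz_J$, so $H_{2,JJ}H_{2,IJ}^2=\tfrac{{\sf k}_0^{3n}}{8\sqrt2}\,z_I^2z_J^2(z_J^2-2)$. The integral factorizes into the $z_I$–integral $\int_{\R}s^2e^{-s^2/4}\,ds=2{\sf k}_0^{-2}$, the $z_J$–integral $\int_{\R}s^2(s^2-2)e^{-s^2/4}\,ds=(12-4)\,{\sf k}_0^{-2}=8{\sf k}_0^{-2}$, and the remaining $n-2$ factors contributing ${\sf k}_0^{-2(n-2)}$; multiplying gives $\langle H_{2,JJ},H_{2,IJ}^2\rangle_\rho=\tfrac{{\sf k}_0^{3n}}{8\sqrt2}\cdot2\cdot8\cdot{\sf k}_0^{-2n}=\tfrac{2}{\sqrt2}\,{\sf k}_0^n=\sqrt2\,{\sf k}_0^n$. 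There is no genuine obstacle here; the only point that needs care is the bookkeeping of the powers of ${\sf k}_0$ coming from the ``inert'' coordinates, which is precisely why I keep the normalization $\int_\R e^{-s^2/4}\,ds={\sf k}_0^{-2}$ explicit throughout rather than working with a probability measure.
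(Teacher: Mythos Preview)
Your proof is correct and follows essentially the same approach as the paper: both reduce to one-dimensional Gaussian moments via the product structure of $\rho$, use the identical moment values $\int_{\R}s^{2k}e^{-s^2/4}\,ds={\sf k}_0^{-2},\,2{\sf k}_0^{-2},\,12{\sf k}_0^{-2},\,120{\sf k}_0^{-2}$ for $k=0,1,2,3$, and expand $(z_J^2-2)^3$ (resp.\ $z_I^2 z_J^2(z_J^2-2)$) in the same way. Your write-up is in fact slightly more complete, since you carry the arithmetic through to the final values $2\sqrt{2}\,{\sf k}_0^n$ and $\sqrt{2}\,{\sf k}_0^n$, whereas the paper stops at the substitution step.
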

 \begin{proof}
 We recall that
 (see Section \ref{sec_2})
 \[
 H_{2IJ}(z)
 =
 \begin{cases}
 \frac{{\sf k}_0^n}{2\sqrt{2}}
 (z_I^2-2)
 &
 \text{if } I=J,
 \\[2mm]
 \frac{{\sf k}_0^n}{2}
 z_Iz_J
 &
 \text{if } I<J.
 \end{cases}
 \]
 We compute \eqref{equation_A.1} and \eqref{equation_A.2}
 as follows.
 \begin{align}
 \nonumber
 \langle H_{2,JJ}&,H_{2,JJ}^2 \rangle_\rho
 =
 \tfrac{{\sf k}_0^{3n}}{(2\sqrt{2})^3}
 \langle
 (z_J^2-2),(z_J^2-2)^2
 \rangle_\rho
 \\
 \nonumber
 &=
 \tfrac{{\sf k}_0^{3n}}{(2\sqrt{2})^3}
 \int_{\R^n}
 (z_J^6-6z_J^4+12z_J^2-8)
 \rho
 dz
 \\
 \label{equation_A.3}
 &=
 \tfrac{{\sf k}_0^{3n}}{(2\sqrt{2})^3}
 \left(\int_{\R}e^{-\frac{s^2}{4}}ds\right)^{n-1}
 \int_{\R}
 (t^6-6t^4+12t^2-8)
 e^{-\frac{t^2}{4}}
 dt,
 \\
 \nonumber
 \langle H_{2,JJ}&,H_{2,IJ}^2 \rangle_\rho
 =
 \tfrac{{\sf k}_0^{3n}}{2^3\sqrt{2}}
 \langle
 (z_J^2-2),(z_Iz_J)^2
 \rangle_\rho
 \\
 \label{equation_A.4}
 &=
 \tfrac{{\sf k}_0^{3n}}{2^3\sqrt{2}}
 \left(\int_{\R}e^{-\frac{s^2}{4}}ds\right)^{n-2}
 \left(\int_{\R}t^2e^{-\frac{t^2}{4}}dt\right)
 \left(\int_{\R}(t^4-2t^2)e^{-\frac{t^2}{4}}dt\right).
 \end{align}
 Here we note that
 \begin{align}
 \label{equation_A.5}
 \begin{cases}
 \int_{\R}e^{-\frac{s^2}{4}}ds
 =
 \sqrt{4\pi}
 =
 k_0^{-2},
 \\
 \int_{\R}s^2e^{-\frac{s^2}{4}}ds
 =
 2\int_{\R}e^{-\frac{s^2}{4}}ds
 =
 2k_0^{-2},
 \\
 \int_{\R}s^4e^{-\frac{s^2}{4}}ds
 =
 6\int_{\R}s^2e^{-\frac{s^2}{4}}ds
 =
 12k_0^{-2},
 \\
 \int_{\R}s^6e^{-\frac{s^2}{4}}ds
 =
 10\int_{\R}s^4e^{-\frac{s^2}{4}}ds
 =
 120k_0^{-2}.
 \end{cases}
 \end{align}
 Substituting \eqref{equation_A.5} to \eqref{equation_A.3},
 we obtain \eqref{equation_A.1}.
 In the same way,
 we can derive \eqref{equation_A.2} from \eqref{equation_A.4}.
 \end{proof}

 \begin{lem}
 \label{lemma_A.2}
 For ${\bf b}_2=({\bf b}_{2,JJ},{\bf b}_{2,IJ})\in\R^{n+\frac{n^2-n}{2}}$,
 it holds that
 \begin{align}
 \label{equation_A.6}
 \langle({\bf b}_2\cdot{\bf H}_2)^2,H_{2,JJ}\rangle_\rho
 &=
 2\sqrt{2}{\sf k}_0^n
 b_{2,JJ}^2
 +
 O(|{\bf b}_{2,IJ}|^2)
 \qquad
 \text{\rm for all }
 J,
 \\
 \label{equation_A.7}
 \langle({\bf b}_2\cdot{\bf H}_2)^2,H_{2,IJ}\rangle_\rho
 &=
 2\sqrt{2}
 {\sf k}_0^n
 (b_{2,II}+b_{2,JJ})b_{2,IJ}
 +
 O(|{\bf b}_{2,IJ}|^2)
 \\
 \nonumber
 &\quad
 \text{\rm for all }
 I<J.
 \end{align}
 \end{lem}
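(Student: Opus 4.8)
The plan is to expand the square according to the splitting of ${\bf b}_2\cdot{\bf H}_2$ into its diagonal part $\sum_J b_{2,JJ}H_{2,JJ}$ and its off-diagonal part $\sum_{I<J}b_{2,IJ}H_{2,IJ}$, so that
\[
({\bf b}_2\cdot{\bf H}_2)^2
=
\Bigl(\sum_J b_{2,JJ}H_{2,JJ}\Bigr)^2
+
2\Bigl(\sum_J b_{2,JJ}H_{2,JJ}\Bigr)\Bigl(\sum_{I<J}b_{2,IJ}H_{2,IJ}\Bigr)
+
\Bigl(\sum_{I<J}b_{2,IJ}H_{2,IJ}\Bigr)^2,
\]
and then to take the inner product of each of the three pieces with $H_{2,JJ}$ (for \eqref{equation_A.6}) and with $H_{2,IJ}$, $I<J$ (for \eqref{equation_A.7}). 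The third piece is in every case a finite double sum of terms each of size $O(|b_{2,I'J'}||b_{2,I''J''}|)=O(|{\bf b}_{2,IJ}|^2)$, since the triple integrals $\langle H_{2,I'J'}H_{2,I''J''},H_{2,KK}\rangle_\rho$ and $\langle H_{2,I'J'}H_{2,I''J''},H_{2,IJ}\rangle_\rho$ are finite constants; so it feeds only into the error term, and all the work is in the first two pieces.

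For the diagonal--diagonal piece I would use that $H_{2,KK}(z)=\frac{{\sf k}_0^n}{2\sqrt2}(z_K^2-2)$, so that each triple integral $\langle H_{2,KK}H_{2,LL},H_{2,MM}\rangle_\rho$ factors over coordinates into one-dimensional Gaussian integrals; since $\int_{\R}(z^2-2)e^{-z^2/4}dz=2{\sf k}_0^{-2}-2{\sf k}_0^{-2}=0$ (cf. \eqref{equation_A.5}), any such integral in which some coordinate is carried by exactly one of the three factors vanishes. Hence against $H_{2,JJ}$ only the term $K=L=J$ survives, equal to $\langle H_{2,JJ},H_{2,JJ}^2\rangle_\rho\,b_{2,JJ}^2=2\sqrt2{\sf k}_0^n\,b_{2,JJ}^2$ by \eqref{equation_A.1}; against $H_{2,IJ}$ the whole piece vanishes by parity, because the extra factor $H_{2,IJ}\propto z_Iz_J$ forces an odd power of $z_I$. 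The same parity remark disposes of the cross piece in \eqref{equation_A.6}: pairing $H_{2,KK}$ with $H_{2,I'J'}\propto z_{I'}z_{J'}$ against $H_{2,JJ}$ leaves an odd power of $z_{I'}$, so that contribution is $0$.

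The remaining and only genuinely nontrivial computation is the cross piece in \eqref{equation_A.7}. Against $H_{2,IJ}\propto z_Iz_J$, parity forces $\{I',J'\}=\{I,J\}$, hence $I'=I$, $J'=J$; and within the surviving sum over $K$, the vanishing of $\int_{\R}(z_K^2-2)e^{-z_K^2/4}dz_K$ kills every $K\notin\{I,J\}$, leaving only $K=I$ and $K=J$. For $K=J$ the coefficient is $\langle H_{2,JJ},H_{2,IJ}^2\rangle_\rho=\sqrt2{\sf k}_0^n$ by \eqref{equation_A.2}; for $K=I$ one needs the symmetric statement $\langle H_{2,II},H_{2,IJ}^2\rangle_\rho=\sqrt2{\sf k}_0^n$, obtained from \eqref{equation_A.2} by relabeling, or from the one-line computation $\frac{{\sf k}_0^{3n}}{8\sqrt2}\bigl(\int_{\R}(z^4-2z^2)e^{-z^2/4}dz\bigr)\bigl(\int_{\R}z^2e^{-z^2/4}dz\bigr)({\sf k}_0^{-2})^{n-2}=\sqrt2{\sf k}_0^n$ using \eqref{equation_A.5}. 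Including the binomial factor $2$, the cross piece equals $2\sqrt2{\sf k}_0^n(b_{2,II}+b_{2,JJ})b_{2,IJ}$, which together with the vanishing diagonal--diagonal piece and the $O(|{\bf b}_{2,IJ}|^2)$ off--off piece gives \eqref{equation_A.7}. I do not anticipate a real obstacle here; the only point needing care is the bookkeeping of which triple products survive the parity/orthogonality screening, and noticing that the "symmetric partner" $\langle H_{2,II},H_{2,IJ}^2\rangle_\rho$ of \eqref{equation_A.2} has to be invoked.
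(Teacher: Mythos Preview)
Your proposal is correct and follows essentially the same route as the paper: split ${\bf b}_2\cdot{\bf H}_2$ into its diagonal and off-diagonal parts, expand the square, use parity/factorization to kill the cross term in \eqref{equation_A.6} and the diagonal--diagonal term in \eqref{equation_A.7}, reduce the surviving diagonal--diagonal sum in \eqref{equation_A.6} to the single term $K=L=J$ via $\int_{\R}(z^2-2)e^{-z^2/4}dz=0$, and compute the cross term in \eqref{equation_A.7} exactly as you describe, invoking \eqref{equation_A.2} together with its symmetric partner $\langle H_{2,II},H_{2,IJ}^2\rangle_\rho=\sqrt{2}{\sf k}_0^n$. The paper's proof is the same argument with slightly less commentary on why the vanishing terms vanish.
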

 \begin{proof}
 We begin with \eqref{equation_A.6}.
 We write
 ${\bf b}_2=({\bf b}_{2,LL},{\bf b}_{2,MN})\in\R^{n+\frac{n^2-n}{2}}$
 in this proof.
 \begin{align*}
 \langle&({\bf b}_2\cdot{\bf H}_2)^2,H_{2,JJ}\rangle_\rho
 \\
 &=
 \langle
 ({\bf b}_{2,LL}\cdot{\bf H}_{2,LL}+{\bf b}_{2,MN}\cdot{\bf H}_{2,MN})^2,
 H_{2,JJ}
 \rangle_\rho
 \\
 &=
 \langle
 ({\bf b}_{2,LL}\cdot{\bf H}_{2,LL})^2,H_{2,JJ}
 \rangle_\rho
 +
 \langle
 ({\bf b}_{2,MN}\cdot{\bf H}_{2,MN})^2,H_{2,JJ}
 \rangle_\rho
 \\
 &\quad
 +
 2
 \underbrace{
 \langle
 ({\bf b}_{2,LL}\cdot{\bf H}_{2,LL})({\bf b}_{2,MN}\cdot{\bf H}_{2,MN}),H_{2,JJ}
 \rangle_\rho
 }_{=0}
 \\
 &=
 \sum_{L=1}^n
 b_{2,LL}^2
 \langle
 H_{2,LL}^2,H_{2,JJ}
 \rangle_\rho
 +
 \underbrace{
 2\sum_{L_1<L_2}^n
 b_{2,L_1L_1}b_{2,L_2L_2}
 \langle
 H_{2,L_1L_1}H_{2,L_2L_2},H_{2,JJ}
 \rangle_\rho
 }_{=0}
 \\
 &\quad
 +
 \langle
 ({\bf b}_{2,MN}\cdot{\bf H}_{2,MN})^2,H_{2,JJ}
 \rangle_\rho
 \\
 &=
 b_{2,JJ}^2
 \langle
 H_{2,JJ}^2,H_{2,JJ}
 \rangle_\rho
 +
 \langle
 ({\bf b}_{2,MN}\cdot{\bf H}_{2,MN})^2,H_{2,JJ}
 \rangle_\rho.
 \end{align*}
 Lemma \ref{lemma_A.1} implies \eqref{equation_A.6}.
 In the same way,
 we see that
 \begin{align*}
 \langle&({\bf b}_2\cdot{\bf H}_2)^2,H_{2,IJ})_\rho
 \\
 &=
 \langle
 ({\bf b}_{2,LL}\cdot{\bf H}_{2,LL}+{\bf b}_{2,MN}\cdot{\bf H}_{2,MN})^2,H_{2,IJ}
 \rangle_\rho
 \\
 &=
 \underbrace{
 \langle({\bf b}_{2,LL}\cdot{\bf H}_{2,LL})^2,H_{2,IJ}
 \rangle_\rho
 }_{=0}
 +
 \langle
 ({\bf b}_{2,MN}\cdot{\bf H}_{2,MN})^2,H_{2,IJ}
 \rangle_\rho
 \\
 &\quad
 +
 2
 \langle
 ({\bf b}_{2,LL}\cdot{\bf H}_{2,LL})({\bf b}_{2,MN}\cdot{\bf H}_{2,MN}),H_{2,IJ}
 \rangle_\rho.
 \end{align*}
 The last term can be given by
 \begin{align*}
 \langle
 ({\bf b}_{2,LL}&\cdot{\bf H}_{2,LL})
 ({\bf b}_{2,MN}\cdot{\bf H}_{2,MN}),
 H_{2,IJ}
 \rangle_\rho
 \\
 &=
 b_{2,IJ}
 \langle
 ({\bf b}_{2,LL}\cdot{\bf H}_{2,LL})H_{2,IJ},H_{2,IJ}
 \rangle_\rho
 \\
 &=
 b_{2,IJ}
 \langle
 (b_{2,II}H_{2,II}+b_{2,JJ}H_{2,JJ})H_{2,IJ},H_{2,IJ}
 \rangle_\rho
 \\
 &=
 \sqrt{2}
 {\sf k}_0^n
 b_{2,IJ}
 (b_{2,II}+b_{2,II}).
 \end{align*}
 The proof is completed.
 \end{proof}

 Define $N(W)$ by
 \begin{align}
 \nonumber
 N(W)
 &=
 |\kappa+W|^{p-1}(\kappa+W)-\kappa^p-p\kappa^{p-1}W
 \\
 \nonumber
 &=
 \tfrac{p(p-1)}{2}
 \kappa^{p-2}
 W^2
 +
 (N-\tfrac{p(p-1)}{2}\kappa^{p-2}W^2)
 \\
 \label{equation_A.8}
 &=
 \tfrac{pW^2}{2\kappa}
 +
 (N-\tfrac{p}{2\kappa}W^2).
 \end{align}

 \begin{lem}
 \label{lemma_A.3}
 Let $p>1$ and $K_1>1$.
 There exists $C_1>1$ {\rm(}depends on $p,K_1${\rm)} such that
 \begin{align*}
 |N(W)|
 <
 C_1W^2
 \qquad
 \text{\rm for }
 |W|<K_1.
 \end{align*}
 \end{lem}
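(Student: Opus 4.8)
The plan is to reduce everything to the scalar function $g(s)=|s|^{p-1}s$ on $\R$. Since $p>1$, one has $g\in C^1(\R)$ with $g'(s)=p|s|^{p-1}$ (and $g'(0)=0$), and by definition $N(W)=g(\kappa+W)-g(\kappa)-g'(\kappa)W$, because $g(\kappa)=\kappa^p$ and $g'(\kappa)=p\kappa^{p-1}$. I would then split the range $|W|<K_1$ into a neighbourhood of $W=0$ and its complement. The reason for the split is that for $1<p<2$ the function $g$ is not $C^2$ at the origin, so a single second–order Taylor estimate across $W=0$ is not available; keeping $\kappa+tW$ uniformly away from $0$ avoids this issue, and on the remaining (bounded, compact) region one can just use continuity.

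On the region $|W|\le\kappa/2$ I would write, using the fundamental theorem of calculus along the segment $t\mapsto\kappa+tW$,
\[
N(W)=\int_0^1\bigl(g'(\kappa+tW)-g'(\kappa)\bigr)W\,dt
=p\,W\int_0^1\bigl(|\kappa+tW|^{p-1}-\kappa^{p-1}\bigr)\,dt.
\]
For $t\in[0,1]$ we have $\kappa+tW\in[\kappa/2,3\kappa/2]$, a compact subset of $(0,\infty)$, on which $s\mapsto s^{p-1}$ is Lipschitz with some constant $L$ depending only on $p$ (since $\kappa$ is a function of $p$). Hence $\bigl||\kappa+tW|^{p-1}-\kappa^{p-1}\bigr|\le L|tW|\le L|W|$, and therefore $|N(W)|\le pL\,W^2$ on this region.

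On the complementary region $\kappa/2\le|W|\le K_1$ — which is empty, and so may simply be ignored, if $K_1\le\kappa/2$ — I would use that $N$ is continuous and this set is compact, so $M:=\max_{\kappa/2\le|W|\le K_1}|N(W)|<\infty$; since $W^2\ge\kappa^2/4$ there, $|N(W)|\le M\le\tfrac{4M}{\kappa^2}W^2$. Taking $C_1=\max\{2,\,pL,\,4M/\kappa^2\}$ then gives $|N(W)|<C_1W^2$ for all $|W|<K_1$, with $C_1>1$ and $C_1$ depending only on $p$ and $K_1$. I do not expect any genuine obstacle here; the only point requiring care is precisely this dichotomy, i.e.\ not attempting one global quadratic Taylor bound when $p<2$.
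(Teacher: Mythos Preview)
Your proof is correct; the paper itself gives no details beyond the sentence ``We easily verify this inequality from an elementary computation,'' and your argument is precisely the kind of elementary computation it has in mind. The split into $|W|\le\kappa/2$ (where the integral remainder and Lipschitz bound apply) and $\kappa/2\le|W|\le K_1$ (compactness) is the right way to handle the lack of a global $C^2$ bound when $1<p<2$.
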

 \begin{proof}
 We easily verify this inequality from an elementary computation.
 \end{proof}

 \begin{lem}
 \label{lemma_A.4}
 Let $W(z)\in L_\rho^2(\R^n)$.
 We decompose it as
 $W(z)=b_0H_0+{\bf b}_1\cdot{\bf H}_1+{\bf b}_2\cdot{\bf H}_2+W^\bot(z)$.
 There exists $C>0$ independent of $W(z)$ and $r_1>1$ such that
 \begin{align}
 \label{equation_A.9}
 |
 \langle N(W&),H_{2,JJ}\rangle_\rho
 -
 c_p
 b_{2,JJ}^2
 -
 \tfrac{pH_0}{\kappa}
 b_0b_{2,JJ}
 |
 \\
 \nonumber
 &<
 C
 (
 |{\bf b}_1|^2
 +
 |{\bf b}_{2,IJ}|^2
 +
 |{\bf b}_2|\cdot\|W^\bot\|_\rho
 +
 \|W^\bot\|_{H_\rho^1}^2
 )
 \\
 \nonumber
 &\quad
 +
 |\langle\{N(W)-\tfrac{pW^2}{2\kappa}\}{\bf 1}_{|z|<r_1},H_{2,JJ}\rangle_\rho|
 \\
 \nonumber
 &\quad
 +
 |\langle N(W){\bf 1}_{|z|>r_1},H_{2,JJ}\rangle_\rho|
 +
 |\langle \tfrac{pW^2}{2\kappa}{\bf 1}_{|z|>r_1},H_{2,JJ}\rangle_\rho|
 \\
 \nonumber
 &
 \text{\rm for all }
 J,
 \end{align}
 \begin{align}
 \label{equation_A.10}
 |
 \langle N(W&),H_{2,IJ}\rangle_\rho
 -
 c_p
 (b_{2,II}+b_{2,JJ})
 b_{2,IJ}
 -
 \tfrac{pH_0}{\kappa}
 b_0b_{2,IJ}
 |
 \\
 \nonumber
 &<
 C
 (|{\bf b}_1|^2
 +
 |{\bf b}_{2,IJ}|^2
 +
 |{\bf b}_2|\cdot\|W^\bot\|_\rho
 +
 \|W^\bot\|_{H_\rho^1}^2
 )
 \\
 \nonumber
 &\quad
 +
 |\langle\{N(W)-\tfrac{pW^2}{2\kappa}\}{\bf 1}_{|z|<r_1},H_{2,IJ}\rangle_\rho|
 \\
 \nonumber
 &\quad
 +
 |\langle N(W){\bf 1}_{|z|>r_1},H_{2,IJ}\rangle_\rho|
 +
 |\langle \tfrac{pW^2}{2\kappa}{\bf 1}_{|z|>r_1},H_{2,IJ}\rangle_\rho|
 \\
 \nonumber
 &
 \text{\rm for all }
 I<J.
 \end{align}
 \end{lem}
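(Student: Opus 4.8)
The plan is to isolate, inside $\langle N(W),H\rangle_\rho$ for $H\in\{H_{2,JJ},H_{2,IJ}\}$, exactly the four integrals that already appear on the right-hand sides of \eqref{equation_A.9}--\eqref{equation_A.10}, and to reduce everything else to the quadratic form in ${\bf b}_2,b_0$ via Lemma \ref{lemma_A.2} plus harmless remainders. We may assume the right-hand sides are finite (otherwise there is nothing to prove), so $W^\bot\in H_\rho^1(\R^n)$, hence $W\in H_\rho^1(\R^n)$ since the other pieces of the decomposition are polynomials, and Lemma \ref{lem_3.1} then applies, making all integrals below absolutely convergent.

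First I would use the identity \eqref{equation_A.8}, $N(W)=\tfrac{pW^2}{2\kappa}+(N(W)-\tfrac{pW^2}{2\kappa})$, together with $1={\bf 1}_{|z|<r_1}+{\bf 1}_{|z|>r_1}$, to write
\begin{align*}
\langle N(W),H\rangle_\rho
&=
\langle\tfrac{pW^2}{2\kappa},H\rangle_\rho
+
\langle(N(W)-\tfrac{pW^2}{2\kappa}){\bf 1}_{|z|<r_1},H\rangle_\rho
\\
&\quad
+
\langle N(W){\bf 1}_{|z|>r_1},H\rangle_\rho
-
\langle\tfrac{pW^2}{2\kappa}{\bf 1}_{|z|>r_1},H\rangle_\rho.
\end{align*}
The last three terms are precisely the error terms listed in \eqref{equation_A.9}--\eqref{equation_A.10}, so all that remains is to expand $\langle\tfrac{pW^2}{2\kappa},H\rangle_\rho$.

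For that, I would substitute $W=b_0H_0+{\bf b}_1\cdot{\bf H}_1+{\bf b}_2\cdot{\bf H}_2+W^\bot$ into $W^2$ and split it into the square of the eigenfunction part, the cross term $2(b_0H_0+{\bf b}_1\cdot{\bf H}_1+{\bf b}_2\cdot{\bf H}_2)W^\bot$, and $(W^\bot)^2$. Expanding the square of the eigenfunction part produces six groups; a monomial integrates to zero against the even weight $\rho$ as soon as it has odd total degree, so $b_0H_0({\bf b}_1\cdot{\bf H}_1)$ and $({\bf b}_1\cdot{\bf H}_1)({\bf b}_2\cdot{\bf H}_2)$ drop out by parity, while $b_0^2H_0^2$ drops out because $H_0^2$ is constant, hence $L_\rho^2$-orthogonal to $H$. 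The surviving groups are $b_0H_0({\bf b}_2\cdot{\bf H}_2)$, which by $H_0\equiv{\sf k}_0^n$ and orthonormality in $V_2$ gives, after the factor $\tfrac{p}{2\kappa}$, exactly $\tfrac{pH_0}{\kappa}b_0b_{2,JJ}$ (resp. $\tfrac{pH_0}{\kappa}b_0b_{2,IJ}$); $({\bf b}_2\cdot{\bf H}_2)^2$, evaluated by Lemma \ref{lemma_A.2} and, after $\tfrac{p}{2\kappa}$ and the identity $c_p=\tfrac{\sqrt{2}\,p{\sf k}_0^n}{\kappa}$, equal to $c_pb_{2,JJ}^2+O(|{\bf b}_{2,IJ}|^2)$ (resp. $c_p(b_{2,II}+b_{2,JJ})b_{2,IJ}+O(|{\bf b}_{2,IJ}|^2)$); and $({\bf b}_1\cdot{\bf H}_1)^2$, contributing a harmless $O(|{\bf b}_1|^2)$.

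It then remains to estimate the $W^\bot$-terms. The contribution of $b_0H_0W^\bot$ vanishes since $W^\bot$ is $L_\rho^2$-orthogonal to $H$; the terms with $({\bf b}_1\cdot{\bf H}_1)W^\bot$ and $({\bf b}_2\cdot{\bf H}_2)W^\bot$ are bounded, by Cauchy--Schwarz with the polynomial factors absorbed by $\rho$, by $C|{\bf b}_1|\,\|W^\bot\|_\rho$ and $C|{\bf b}_2|\,\|W^\bot\|_\rho$, the former being then absorbed into $|{\bf b}_1|^2+\|W^\bot\|_{H_\rho^1(\R^n)}^2$ by Young's inequality; and $|\langle(W^\bot)^2,H\rangle_\rho|\le C\langle(W^\bot)^2(1+|z|^2),1\rangle_\rho\le C\|W^\bot\|_{H_\rho^1(\R^n)}^2$ by Lemma \ref{lem_3.1}. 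Collecting all contributions yields \eqref{equation_A.9}--\eqref{equation_A.10}. Nothing here is deep; the one point that needs care is the bookkeeping of which of the products survive the parity and orthogonality cancellations, which I would organize as the explicit six-term case split above rather than by expanding the square in full.
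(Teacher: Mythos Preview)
Your proposal is correct and follows essentially the same route as the paper: split $N(W)=\tfrac{pW^2}{2\kappa}+(N-\tfrac{pW^2}{2\kappa})$ with the cutoff ${\bf 1}_{|z|<r_1}+{\bf 1}_{|z|>r_1}$, expand $W^2$ along the eigen-decomposition, extract the two main terms $c_pb_{2,JJ}^2$ (resp.\ $c_p(b_{2,II}+b_{2,JJ})b_{2,IJ}$) via Lemma~\ref{lemma_A.2} and $\tfrac{pH_0}{\kappa}b_0b_{2,IJ}$ from the cross term, and bound the rest by $|{\bf b}_1|^2$, $|{\bf b}_{2,IJ}|^2$, $|{\bf b}_2|\|W^\bot\|_\rho$, $\|W^\bot\|_{H_\rho^1}^2$. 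Your bookkeeping of the parity/orthogonality cancellations and the use of Young's inequality on the $({\bf b}_1\cdot{\bf H}_1)W^\bot$ cross term are exactly what the paper leaves implicit in its one-line ``direct computation''.
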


 \begin{proof}
 A direct computation shows
 \begin{align*}
 \nonumber
 \tfrac{pW^2}{2\kappa}
 &
 =
 \tfrac{p}{2\kappa}
 (
 b_0
 H_0
 )^2
 +
 \tfrac{p}{2\kappa}
 (
 {\bf b}_1
 \cdot
 {\bf H}_1
 )^2
 +
 \tfrac{p}{2\kappa}
 (
 {\bf b}_2
 \cdot
 {\bf H}_2
 )^2
 +
 \tfrac{p}{2\kappa}
 (
 W^\bot
 )^2
 \\
 \nonumber
 &
 +
 \tfrac{p}{\kappa}
 (
 b_0
 H_0
 )
 (
 {\bf b}_1
 \cdot
 {\bf H}_1
 )
 +
 \tfrac{p}{\kappa}
 (
 b_0
 H_0
 )
 (
 {\bf b}_2
 \cdot
 {\bf H}_2
 )
 +
 \tfrac{p}{\kappa}
 (
 b_0
 H_0
 )
 W^\bot
 \\
 &
 +
 \tfrac{p}{\kappa}
 (
 {\bf b}_1
 \cdot
 {\bf H}_1
 )
 (
 {\bf b}_2
 \cdot
 {\bf H}_2
 )
 +
 \tfrac{p}{\kappa}
 (
 {\bf b}_1
 \cdot
 {\bf H}_1
 )
 W^\bot
 +
 \tfrac{p}{\kappa}
 (
 {\bf b}_2
 \cdot
 {\bf H}_2
 )
 W^\bot.
 \end{align*}
 Hence we get
 \begin{align}
 \label{equation_A.11}
 |
 \langle N(W),&\,H_{2,IJ}\rangle_\rho
 -
 \tfrac{p}{2\kappa}
 \langle({\bf b}_2\cdot{\bf H}_2)^2,H_{2,IJ}\rangle_\rho
 -
 \tfrac{pH_0}{\kappa}
 b_0b_{2,IJ}
 |
 \\
 \nonumber
 &\quad
 <
 C
 (|{\bf b}_1|^2
 +
 |{\bf b}_2|\cdot\|W^\bot\|_\rho
 +
 \|W^\bot\|_{H_\rho^1}^2
 )
 \\
 \nonumber
 &\qquad
 +
 |\langle \tfrac{pW^2}{2\kappa}{\bf 1}_{|z|>r_1},H_{2,IJ}\rangle_\rho|
 +
 |\langle (N-\tfrac{pW^2}{2\kappa}){\bf 1}_{|z|<r_1},H_{2,IJ}\rangle_\rho|
 \\
 \nonumber
 &\qquad
 +
 |\langle N{\bf 1}_{|z|>r_1},H_{2,IJ}\rangle_\rho|.
 \end{align}
 Combining \eqref{equation_A.11}, Lemma \ref{lemma_A.2} and
 the definition of $c_p$ (see \eqref{eq_2.7}),
 we obtain \eqref{equation_A.9} - \eqref{equation_A.10}.
 \end{proof}

 \section{Estimates for rescaled solutions}
 In this section,
 we investigate the asymptotic behavior of solutions to
 \begin{align*}
 \begin{cases}
 \varphi_t=\Delta \varphi+|\varphi|^{p-1}\varphi
 \qquad
 \text{\rm for }
 (x,t)\in\R^n\times(0,T),
 \\
 \varphi(x,0)
 =
 \varphi_0\in C(\R^n)\cap L^\infty(\R^n).
 \end{cases}
 \end{align*}
 We assume
 \begin{enumerate}[({a}1)]
 \setlength{\leftskip}{5mm}
 \item
 $\varphi(x,t)$ blows up in a finite time $t=T$,
 \item
 $\dis\sup_{t\in(0,T)}\|\varphi(x,t)\|_{L^\infty(|x|>r)}<\infty$ \
 for any fixed $r>0$,
 \item
 there exists $M>0$ such that
 $\varphi(x,t)>-M$ \
 for $x\in\R^n$ and $t\in(0,T)$,
 \item
 $\dis\sup_{t\in(0,T)}(T-t)^\frac{1}{p-1}\|\varphi(x,t)\|_{L_x^\infty(\R^n)}<\infty$,
 \item
 $\dis\lim_{\tau\to\infty}\|\Phi(z,\tau)-\kappa\|_{L_\rho^2(\R^n)}=0$
 \quad
 {\rm(}$\kappa=(p-1)^{-\frac{1}{p-1}}${\rm)},
 \item
 $\dis
 \lim_{\tau\to\infty}
 \tau
 \|
 \Phi(z,\tau)
 -
 \kappa
 +
 \tfrac{1}{c_p\tau}
 \sum_{J=1}^n
 H_{2,JJ}(z)
 \|_{H_\rho^1(\R^n)}
 =0$.
 \end{enumerate}
 The assumptions (a1) - (a6) are the same as in Theorem \ref{thm_2}.
 \begin{pro}
 \label{proposition_B.1}
 Let $\varphi(x,t)$ be a solution of
 $\varphi_t=\Delta_x\varphi+|\varphi|^{p-1}\varphi$
 satisfying {\rm(a1)} - {\rm(a6)}.
 Let $T$ be the blowup time of $\varphi(x,t)$,
 and put
 \begin{align*}
 \Phi(z,\tau)
 =
 e^{-\frac{\tau}{p-1}}
 \varphi(e^{-\frac{\tau}{2}}z,T-e^{-\tau}).
 \end{align*}
 Then
 there exists $\sigma_1\in(-\log T,\infty)$
 such that
 \begin{enumerate}[\rm(s1) ]
 \setlength{\leftskip}{5mm}
 \item 
 $\dis\lim_{\tau\to\infty}
 \tau\|\Phi(z,\tau)-\kappa+\tfrac{1}{c_p\tau}\sum_{J=1}^nH_{2,JJ}
 \|_{H_\rho^1(\R^n)}=0$,

 \item 
 $-e^{-\frac{7\tau}{8(p-1)}}<\Phi(z,\tau)
 <
 \kappa+\tfrac{2n\kappa}{p\tau}
 $
 \quad
 \text{\rm for }
 $(z,\tau)\in\R^n\times(\sigma_1,\infty)$,

 \item 
 there exists $c_1\in(0,1)$ such that
 \begin{align*}
 \sup_{|z|>\sqrt{3n}}
 \Phi(z,\tau)
 &<
 \kappa
 -
 \tfrac{c_1}{\tau}
 \qquad
 \text{\rm for }
 \tau\in(\sigma_1,\infty),
 \end{align*}

 \item 
 there exists $c_2>1$ such that
 \[
 \|\Phi^\bot(z,\tau)\|_{L_\rho^2(\R^n)}<c_2\tau^{-2}
 \qquad
 \text{\rm for }
 \tau\in(\sigma_1,\infty),
 \]

 \item 
 $\dis\sup_{|z|<\tau^\frac{1}{16}}
 |\Phi^\bot(z,\tau)|<\tau^{-\frac{3}{2}}$
 \quad
 \text{\rm for }
 $\tau\in(\sigma_1,\infty)$.
 \end{enumerate}
 \end{pro}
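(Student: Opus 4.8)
The plan is to treat Proposition~\ref{proposition_B.1} as a bootstrap: starting from the $H_\rho^1$-convergence (a6) and the type I bound (a4), upgrade step by step to the pointwise and $\tau^{-2}$-rate estimates (s1)--(s5), reusing the machinery built for Propositions~\ref{proposition_4.3}--\ref{proposition_4.4} with the translation vector ${\bf c}(\tau)$ set to zero (since $\Phi$ is already centred at the blowup point). Statement (s1) is literally hypothesis (a6). First I would record that, decomposing $\Phi-\kappa=b_0H_0+{\bf b}_1\cdot{\bf H}_1+{\bf b}_2\cdot{\bf H}_2+\Phi^\bot$ and using that $\tfrac1{c_p\tau}\sum_JH_{2,JJ}$ lies in the eigenspace $V_2$, (a6) separates into $|b_0|,|{\bf b}_1|,|{\bf b}_{2,IJ}|=o(\tau^{-1})$, $|b_{2,JJ}+\tfrac1{c_p\tau}|=o(\tau^{-1})$ and $\|\Phi^\bot\|_{H_\rho^1}=o(\tau^{-1})$; in particular $\|\Phi-\kappa\|_{H_\rho^1}=O(\tau^{-1})$. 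Since (a4) makes $\Phi(\cdot,\tau)$ uniformly bounded in $L^\infty(\R^n)$, the equation for $\Phi-\kappa$ is uniformly parabolic on bounded sets, and, treating the quadratic nonlinearity as an inhomogeneity (which by a first pointwise bound is $O(\tau^{-2})$ on $\{|z|<2R\}$), the local $L^2$--$L^\infty$ estimate applied to the translated functions $W_{\bf a}$ as in \eqref{eq_4.24}--\eqref{eq_4.30} turns $\|\Phi-\kappa\|_{L_\rho^2}=O(\tau^{-1})$ first into $\sup_{|z|<R}|\Phi-\kappa|=O(\tau^{-1})$ and then, applied to $\Phi-\kappa+\tfrac1{c_p\tau}\sum_JH_{2,JJ}$, into $\sup_{|z|<R}\bigl|\Phi-\kappa+\tfrac1{c_p\tau}\sum_JH_{2,JJ}\bigr|=o(\tau^{-1})$ for every fixed $R$, i.e.\ $\Phi(z,\tau)=\kappa-\tfrac{\kappa}{4p\tau}(|z|^2-2n)+o(\tau^{-1})$ on $\{|z|<R\}$ by \eqref{eq_4.63}.

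For (s4) I would run the weighted $L^2$ energy estimate for $\Phi^\bot$, which solves the analogue of \eqref{eq_4.43} with ${\bf c}\equiv0$. Multiplying by $\Phi^\bot$ and using the spectral gap $\|\nabla_z\Phi^\bot\|_\rho^2\ge\tfrac32\|\Phi^\bot\|_\rho^2$ (valid because $\Phi^\bot\perp V_0\oplus V_1\oplus V_2$), together with $|N|\le C(\Phi-\kappa)^2$ (Lemma~\ref{lemma_A.3}), Lemma~\ref{lem_3.1}, the bound $\|\Phi-\kappa\|_{H_\rho^1}=O(\tau^{-1})$ and the interior estimate above, one gets $\|N\|_\rho\le C\tau^{-2}+(C\tau^{-1/2}+CR^{-1})\|\Phi^\bot\|_{H_\rho^1}$ exactly as in \eqref{eq_4.45}--\eqref{eq_4.48}, hence $\tfrac12\tfrac{d}{d\tau}\|\Phi^\bot\|_\rho^2\le-\tfrac18\|\Phi^\bot\|_\rho^2+C\tau^{-4}$; Gronwall's inequality gives $\|\Phi^\bot(\tau)\|_\rho\le c_2\tau^{-2}$, which is (s4).

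For (s5) I would use the Duhamel representation $\Phi^\bot(\tau)=e^{A_z(\tau-\tau_1)}\Phi^\bot(\tau_1)+\int_{\tau_1}^\tau e^{A_z(\tau-s)}\mathcal N^\bot(s)\,ds$ with lookback time $\tau-\tau_1=\tfrac18\log\tau$, so that balls of radius $\sim e^{(\tau-\tau_1)/2}=\tau^{1/16}$ are reached by Lemma~\ref{lem_3.2}: the homogeneous term is then bounded on $\{|z|<\tau^{1/16}\}$ by $C\|\Phi^\bot(\tau_1)\|_\rho\le C\tau^{-2}$ via (s4); the part of the Duhamel integral over $\{|z|<\tau^{1/16}\}$ contributes $O(\tau^{-2+1/8}\log\tau)$ since $|\mathcal N^\bot|\le C\tau^{-2}(1+|z|^2)$ there, and the part over $\{|z|>\tau^{1/16}\}$ is super-polynomially small by Lemma~\ref{lem_3.3}; altogether $\sup_{|z|<\tau^{1/16}}|\Phi^\bot|<\tau^{-3/2}$, which is (s5). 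The lower bound in (s2) is immediate: (a3) gives $\Phi(z,\tau)>-Me^{-\tau/(p-1)}>-e^{-7\tau/(8(p-1))}$ once $\tau$ is large, or alternatively by comparison with the subsolution $-e^{-7\tau/(8(p-1))}$ of \eqref{eq_4.5}.

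The remaining and most delicate point is the upper part of (s2) and statement (s3). On $\{|z|<R\}$ the interior estimate already gives $\Phi<\kappa+\tfrac{\kappa n}{2p\tau}+o(\tau^{-1})<\kappa+\tfrac{2n\kappa}{p\tau}$ and, on $\{\sqrt{3n}\le|z|<R\}$, $\Phi<\kappa-\tfrac{\kappa n}{4p\tau}+o(\tau^{-1})<\kappa-c_1\tau^{-1}$ for any $c_1<\tfrac{\kappa n}{4p}$; in particular $\Phi<\kappa$ on $\{|z|=R\}$ once $R>\sqrt{2n}$. To reach $\{|z|>R\}$ I would set up a comparison argument on the cylinder $\{|z|>R\}\times(\sigma_1,\infty)$: there $\Phi-\kappa$ solves a linear parabolic equation with potential bounded by (a4), the inner-boundary values are negative by the interior estimate, the far-field behaviour of $\Phi$ is controlled by (a2), and the initial slice $\tau=\sigma_1$ is made admissible by choosing $\sigma_1$ large and invoking the $L_\rho^2$-smallness in (a5); this propagates $\Phi\le\kappa<\kappa+\tfrac{2n\kappa}{p\tau}$ on $\{|z|>R\}$, and a further refinement using that $\kappa-c_1/\tau$ is a supersolution of \eqref{eq_4.5} for $\tau$ large (here $f'(\kappa)=1$ is used) gives $\Phi<\kappa-c_1/\tau$ there as well, completing (s3). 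The hard part is exactly this outer step: $\kappa$ is an \emph{unstable} equilibrium of \eqref{eq_4.5}, so no naive stationary supersolution sits above it and the bound on $\{|z|>R\}$ cannot be obtained in one line; it must be extracted from a genuine space-time comparison whose initial slice is tied to the weighted $L^2$ convergence, and it is also here that the sharp smoothing of Lemma~\ref{lem_3.2} — uniform sup-bounds on balls of radius $e^{(\tau-\tau_1)/2}$, rather than a crude weighted estimate with its exponential loss — is what reconciles the quadratically growing profile $\tfrac{\kappa}{4p\tau}(|z|^2-2n)$ with the Gaussian weight $\rho$.
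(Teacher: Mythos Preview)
Your treatment of (s1), (s4) and the lower bound in (s2) is correct and matches the paper's argument.  For (s5) there is a genuine difference: the paper does not run Duhamel on $\Phi^\bot$ directly, because the forcing $N^\bot$ contains a $(\Phi^\bot)^2$ contribution whose pointwise size on $\{|z|<\tau^{1/16}\}$ is exactly what you are trying to establish, so your bound $|\mathcal N^\bot|\le C\tau^{-2}(1+|z|^2)$ is circular as stated.  The paper instead introduces the nonlinear profile $g(z,\tau)=\kappa\bigl(1-\tfrac{p-1}{\kappa}{\bf b}_2\cdot{\bf H}_2\bigr)^{-1/(p-1)}$ and works with $\Upsilon=\Phi-g$.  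The equation for $\Upsilon$ is \emph{linear} in $\Upsilon$, with a bounded potential $V\le\tfrac{p}{p-1}+D\tau^{-1}$ and an explicit polynomial forcing of size $O(\tau^{-2})(p_0(z)+p_1(z))$; a linear supersolution $\bar\Upsilon$ plus Duhamel then gives the pointwise bound cleanly, and $\Phi^\bot$ is recovered from $\Upsilon$ afterwards.

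The substantive gap is in (s3) and the upper half of (s2).  Your comparison on $\{|z|>R\}\times(\sigma_1,\infty)$ cannot start: at $\tau=\sigma_1$ you have only $L_\rho^2$-smallness of $\Phi-\kappa$, which gives no pointwise sign on the unbounded set $\{|z|>R\}$, and (a2) controls only $|z|>re^{\tau/2}$, leaving the entire intermediate annulus $R<|z|<re^{\tau/2}$ untouched.  The paper covers this region in two separate pieces, both absent from your sketch.  For $\sqrt{3n}<|z|<2\sqrt\tau$ it runs Duhamel on $\Upsilon=\Phi-g$ with the \emph{longer} lookback $\tau-\tau_2=\log\tau$ (so that $e^{(\tau-\tau_2)/2}=\sqrt\tau$), after first quoting an external bound $\Phi<\kappa+ce^{-\gamma\tau}$ on $r<|z|<e^{\tau/4}$ from \cite{Harada} to keep the potential $V$ under control; this yields $|\Upsilon|<C\tau^{-1}\log\tau$ on $|z|<2\sqrt\tau$, and since the profile $g$ itself satisfies $g\le\kappa-c\tau^{-1}$ on this annulus, (s3) follows there.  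For $|z|>\sqrt\tau$ the argument leaves similarity variables entirely: the paper invokes Vel\'azquez's result \cite{Velazquez_higher} giving an explicit profile for $\varphi$ at the intermediate scale $|x|\sim\sqrt{(T-s)|\log(T-s)|}$, and then runs a comparison \emph{in the original variables $(x,t)$} on the space-time region between two such level curves, using the resulting strict bound $\varphi<\kappa(T-t)^{-1/(p-1)}(1+\tfrac{p-1}{16p})^{-1/(p-1)}$ together with (a2) for the far exterior.  Neither the nonlinear profile $g$ nor the passage through Vel\'azquez's intermediate-scale asymptotics appears in your outline, and without them the outer region in (s3) remains open.
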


 \begin{rem}
 Since we could not find references that explicitly provide the proof of
 {\rm(s2) - (s5)} in Proposition {\rm\ref{proposition_B.1}},
 we include the proof below.
 The assertion {\rm(s3)} plays a crucial role in the proofs of
 Proposition {\rm\ref{proposition_4.3}} - Proposition {\rm\ref{proposition_4.4}}.
 \end{rem}

 \begin{proof}
 To obtain (s4),
 we repeat the argument \eqref{eq_4.43} - \eqref{eq_4.51}
 in the proof of Proposition \ref{proposition_4.3}.
 Since $\Phi(z,\tau)\to\kappa$ in $L_\rho^2(\R^n)$ (see (a5)),
 for any $\epsilon\in(0,1)$ and $R_1>1$
 there exists $\sigma_1=\sigma_1(\epsilon,R_1)\gg1$ such  hat
 \begin{align*}
 \sup_{\tau>\sigma_1}
 \sup_{|z|<R_1}
 |\Phi(z,\tau)-\kappa|
 <
 \epsilon.
 \end{align*}
 We use this estimate to derive \eqref{eq_4.47}
 instead of \eqref{eq_4.30}.
 Then
 we can derive (s4) (see \eqref{eq_4.51}).
 Furthermore
 we put $\Psi(z,\tau)=\Phi(z,\tau)-\kappa$.
 We easily verify that
 $\Psi(z,\tau)$ solves
 \begin{align}
 \label{eq_BB.1}
 \Psi_\tau
 =
 A_z\Psi
 +
 \Psi
 +
 N(\Psi),
 \end{align}
 where
 $N(\Psi)=|\kappa+\Psi|^{p-1}(\kappa+\Psi)-\kappa^p-p\kappa^{p-1}\Psi$.
 For simplicity,
 we put
 \begin{itemize}
 \item
 $b_0=\langle \Psi(z,\tau),H_0\rangle_\rho$,
 \item
 $b_{1,J}=\langle \Psi(z,\tau),H_{1,J}\rangle_\rho$
 and
 \item
 $b_{2,IJ}=\langle \Psi(z,\tau),H_{2,IJ}\rangle_\rho$.
 \end{itemize}
 Multiplying \eqref{eq_BB.1} by $H_0$ and $H_{1,J}$,
 we obtain ODEs for $b_0$ and $b_1$, respectively.
 Integrating these ODEs,
 we obtain
 \begin{align}
 \label{eq_BB.2}
 |b_0(\tau)|+|{\bf b}_1(\tau)|
 <
 \tfrac{C}{\tau^2}.
 \end{align}
 Combining (s4) and \eqref{eq_BB.2},
 we have
 \begin{align}
 \label{eq_BB.3}
 \|\Psi(z,\tau)-{\bf b}_2(\tau)\cdot{\bf H}_2\|_\rho
 <
 \tfrac{C}{\tau^2}.
 \end{align}
 We now prove (s5).
 We introduce a profile function.
 \begin{align}
 \label{eq_BB.4}
 g(z,\tau)
 =
 \kappa
 \left(
 1
 -
 \tfrac{p-1}{\kappa}
 {\bf b}_2(\tau)\cdot{\bf H}_2(z)
 \right)^{-\frac{1}{p-1}}.
 \end{align}
 From \eqref{eq_4.63},
 we note that
 \begin{align*}
 \nonumber
 {\bf b}_2
 \cdot
 {\bf H}_2
 &=
 -
 \tfrac{1}{c_p\tau}
 \sum_{J=1}^n
 H_{2,JJ}
 +
 \underbrace{
 \sum_{J=1}^n
 (
 b_{2,JJ}
 +
 \tfrac{1}{c_p\tau}
 )
 H_{2,JJ}
 +
 \sum_{I<J}
 b_{2,IJ}
 H_{2,IJ}
 }_{=o(\frac{1}{\tau})(1+|z|^2)}
 \\
 &=
 -
 \tfrac{\kappa}{4p\tau}
 (|z|^2-2n)
 +
 o(\tfrac{1}{\tau})(1+|z|^2)
 \qquad
 \text{for }
 z\in\R^n.
 \end{align*}
 Hence
 it holds that
 \begin{align}
 \label{eq_BB.5}
 \tfrac{\kappa}{4p\tau}
 (-\tfrac{9}{8}|z|^2+\tfrac{15n}{8})
 <
 {\bf b}_2
 \cdot
 {\bf H}_2
 <
 \tfrac{\kappa}{4p\tau}
 (-\tfrac{7}{8}|z|^2+\tfrac{17n}{8})
 \end{align}
 for
 $z\in\R^n$ and $\tau\gg1$.
 From \eqref{eq_BB.5},
 $g(z,\tau)$ is well defined for $z\in\R^n$.
 The function $g(z,\tau)$ satisfies
 \begin{align}
 \label{eq_BB.6}
 g_\tau
 &=
 A_zg
 -
 \tfrac{g}{p-1}
 +
 g^p
 +
 \underbrace{
 (\tfrac{d{\bf b}_2}{d\tau}\cdot{\bf H}_2)
 \left(
 1
 -
 \tfrac{p-1}{\kappa}
 {\bf b}_2\cdot{\bf H}_2
 \right)^{-\frac{p}{p-1}}
 }_{F_1(z,\tau)}
 \\
 \nonumber
 &\quad
 +
 \underbrace{
 \tfrac{-p}{\kappa}
 |\nabla_z({\bf b}_2\cdot{\bf H}_2)|^2
 \left(
 1
 -
 \tfrac{p-1}{\kappa}
 {\bf b}_2\cdot{\bf H}_2
 \right)^{-\frac{p}{p-1}-1}
 }_{=F_2(z,\tau)}.
 \end{align}
 We put
 \begin{align*}
 \Upsilon(z,\tau)
 =
 \Phi(z,\tau)
 -
 g(z,\tau),
 \end{align*}
 then
 \begin{align}
 \label{eq_BB.7}
 \Upsilon_\tau
 &=
 A_z\Upsilon
 -
 \tfrac{1}{p-1}
 \Upsilon
 +
 V{\bf 1}_{|z|<\tau}\Upsilon
 +
 \underbrace{
 V{\bf 1}_{|z|>\tau}\Upsilon
 }_{=F_3}
 -
 F_1-F_2,
 \end{align}
 here we write
 \begin{align*}
 V=
 \tfrac{|\Phi|^{p-1}\Phi-g^p}{\Phi-g}
 =
 p
 \int_0^1
 |\theta\Phi+(1-\theta)g|^{p-1}
 d\theta.
 \end{align*}
 From the definition of $g(z,\tau)$ (see \eqref{eq_BB.4}) and \eqref{eq_BB.5},
 we note that
 \begin{align}
 \label{eq_BB.8}
 0
 < 
 g(z,\tau)
 <
 \kappa
 +
 C\tau^{-1}.
 \end{align}
 Set
 $\Omega_+
 =
 \{(z,\tau)\in\R^n\times(-\log T,\infty);\Phi(z,\tau)>0\}$
 and
 $\Omega_-
 =
 \{(z,\tau)\in\R^n\times(-\log T,\infty);\Phi(z,\tau)<0\}$.
 From Lemma 3.3 in \cite{Harada} p. 4231,
 there exist $r,\gamma,c>0$ such that
 \begin{align}
 \label{eq_BB.9}
 \Phi(z,\tau)<\kappa+ce^{-\gamma\tau}
 \qquad
 \text{for }
 r<|z|<e^\frac{\tau}{4}.
 \end{align}
 Combining \eqref{eq_BB.9} and (a6),
 we get
 \begin{align}
 \label{eq_BB.10}
 0<\Phi(z,\tau)<\kappa+\tfrac{c}{\tau}
 \qquad
 \text{for }
 (z,\tau)\in\Omega_+\cap\{|z|<\tau\}.
 \end{align}
 Therefore
 from \eqref{eq_BB.8} and \eqref{eq_BB.10},
 it follows that
 \begin{align}
 \nonumber
 V
 &=
 p
 \int_0^1
 (\theta\Phi+(1-\theta)g)^{p-1}
 d\theta
 \\
 \nonumber
 &<
 p
 (\kappa+\tfrac{c}{\tau})^{p-1}
 <
 p\kappa^{p-1}
 +
 \tfrac{D}{\tau}
 \\
 \label{eq_BB.11}
 &<
 \tfrac{p}{p-1}
 +
 \tfrac{D}{\tau}
 \qquad
 \text{for }
 (z,\tau)\in\Omega_+\cap\{|z|<\tau\}.
 \end{align}
 From (a3),
 we note that
 \begin{align*}
 -Me^{-\frac{\tau}{p-1}}<\Phi(z,\tau)<0
 \qquad
 \text{for }
 (z,\tau)\in\Omega_-.
 \end{align*}
 Hence
 it holds that
 \begin{align}
 \nonumber
 V
 &<
 p
 \max\{
 |\Phi|^{p-1},
 g^{p-1}
 \}
 <
 p
 \kappa^{p-1}
 +
 \tfrac{D}{\tau}
 \\
 \label{eq_BB.12}
 &<
 \tfrac{p}{p-1}
 +
 \tfrac{D}{\tau}
 \qquad
 \text{for }
 (z,\tau)\in\Omega_-.
 \end{align}
 We recall that
 there exist $p_0(z)\in V_0$, $p_1(z)\in V_2$
 (see Section \ref{sec_2} for the definition of $V_k$)
 such that
 \begin{align}
 \label{eq_BB.13}
 |z|^2
 <
 p_0(z)+p_1(z)
 \qquad
 \text{for }
 z\in\R^n.
 \end{align}
 Hence
 using \eqref{eq_BB.5},
 we can obtain the bound of $F_1(z,\tau),F_2(z,\tau)$ (see \eqref{eq_BB.6}).
 \begin{align}
 \nonumber
 |F_1(z,\tau)|
 &<
 |\tfrac{d{\bf b}_2}{d\tau}\cdot{\bf H}_2|
 \underbrace{
 (
 1
 -
 \tfrac{p-1}{\kappa}
 {\bf b}_2\cdot{\bf H}_2
 )^{-\frac{p}{p-1}}
 }_{<C}
 \\
 \label{eq_BB.14}
 &<
 C
 |\tfrac{d{\bf b}_2}{d\tau}|
 |z|^2
 <
 C
 |\tfrac{d{\bf b}_2}{d\tau}|
 (p_0(z)+p_1(z)),
 \\
 \nonumber
 |F_2(z,\tau)|
 &<
 2
 |{\bf b}_2|^2
 |\nabla_z{\bf H}_2|^2
 <
 C|{\bf b}_2|^2|z|^2
 \\
 \label{eq_BB.15}
 &<
 C
 |{\bf b}_2|^2
 (p_0(z)+p_1(z)).
 \end{align}
 Multiplying
 \eqref{eq_BB.1}
 by $H_{2,IJ}(z)$,
 we get
 \begin{align*}
 \tfrac{db_{2,IJ}}{d\tau}
 =
 \langle N,H_{2,IJ}\rangle_\rho.
 \end{align*}
 From (a6),
 we easily see that
 \begin{align*}
 |\tfrac{db_{2,IJ}}{d\tau}|
 &<
 C
 \|\Psi\|_{H_\rho^1(\R^n)}^2
 =
 C
 \|(\Phi-\kappa)\|_{H_\rho^1(\R^n)}^2
 \\
 &<
 C\tau^{-2}
 \qquad
 \text{for all }
 I\leq J.
 \end{align*}
 Therefore
 \eqref{eq_BB.14} - \eqref{eq_BB.15} imply
 \begin{align}
 \label{eq_BB.16}
 |F_1(z,\tau)|
 +
 |F_2(z,\tau)|
 <
 \tfrac{C_1}{\tau^2}
 (p_0(z)+p_1(z))
 \end{align}
 for
 $(z,\tau)\in\R^n\times(-\log T,\infty)$.
 Furthermore
 since $\Phi(z,\tau)$ and $g(z,\tau)$ is uniformly bounded on
 $(z,\tau)\in\R^n\times(-\log\tau,\infty)$ (see (a4) and \eqref{eq_BB.8}),
 we can estimate $F_3(z,\tau)$ as
 \begin{align*}
 |F_3(z,\tau)|
 =
 |V|{\bf 1}_{|z|>\tau}|\Upsilon|
 =
 ||\Phi|^{p-1}\Phi-g^p|
 {\bf 1}_{|z|>\tau}
 <
 C_2
 {\bf 1}_{|z|>\tau}.
 \end{align*}
 We now construct a comparison function of \eqref{eq_BB.7}.
 \begin{align}
 \label{eq_BB.17}
 \begin{cases}
 \dis
 \bar\Upsilon_\tau
 =
 A_z
 \bar\Upsilon
 +
 (1+\tfrac{D}{\tau})
 \bar\Upsilon
 +
 \tfrac{C_1}{\tau^2}
 (p_0(z)+p_2(z))
 +
 C_2
 {\bf 1}_{|z|>\tau}
 &
 \text{for }
 \tau>\tau_1,
 \\
 \bar\Upsilon|_{\tau=\tau_1}
 =
 |\Upsilon(\tau_1)|.
 \end{cases}
 \end{align}
 From \eqref{eq_BB.3} and \eqref{eq_BB.4},
 we can verify that
 \begin{align}
 \label{eq_BB.18}
 \|\Upsilon(z,\tau)\|_\rho
 <
 C
 \tau^{-2}
 \qquad
 \text{for }
 \tau\in(-\log T,\infty).
 \end{align}
 We define $\tau_1,\tau_2$ by
 \begin{align*}
 \tau_1(\tau)
 &=
 \tau-\tfrac{1}{8}\log\tau,
 \\
 \tau_2(\tau)
 &=
 \tau-\log\tau.
 \end{align*}
 We use $\tau_1$ for the proof of (s5),
 and $\tau_2$ for the proof of (s3).
 We easily see that
 \begin{align}
 \label{eq_BB.19}
 1<\tfrac{\tau}{\tau_1}<\tfrac{\tau}{\tau_2}<2.
 \end{align}
 The Duhamel formula gives
 \begin{align}
 \label{eq_BB.20}
 \bar\Upsilon(\tau)
 &=
 (\tfrac{\tau}{\tau_i})^D
 e^{\tau-\tau_i}
 e^{A_z(\tau-\tau_i)}
 \bar\Upsilon(\tau_i)
 \\
 \nonumber
 &\quad
 +
 \int_{\tau_i}^\tau
 (\tfrac{\tau}{s})^D
 e^{\tau-s}
 e^{A_z(\tau-s)}
 [\tfrac{C_1}{s^2}(p_0+p_1)]
 ds
 \
 \\
 \nonumber
 &\quad
 +
 \int_{\tau_i}^\tau
 (\tfrac{\tau}{s})^D
 e^{\tau-s}
 e^{A_z(\tau-s)}
 [C_2{\bf 1}_{|z|>s}]
 ds
 \qquad
 (i=1,2).
 \end{align}
 From Lemma \ref{lem_3.2} and \eqref{eq_BB.18},
 it holds that
 \begin{align*}
 \sup_{|z|<2e^\frac{\tau-\tau_i}{2}}
 |e^{A_z(\tau-\tau_i)}\bar\Upsilon(\tau_i)|
 &<
 C\|\bar\Upsilon(\tau_i)\|_\rho
 <
 C\tau_i^{-2}
 \qquad
 (i=1,2).
 \end{align*}
 Hence
 by the choice of $\tau_1,\tau_2$ and \eqref{eq_BB.19},
 we obtain
 \begin{align}
 \label{eq_BB.21}
 \sup_{|z|<2\tau^\frac{1}{16}}
 (\tfrac{\tau}{\tau_1})^D
 e^{\tau-\tau_1}
 |e^{A_z(\tau-\tau_1)}\bar\Upsilon(\tau_1)|
 &<
 e^{\frac{1}{8}\log\tau}
 \tfrac{C}{\tau^2}
 <
 C\tau^{-\frac{15}{8}},
 \\
 \label{eq_BB.22}
 \sup_{|z|<2\tau^\frac{1}{2}}
 (\tfrac{\tau}{\tau_2})^D
 e^{\tau-\tau_2}
 |e^{A_z(\tau-\tau_2)}\bar\Upsilon(\tau_2)|
 &<
 \tfrac{C}{\tau}.
 \end{align}
 We next compute the second term on the right-hand side of \eqref{eq_BB.20}.
 \begin{align*}
 &\int_{\tau_i}^\tau
 (\tfrac{\tau}{s})^D
 e^{\tau-s}
 e^{A_z(\tau-s)}
 [\tfrac{p_0+p_1}{s^2}
 ]
 ds
 \\
 &=
 p_0(z)
 \int_{\tau_i}^\tau
 (\tfrac{\tau}{s})^D
 e^{\tau-s}
 \tfrac{ds}{s^2}
 +
 p_1(z)
 \int_{\tau_i}^\tau
 (\tfrac{\tau}{s})^D
 \tfrac{ds}{s^2}.
 \\
 &<
 C|p_0(z)|
 e^{\tau-\tau_i}
 \tfrac{ds}{\tau_i^2}
 +
 C|p_1(z)|
 \tfrac{\tau-\tau_i}{\tau_i^2}.
 \end{align*}
 Hence
 it follows that
 \begin{align}
 \nonumber
 \int_{\tau_1}^\tau
 (\tfrac{\tau}{s})^D
 e^{\tau-s}
 e^{A_z(\tau-s)}
 [\tfrac{p_0+p_1}{s^2}]
 ds
 &<
 \tfrac{C\tau^\frac{1}{8}}{\tau^2}
 |p_0(z)|
 +
 \tfrac{C\log\tau}{\tau^2}
 |p_1(z)|
 \\
 \label{eq_BB.23}
 &<
 \tfrac{C}{\tau^\frac{15}{8}}
 |p_0(z)|
 +
 \tfrac{C\log\tau}{\tau^2}
 |p_1(z)|,
 \\
 \label{eq_BB.24}
 \int_{\tau_2}^\tau
 (\tfrac{\tau}{s})^D
 e^{\tau-s}
 e^{A_z(\tau-s)}
 [\tfrac{p_0+p_1}{s^2}]
 ds
 &<
 \tfrac{C}{\tau}
 |p_0(z)|
 +
 \tfrac{C\log\tau}{\tau^2}
 |p_1(z)|.
 \end{align}
 We apply Lemma \ref{lem_3.3} to estimate the last term in \eqref{eq_BB.20}.
 \begin{align}
 \label{eq_BB.25}
 \sup_{|z|<2\sqrt{\tau}}
 \int_{\tau_i}^\tau
 (\tfrac{\tau}{s})^D
 e^{\tau-s}
 e^{A_z(\tau-s)}
 [{\bf 1}_{|z|>s}]
 ds
 <
 \tfrac{C}{\tau^2}.
 \end{align}
 Substituting \eqref{eq_BB.21} - \eqref{eq_BB.25} to \eqref{eq_BB.20},
 we obtain
 \begin{align}
 \nonumber
 \sup_{|z|<2\tau^\frac{1}{16}}
 \bar\Upsilon(\tau)
 &<
 \tfrac{C}{\tau^\frac{15}{8}}
 +
 \sup_{|z|<2\tau^\frac{1}{16}}
 \{
 \tfrac{C}{\tau^\frac{15}{8}}
 |p_0(z)|
 +
 \tfrac{C\log\tau}{\tau^2}
 |p_1(z)|
 \}
 +
 \tfrac{C}{\tau^2}
 \\
 \label{eq_BB.26}
 &<
 \tfrac{C\log\tau}{\tau^\frac{15}{8}},
 \\
 \nonumber
 \sup_{|z|<2\tau^\frac{1}{2}}
 \bar\Upsilon(\tau)
 &<
 \tfrac{C}{\tau}
 +
 \sup_{|z|<2\tau^\frac{1}{2}}
 \{
 \tfrac{C}{\tau}
 |p_0(z)|
 +
 \tfrac{C\log\tau}{\tau^2}
 |p_1(z)|
 \}
 +
 \tfrac{C}{\tau^2}
 \\
 \label{eq_BB.27}
 &
 <
 \tfrac{C\log\tau}{\tau}.
 \end{align}
 Since
 $\Upsilon(z,\tau)<\bar\Upsilon(z,\tau)$
 by a comparison argument,
 we conclude
 \begin{align}
 \label{eq_BB.28}
 \sup_{|z|<2\tau^\frac{1}{16}}
 |\Upsilon(z,\tau)|
 &<
 \tfrac{C\log\tau}{\tau^\frac{15}{8}},
 \\
 \label{eq_BB.29}
 \sup_{|z|<2\tau^\frac{1}{2}}
 |\Upsilon(z,\tau)|
 &<
 \tfrac{C\log\tau}{\tau}.
 \end{align}
 Since
 $\Phi^\bot$ can be written as
 \begin{align*}
 &|\Phi^\bot|
 =
 |(\Phi-\kappa)^\bot|
 \\
 &=
 |\Phi-\kappa-b_0H_0-{\bf b}_1\cdot{\bf H}_1-{\bf b}_2\cdot{\bf H}_2|
 \\
 &<
 \underbrace{
 |\Phi-g|
 }_{=|\Upsilon|}
 +
 |g-\kappa-{\bf b}_2\cdot{\bf H}_2|
 +
 \underbrace{
 |b_0H_0-{\bf b}_1\cdot{\bf H}_1|
 }_{<C\tau^{-2}(1+|z|)},
 \end{align*}
 we get from \eqref{eq_BB.29} that
 \begin{align}
 \label{eq_BB.30}
 \sup_{|z|<2\tau^\frac{1}{16}}
 |\Phi^\bot|
 &<
 \tfrac{C\log\tau}{\tau^\frac{15}{8}}
 +
 \sup_{|z|<2\tau^\frac{1}{16}}
 |g-\kappa-{\bf b}_2\cdot{\bf H}_2|
 +
 \tfrac{C}{\tau^\frac{31}{16}}.
 \end{align}
 From the definition of $g(z,\tau)$ (see \eqref{eq_BB.4}),
 we note that
 \begin{align}
 \nonumber
 |g-\kappa-{\bf b}_2\cdot{\bf H}_2|
 &=
 \kappa
 |
 (1-\tfrac{p-1}{\kappa}{\bf b}_2\cdot{\bf H}_2)^{-\frac{1}{p-1}}
 -1-\tfrac{1}{\kappa}{\bf b}_2\cdot{\bf H}_2
 |
 \\
 \label{eq_BB.31}
 &<
 C({\bf b}_2\cdot{\bf H}_2)^2
 <
 \tfrac{C}{\tau^\frac{7}{4}}
 \qquad
 \text{for }
 |z|<2\tau^\frac{1}{16}.
 \end{align}
 Therefore
 \eqref{eq_BB.30} - \eqref{eq_BB.31} prove (s5).
 We turn to the proof of (s3),
 which is a main part of this proposition.
 We begin with the estimates for  $\sqrt{3n}<|z|<2\sqrt\tau$.
 From \eqref{eq_BB.28}  - \eqref{eq_BB.29},
 we see that 
 \begin{align}
 \label{eq_BB.32}
 \Phi
 =
 \underbrace{
 \Phi-g
 }_{=\Upsilon}
 +
 g
 <
 \begin{cases}
 g
 +
 \tfrac{C}{\tau^\frac{7}{4}}
 & \text{for }
 |z|<2\tau^\frac{1}{16},
 \\
 g
 +
 \tfrac{C\log\tau}{\tau}
 & \text{for }
 |z|<2\sqrt\tau.
 \end{cases}
 \end{align}
 From \eqref{eq_BB.5},
 we note that ${\bf b}_2\cdot{\bf H}_2<0$ for $|z|>\sqrt{3n}$ and $\tau\gg1$.
 Hence
 we observe
 \begin{align}
 \nonumber
 g(z,\tau)
 &=
 \kappa(1-\tfrac{p-1}{\kappa}{\bf b}_2\cdot{\bf H}_2)^{-\frac{1}{p-1}}
 \\
 \label{eq_BB.33}
 &<
 \kappa
 +
 {\bf b}_2\cdot{\bf H}_2
 +
 \tfrac{p}{2\kappa}
 ({\bf b}_2\cdot{\bf H}_2)^2
 \qquad
 \text{for }
 |z|>\sqrt{3n}.
 \end{align}
 Therefore
 from \eqref{eq_BB.32} - \eqref{eq_BB.33} and \eqref{eq_BB.5},
 we obtain
 \begin{align}
 \nonumber
 \Phi
 &<
 \kappa
 +
 {\bf b}_2\cdot{\bf H}_2
 +
 \tfrac{p}{2\kappa}
 ({\bf b}_2\cdot{\bf H}_2)^2
 +
 \tfrac{C}{\tau^\frac{7}{4}}
 \\
 \nonumber
 &<
 \kappa
 +
 \tfrac{1}{2}
 {\bf b}_2\cdot{\bf H}_2
 +
 \tfrac{C}{\tau^\frac{7}{4}}
 <
 \kappa
 +
 \tfrac{1}{4}
 {\bf b}_2\cdot{\bf H}_2
 \\
 \nonumber
 &<
 \kappa
 +
 \tfrac{\kappa}{16p\tau}
 (-\tfrac{7}{8}|z|^2+\tfrac{17n}{8})
 \\
 \label{eq_BB.34}
 &<
 \kappa
 -
 \tfrac{\kappa}{16p\tau}
 \tfrac{4n}{8}
 \qquad
 \text{for }
 \sqrt{3n}<|z|<2\tau^\frac{1}{16}.
 \end{align}
 From \eqref{eq_BB.5},
 we can choose $\nu\in(0,1)$ such that
 $\frac{p}{2\kappa}|{\bf b_2}\cdot{\bf H}_2|<\tfrac{1}{4}$
 for $|z|<\sqrt{\nu\tau}$.
 Then 
 we have
 \begin{align}
 \nonumber
 \Phi
 &<
 \kappa
 +
 {\bf b}_2\cdot{\bf H}_2
 +
 \tfrac{p}{2\kappa}
 ({\bf b}_2\cdot{\bf H}_2)^2
 +
 \tfrac{C\log\tau}{\tau}
 \\
 \nonumber
 &<
 \kappa
 +
 \tfrac{3}{4}
 {\bf b}_2\cdot{\bf H}_2
 +
 \tfrac{C\log\tau}{\tau}
 \\
 \nonumber
 &<
 \kappa
 +
 \tfrac{3}{4}
 \tfrac{\kappa}{4p\tau}
 (-\tfrac{7}{8}|z|^2+\tfrac{17n}{8})
 +
 \tfrac{C\log\tau}{\tau}
 \\
 \nonumber
 &<
 \kappa
 +
 \tfrac{3}{4}
 \tfrac{\kappa}{4p\tau}
 \underbrace{
 (-\tfrac{7}{8}\tau^\frac{1}{8}+\tfrac{17n}{8}+C\log\tau)
 }_{<-\tfrac{6}{8}\tau^\frac{1}{8}}
 \\
 \label{eq_BB.35}
 &<
 \kappa
 -
 \tfrac{9}{16}
 \tfrac{\kappa}{4p}
 \tau^{-\frac{7}{8}}
 \qquad
 \text{for }
 \tau^\frac{1}{16}<|z|<\sqrt{\nu\tau}.
 \end{align}
 Furthermore
 we note from \eqref{eq_BB.5} that
 \begin{align*}
 \nonumber
 g(z,\tau)
 &=
 \kappa(1-\tfrac{p-1}{\kappa}{\bf b}_2\cdot{\bf H}_2)^{-\frac{1}{p-1}}
 \\
 \nonumber
 &<
 \kappa
 \{
 1+\tfrac{\kappa}{4p\tau}(\tfrac{7}{8}|z|^2-\tfrac{17}{8})
 \}^{-\frac{1}{p-1}}
 \\
 \nonumber
 &<
 \kappa
 \{
 1+\tfrac{\kappa}{4p}(\tfrac{7\nu}{8}-\tfrac{17}{8\tau})
 \}^{-\frac{1}{p-1}}
 \}^{-\frac{1}{p-1}}
 \\
 &<
 \kappa
 \{
 1+\tfrac{\kappa}{4p}\tfrac{3\nu}{4}
 \}^{-\frac{1}{p-1}}
 \qquad
 \text{for }
 |z|>\sqrt{\nu\tau}.
 \end{align*}
 Hence
 \eqref{eq_BB.32} implies
 \begin{align}
 \nonumber
 \Phi
 &<
 g
 +
 \tfrac{C\log\tau}{\tau}
 \\
 \nonumber
 &<
 \kappa
 \{
 1+\tfrac{3\nu\kappa}{16p}
 \}^{-\frac{1}{p-1}}
 +
 \tfrac{C\log\tau}{\tau}
 \\
 \label{eq_BB.36}
 &<
 \kappa
 \{
 1+\tfrac{3\nu\kappa}{32p}
 \}^{-\frac{1}{p-1}}
 \qquad
 \text{for }
 \sqrt{\nu\tau}<|z|<2\sqrt\tau.
 \end{align}
 We next provide estimates for $|z|>2\sqrt\tau$.
 It is proved in \cite{Velazquez_higher} p. 1589 - 1590 that
 a rescaled function
 \begin{align*}
 \Theta(\omega,\zeta,s)
 =
 (T-s)^{\frac{1}{p-1}}
 \varphi(\sqrt{(T-s)|\log(T-s)|}\omega,s+\zeta(T-s))
 \end{align*}
 behaves like
 \begin{align}
 \label{eq_BB.37}
 \lim_{s\to T}
 \sup_{\omega\in S^{n-1}}
 \sup_{\zeta\in(0,1)}
 |
 \Theta(\omega,\zeta,s)
 -
 \kappa
 (
 1-\zeta
 +
 \tfrac{p-1}{4p}
 )^{-\frac{1}{p-1}}
 |
 =
 0.
 \end{align}
 We fix $\eta\in(0,1)$ such that
 \begin{align*}
 \kappa
 (
 1-\zeta
 +
 \tfrac{p-1}{4p}
 )^{-\frac{1}{p-1}}
 +
 \eta
 <
 \kappa
 (
 1-\zeta
 +
 \tfrac{p-1}{8p}
 )^{-\frac{1}{p-1}}
 \qquad
 \text{for }
 \zeta\in(0,1).
 \end{align*}
 From \eqref{eq_BB.37},
 there exists  $s_1\in(0,T)$ such that
 \begin{align*}
 \sup_{\omega\in S^{n-1}}
 &\Theta(\omega,\zeta,s)
 <
 \kappa
 (
 1-\zeta+\tfrac{p-1}{4p}
 )^{-\frac{1}{p-1}}
 +
 \eta
 \\
 &<
 \kappa
 (
 1-\zeta+\tfrac{p-1}{8p}
 )^{-\frac{1}{p-1}}
 \qquad
 \text{for }
 \zeta\in(0,1)
 \text{ and }
 s\in(s_1,T).
 \end{align*}
 We rewrite this inequality in the original variable $(x,t)$.
 Put $t=s+\zeta(T-s)$.
 If $s\in(s_1,T)$,
 then it holds that
 \begin{align}
 \label{eq_BB.38}
 \sup_{\omega\in S^{n-1}}
 &\varphi(\sqrt{(T-s)|\log(T-s)|}\omega,t)
 \\
 \nonumber
 &\quad
 <
 \kappa
 (T-t)^{-\frac{1}{p-1}}
 (
 1+\tfrac{p-1}{8p}\tfrac{T-s}{T-t}
 )^{-\frac{1}{p-1}}
 \qquad \text{for }
 t\in(s,T).
 \end{align}
 We apply a comparison argument in the region $U(s)$ defined by
 \begin{align*}
 U(s)
 =
 \{(x,t)&\in\R^n\times(s,T);
 \sqrt{(T-t)|\log(T-t)|}
 \\
 \nonumber
 &<|x|<\sqrt{(T-s)|\log(T-s)|}
 \}.
 \end{align*}
 We now take $\zeta=0$ in \eqref{eq_BB.38}.
 Since $t|_{\zeta=0}=\{s+\zeta(T-s)\}|_{\zeta=0}=s$,
 we get
 \begin{align}
 \label{eq_BB.39}
 \sup_{\omega\in S^{n-1}}
 \varphi(&\sqrt{(T-t)|\log(T-t)|}\omega,t)
 \\
 \nonumber
 &\qquad
 <
 \kappa
 (T-t)^{-\frac{1}{p-1}}
 (
 1+\tfrac{p-1}{8p}
 )^{-\frac{1}{p-1}}
 \qquad
 \text{for }
 t\in(s_1,T).
 \end{align}
 It is easily checked that
 $\varphi_1(x,t)=\kappa(T-t)^{-\frac{1}{p-1}}(1+\tfrac{(p-1)}{16p})^{-\frac{1}{p-1}}$
 satisfies
 $\pa_{\tau}\varphi_1>\Delta\varphi_1+\varphi_1^p$.
 Hence
 from \eqref{eq_BB.38} - \eqref{eq_BB.39},
 a comparison argument shows
 \begin{align}
 \label{eq_BB.40}
 \varphi(x,t)
 <
 \kappa
 (T-t)^{-\frac{1}{p-1}}
 (1+\tfrac{(p-1)}{16p})^{-\frac{1}{p-1}}
 \qquad
 \text{for }
 (x,t)\in U(s_1).
 \end{align}
 From (a2),
 we can choose $t_1\in(s_1,T)$ such that
 \begin{align}
 \label{eq_BB.41}
 \sup_{|x|>\sqrt{(T-s_1)|\log(T-s_1)|}}
 \varphi(x,t)
 &<
 \kappa
 (T-t)^{-\frac{1}{p-1}}
 (1+\tfrac{(p-1)}{16p})^{-\frac{1}{p-1}}
 \end{align}
 for $t\in(t_1,T)$.
 Estimates \eqref{eq_BB.40} - \eqref{eq_BB.41}
 imply
 \begin{align}
 \label{eq_BB.42}
 \sup_{|z|>\sqrt\tau}
 \Phi(z,\tau)
 &<
 \kappa
 (1+\tfrac{(p-1)}{16p})^{-\frac{1}{p-1}}
 \qquad
 \text{for }
 \tau>-\log(T-t_1).
 \end{align}
 Combining \eqref{eq_BB.42} and \eqref{eq_BB.34} - \eqref{eq_BB.36},
 we conclude (s3).
 Te right hand inequality in (s2) is immediately obtained from (s3).
 Assumption (a3) directly implies the left hand inequality in (s2).
 The proof is completed.
 \end{proof}

 \section{Continuity of the blowup time.}
 \begin{lem}
 \label{lemma_C}
 Assume $n\geq1$ and $p>1$.
 Let $\varphi(x,t)$ be a solution of
 $\varphi_t=\Delta_x\varphi+|\varphi|^{p-1}\varphi$
 satisfying
 \begin{enumerate}[\rm(q1)]
 \setlength{\leftskip}{5mm}
 \item $\varphi(x,t)$ blows up in a finite time $t=T$,
 \item $\varphi(x,t)>-C_1$ \quad {\rm for} $(x,t)\in\R^n\times(0,T)$,
 \item $\dis\sup_{x\in\R^n}\|u(x,t)\|_{L_x^\infty(\R^n)}<C_2(T-t)^{-\frac{1}{p-1}}$
 \quad {\rm for} $t\in(0,T)$,
 \item a rescaled function
 $\Phi(z,\tau)=e^{-\frac{\tau}{p-1}}\varphi(z\sqrt{T-t},T-e^{-\tau})$
 satisfies
 \begin{align*}
 \lim_{\tau\to\infty}\|\Phi(z,\tau)-\kappa\|_\rho=0.
 \end{align*}
 \end{enumerate}
 Let $u(x,t)$ be a solution of $u_t=\Delta_xu+|u|^{p-1}u$ with $u|_{t=0}=u_0$,
 and $T(u)$ be the blowup time of $u(x,t)$.
 For any $\epsilon>0$
 there exists $\delta_1\in(0,1)$
 such that if $\|u_0-\varphi_0\|_{L_x^\infty(\R^n)}<\delta_1$,
 then
 $|T(u)-T|<\epsilon$.
 \end{lem}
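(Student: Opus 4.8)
The plan is to prove the two one-sided bounds $T(u)>T-\epsilon$ and $T(u)<T+\epsilon$ separately. The first, easy, direction: since $\varphi$ blows up only at $t=T$, it is bounded on $\R^n\times[0,T-\tfrac\epsilon2]$, so continuous dependence on the initial data for the semilinear heat equation (a Gronwall estimate on the Duhamel formula) yields $\delta_1>0$ such that $\|u_0-\varphi_0\|_{L^\infty}<\delta_1$ forces $u$ to exist on $[0,T-\tfrac\epsilon2]$ with $\sup_{t\le T-\epsilon/2}\|u(\cdot,t)-\varphi(\cdot,t)\|_{L^\infty}<1$; in particular $T(u)>T-\tfrac\epsilon2>T-\epsilon$. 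This closeness on fixed subintervals of $[0,T)$ will be reused below.

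For the upper bound I would invoke Proposition \ref{proposition_4.2}, applied to the rescaling of $u$ centred at the target time $S:=T+\epsilon$. Put $t_2:=T-2\epsilon$ and $w(z,\tau)=e^{-\tau/(p-1)}u(e^{-\tau/2}z,S-e^{-\tau})$, a solution of \eqref{eq_4.5} wherever $u$ exists. The lower bound \eqref{eq_4.6}--\eqref{eq_4.7} holds on a starting interval because (q2) gives $u(\cdot,t)>-C_1-1$ whenever $u$ is $L^\infty$-close to $\varphi$, the quantity $S-t$ is small there, and $-e^{-3\tau/(4(p-1))}$ is a subsolution of \eqref{eq_4.5} for large $\tau$ (as noted in the text preceding Proposition \ref{proposition_4.2}); one takes the starting time $\tau_0$ slightly below $\tau_1$ defined next, with $\tau_0\ge\tau_*$ automatic for $\epsilon$ small. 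The crux is to verify \eqref{eq_4.8} at $\tau_1:=-\log(S-t_2)=-\log(3\epsilon)$, which is $\ge\tau_*$ once $\epsilon$ is small. Since $w(\cdot,\tau_1)=(S-t_2)^{1/(p-1)}u(\sqrt{S-t_2}\,\cdot\,,t_2)$ and $H_0={\sf k}_0^n$ is constant,
\[
\langle w(\tau_1)-\kappa,H_0\rangle_\rho={\sf k}_0^n\Bigl((S-t_2)^{\frac{1}{p-1}}\!\int_{\R^n}\!u(\sqrt{S-t_2}\,z,t_2)\,\rho(z)\,dz-\kappa\!\int_{\R^n}\!\rho\Bigr),
\]
and replacing $u(\cdot,t_2)$ by $\varphi(\cdot,t_2)$ (an $L^\infty$-error vanishing as $\delta\to0$) and expressing $\varphi(\cdot,t_2)$ through its rescaling around $T$ turns the first integral into $\lambda^{2/(p-1)}\int_{\R^n}\Phi(\lambda z,\sigma_1)\rho(z)\,dz$ with $\lambda=\sqrt{(S-t_2)/(T-t_2)}=\sqrt{3/2}$ and $\sigma_1=-\log(T-t_2)=-\log(2\epsilon)$.

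The step I expect to be the real obstacle is to show $\int_{\R^n}\Phi(\lambda z,\sigma_1)\rho(z)\,dz\to\kappa\int_{\R^n}\rho$ as $\epsilon\to0$, i.e. that the $L^2_\rho$-convergence $\Phi(\cdot,\sigma)\to\kappa$ from (q4) is not destroyed by the dilation $z\mapsto\lambda z$ in the Gaussian weight (nontrivial since $\lambda>1$). This follows by splitting at a large radius $R$: on $\{|z|>R\}$ the uniform bound $\|\Phi(\cdot,\sigma)\|_{L^\infty}<C_2$ from (q3) makes the tail $O\bigl((C_2+\kappa)\int_{|z|>R}\rho\bigr)$, while on $\{|z|<R\}$ the substitution $y=\lambda z$ replaces $e^{-|z|^2/4}$ by $\rho(y)\,e^{|y|^2(1-\lambda^{-2})/4}\le e^{\lambda^2R^2/4}\rho(y)$, so that part is $\le e^{CR^2}\|\Phi(\cdot,\sigma_1)-\kappa\|_{L^2_\rho}\to0$; choosing $R$ first and then $\epsilon$ small finishes it. Hence $\langle w(\tau_1)-\kappa,H_0\rangle_\rho={\sf k}_0^n\kappa\bigl(\int\rho\bigr)\bigl(\lambda^{2/(p-1)}-1\bigr)+o(1)$, a fixed positive number, whereas the right side of \eqref{eq_4.8} equals $\tfrac{4p\kappa^{p-1}}{{\sf k}_0^n}(3\epsilon)^{3/(4(p-1))}\to0$; so \eqref{eq_4.8} holds for $\epsilon$ and then $\delta$ small. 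Proposition \ref{proposition_4.2} then produces $\tau_{\sf b}<\infty$ with $\limsup_{\tau\to\tau_{\sf b}}\|w(\tau)\|_{L^\infty}=\infty$, which means $u$ blows up at $t=S-e^{-\tau_{\sf b}}<S=T+\epsilon$, i.e. $T(u)<T+\epsilon$. Combining the two bounds and relabelling $\epsilon$ gives $|T(u)-T|<\epsilon$.
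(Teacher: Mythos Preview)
Your proof is correct and follows essentially the same strategy as the paper's: continuous dependence for $T(u)>T-\epsilon$, and Proposition~\ref{proposition_4.2} applied to the rescaling of $u$ centred at a time $S>T$ for $T(u)<T+\epsilon$, with the key positivity \eqref{eq_4.8} coming from the fixed ratio $(S-t_2)/(T-t_2)>1$ between the two time scales. The only notable difference is that the paper upgrades the $L^2_\rho$ convergence of $\Phi$ to pointwise convergence on balls (via \eqref{eq_C.1}) before computing $\langle w-\kappa,H_0\rangle_\rho$, whereas your dilation/Cauchy--Schwarz argument works directly with the $L^2_\rho$ hypothesis and is a clean way around that step.
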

 \begin{proof}
 From a standard argument,
 we easily see that
 for any $\eta\in(0,1)$ there exists $h_1\in(0,1)$
 such that if $\|u_0-\varphi_0\|_{L_x^\infty(\R^n)}<h_1$,
 then
 $T(u)>T-\eta$.
 Hence
 it is sufficient to show the continuity of $T(u)$ from above.
 Let $\eta\in(0,1)$ and set
 \begin{align*}
 \Phi(z,\tau;\eta)
 =
 e^{-\frac{\tau}{p-1}}
 \varphi(e^{-\frac{\tau}{2}}z,T+\eta-e^{-\tau}).
 \end{align*}
 This function is defined on
 $\tau\in(-\log(T+\eta),-\log\eta)$.
 Define
 $s=\tilde s(\tau,\eta)$ by
 \begin{align*}
 T+\eta
 -
 e^{-\tau}
 =
 T
 -
 e^{-\tilde s}
 &
 \Leftrightarrow
 \quad
 e^{-\tilde s}=-\eta+e^{-\tau}
 \\
 &
 \Leftrightarrow
 \quad
 e^\tau=\tfrac{e^{\tilde s}}{1+\eta e^{\tilde s}}.
 \end{align*}
 From (q3) - (q4),
 for any $R>1$ and $\epsilon_1\in(0,1)$,
 there exists $\sigma_1\in(-\log T,\infty)$ such that
 \begin{align}
 \label{eq_C.1}
 |\varphi(x,t)-\kappa&(T-t)^{-\frac{1}{p-1}}|
 <
 \epsilon_1
 (T-t)^{-\frac{1}{p-1}}
 \\
 \nonumber
 &
 \text{for }
 |x|<R\sqrt{T-t}
 \text{ and }
 t\in(T-e^{-\sigma_1},T).
 \end{align}
 For $\sigma\in(\sigma_1,\infty)$,
 we define $\eta(\sigma)$, $\tau(\sigma)$ and $s(\sigma)$ by
 \begin{itemize}
 \item 
 $\eta(\sigma)=\frac{1}{4}e^{-\sigma}$,
 \item
 $e^{\tau(\sigma)}=\frac{4}{5}e^{\sigma}$,
 \item
 $s(\sigma)=\sigma$.
 \end{itemize}
 From this relation,
 it holds that
 \begin{itemize}
 \item
 $\tilde s(\tau(\sigma),\eta(\sigma))=\sigma=s(\sigma)$.
 \end{itemize}
 From \eqref{eq_C.1},
 we see that
 for $\sigma\in(\sigma_1,\infty)$
 \begin{align*}
 |
 \Phi(z&,\tau(\sigma);\eta(\sigma))
 -
 \kappa
 e^{-\frac{\tau(\sigma)}{p-1}}
 e^\frac{s(\sigma)}{p-1}
 |
 \\
 &=
 |
 e^{-\frac{\tau(\sigma)}{p-1}}
 \varphi(e^{-\frac{\tau(\sigma)}{2}}z,T+\eta(\sigma)-e^{-\tau(\sigma)})
 -
 \kappa
 e^{-\frac{\tau(\sigma)}{p-1}}
 e^\frac{s(\sigma)}{p-1}
 \\
 &=
 |
 e^{-\frac{\tau(\sigma)}{p-1}}
 \varphi(e^{-\frac{\tau(\sigma)}{2}}z,T-e^{-s(\sigma)})
 -
 \kappa
 e^{-\frac{\tau(\sigma)}{p-1}}
 e^\frac{s(\sigma)}{p-1}
 |
 \\
 &=
 e^{-\frac{\tau(\sigma)}{p-1}}
 |
 \varphi(e^{-\frac{\tau(\sigma)}{2}}z,T-e^{-s(\sigma)})
 -
 \kappa
 e^\frac{s(\sigma)}{p-1}
 |
 \\
 &=
 e^{-\frac{\tau(\sigma)}{p-1}}
 |
 \varphi(e^{-\frac{\tau(\sigma)}{2}}z,T-e^{-\sigma})
 -
 \kappa
 e^\frac{\sigma}{p-1}
 |
 \\
 &<
 \epsilon_1
 e^{-\frac{\tau(\sigma)}{p-1}}
 e^\frac{\sigma}{p-1}
 \qquad
 \text{for }
 |e^{-\frac{\tau(\sigma)}{2}}z|
 <
 Re^{-\frac{\sigma}{2}}.
 \end{align*}
 Since $e^{\tau(\sigma)}e^{-s(\sigma)}=e^{\tau(\sigma)}e^{-\sigma}=\frac{4}{5}$,
 it follows that
 for
 $\sigma\in(\sigma_1,\infty)$ 
 \begin{align*}
 |
 \Phi(z,\tau(\sigma);\eta(\sigma))
 -
 \kappa
 (\tfrac{5}{4})^{\frac{1}{p-1}}|
 &<
 \epsilon_1
 (\tfrac{5}{4})^{\frac{1}{p-1}}
 \qquad
 \text{for }
 |z|
 <
 R\sqrt{\tfrac{4}{5}}.
 \end{align*}
 Therefore
 if we choose $\epsilon_1\in(0,1)$ small enough,
 then
 it holds that
 for
 $\sigma\in(\sigma_1,\infty)$ 
 \begin{align}
 \label{eq_C.2}
 \Phi(z,\tau(\sigma);\eta(\sigma))
 >
 (1+\tfrac{\theta}{2})
 \kappa
 \qquad
 \text{for }
 |z|
 <
 R\sqrt{\tfrac{4}{5}},
 \end{align}
 here $\theta=(\tfrac{5}{4})^\frac{1}{p-1}-1>0$.
 We define a rescaled function $w(z,\tau)$ by
 \begin{align*}
 w(z,\tau;\sigma)
 =
 e^{-\frac{\tau}{p-1}}
 u(ze^{-\frac{\tau}{2}},T+\eta(\sigma)-e^{-\tau}).
 \end{align*}
 The function $w(z,\tau;\sigma)$ is defined on
 $\tau
 \in
 (-\log(T+\eta(\sigma)),\tau_\infty(\sigma))$,
 where
 \begin{align*}
 \tau_\infty(\sigma)
 =
 \begin{cases}
 -\log(T+\eta(\sigma)-T(u)) & \text{if } T+\eta(\sigma)-T(u)>0,
 \\
 \infty & \text{if } T+\eta(\sigma)-T(u)=0,
 \\
 \infty & \text{if } T+\eta(\sigma)-T(u)<0.
 \end{cases}
 \end{align*}
 We easily see that
 \begin{align*}
 &
 \sup_{z\in\R^n}
 |
 w(z,\tau;\sigma)
 -
 \Phi(z,\tau;\eta(\sigma))
 |
 \\
 &=
 e^{-\frac{\tau}{p-1}}
 \sup_{z\in\R^n}
 |
 u(ze^{-\frac{\tau}{2}},T+\eta(\sigma)-e^{-\tau})
 -
 \varphi(ze^{-\frac{\tau}{2}},T+\eta(\sigma)-e^{-\tau})
 |
 \\
 &=
 e^{-\frac{\tau}{p-1}}
 \sup_{x\in\R^n}
 |
 u(x,t)
 -
 \varphi(x,t)
 |
 \qquad
 \text{with }
 t=T+\eta(\sigma)-e^{-\tau}.
 \end{align*}
 From the definition of $\tau(\sigma),\eta(\sigma)$, 
 we note that
 $T+\eta(\sigma)-e^{-\tau(\sigma)}=T-e^{-\sigma}$.
 By the continuous dependence of solutions on the initial data,
 for any fixed $\sigma\in(\sigma_1,\infty)$ and any $\epsilon\in(0,1)$,
 there exists $\delta_1=\delta_1(\sigma)\in(0,1)$ such that
 if $\|u_0-\varphi_0\|_{L_x^\infty(\R~n)}<\delta_1$,
 then
 it holds that
 \begin{align}
 \nonumber
 &
 \sup_{z\in\R^n}|
 w(z,\tau(\sigma);\sigma)
 -
 \Phi(z,\tau(\sigma);\eta(\sigma))
 |
 \\
 \label{eq_C.3}
 &<
 e^{-\frac{\tau(\sigma)}{p-1}}
 \sup_{x\in\R^n}
 |
 u(x,T-e^{-\sigma})
 -
 \varphi(x,T-e^{-\sigma})
 |
 <
 \epsilon.
 \end{align}
 Combining \eqref{eq_C.2} - \eqref{eq_C.3},
 we deduce that
 \begin{align}
 \label{eq_C.4}
 w(z,\tau(\sigma);\sigma)
 >
 (1+\tfrac{\theta}{4})
 \kappa
 \qquad
 \text{for }
 |z|
 <
 R\sqrt{\tfrac{4}{5}}.
 \end{align}
 Furthermore
 from (q2) - (q3),
 it holds that
 \begin{align}
 \nonumber
 |\Phi(z,\tau(\sigma);\eta(\sigma))|
 &=
 |e^{-\frac{\tau(\sigma)}{p-1}}
 \varphi(e^{-\frac{\tau(\sigma)}{2}}z,T+\eta(\sigma)-e^{-\tau(\sigma)})|
 \\
 \nonumber
 &=
 e^{-\frac{\tau(\sigma)}{p-1}}
 |\varphi(e^{-\frac{\tau(\sigma)}{2}}z,T-e^{-\sigma})|
 \\
 \label{eq_C.5}
 &<
 C_2
 e^{-\frac{\tau(\sigma)}{p-1}}
 e^\frac{\sigma}{p-1}
 =
 C_2
 (\tfrac{5}{4})^\frac{1}{p-1}
 \qquad
 \text{for }
 z\in\R^n.
 \end{align}
 Hence
 from \eqref{eq_C.5} and \eqref{eq_C.3},
 we have
 \begin{align}
 \label{eq_C.6}
 \sup_{z\in\R^n}
 |w(z,\tau(\sigma);\sigma)|
 <
 2C_2
 (\tfrac{5}{4})^\frac{1}{p-1}.
 \end{align}
 Using \eqref{eq_C.4} and \eqref{eq_C.6},
 we can verify that
 \begin{align*}
 \langle w(z,\tau(\sigma)&;\sigma)-\kappa,H_0\rangle_\rho
 =
 \langle
 \{w(z,\tau(\sigma);\sigma)-\kappa\}{\bf 1}_{|z|<R\sqrt{\frac{4}{5}}},H_0
 \rangle_\rho
 \\
 &\quad
 +
 \langle \{w(z,\tau(\sigma);\sigma)-\kappa\}{\bf 1}_{|z>R\sqrt{\frac{4}{5}}},H_0\rangle_\rho
 \\
 &>
 \tfrac{\theta\kappa}{4}
 \langle {\bf 1}_{|z|<R\sqrt{\frac{4}{5}}},H_0\rangle_\rho
 -
 2C_2
 (\tfrac{5}{4})^\frac{1}{p-1}
 \langle {\bf 1}_{|z>R\sqrt{\frac{4}{5}}},H_0\rangle_\rho.
 \end{align*}
 We recall that $\theta$ is a positive constant independent of $R$.
 Therefore
 if $R$ is large enough,
 then
 it holds that
 \begin{align*}
 \langle w(z,\tau(\sigma);\sigma)-\kappa,H_0\rangle_\rho
 &>
 \tfrac{\theta\kappa}{8H_0}.
 \end{align*}
 Furthermore
 from the definition of $w(z,\tau;\sigma)$ and (q2),
 it is clear that
 \begin{align*}
 w(z,\tau;\sigma)
 &=
 e^{-\frac{\tau}{p-1}}
 u(ze^{-\frac{\tau}{2}},T+\eta(\sigma)-e^{-\tau})
 \\
 &>
 -
 C_1
 e^{-\frac{\tau}{p-1}}
 \qquad
 \text{for }
 \tau\in(-\log(T+\eta(\sigma)),\tau_\infty(\sigma)).
 \end{align*}
 Therefore
 from Proposition \ref{proposition_4.2},
 $w(z,\tau;\sigma)$ blows up in a finite time $\tau=\tau_\text{b}<\infty$,
 which implies that $\tau_\infty(\sigma)<\infty$.
 This shows $T(u)<T+\eta(\sigma)=T+\frac{1}{4}e^{-\sigma}$.
 Since we can choose $\sigma\in(\sigma_1,\infty)$ arbitrary large in this argument,
 we complete the proof.
 \end{proof}

\section*{Acknowledgement}
The author is partly supported by JSPS KAKENHI Grant Number 23K03161.

\section*{Data availability}
Data sharing is not applicable to this article
as no datasets were generated or analyzed during the current study.

\section*{Conflict of interest}
The author declares that he has no conflict of interest.

 
\end{document}